\DeclareMathOperator{\trace}{trace}
\DeclareMathOperator{\Ad}{Ad}
\newcommand{\bbar}{\begin{pmatrix}}
\newcommand{\ebar}{\end{pmatrix}}
\newcommand{\bdm}{\begin{displaymath}}
\newcommand{\edm}{\end{displaymath}}
\newcommand{\beq}{\begin{equation}}
\newcommand{\beqa}{\begin{eqnarray}}
\newcommand{\beqas}{\begin{eqnarray*}}
\newcommand{\eeq}{\end{equation}}
\newcommand{\eeqa}{\end{eqnarray}}
\newcommand{\eeqas}{\end{eqnarray*}}
\newcommand{\dd}{\textup{d}}
\newcommand{\C}{{\mathbb C}}
\newcommand{\real}{{\mathbb R}}
\newcommand{\SSS}{{\mathbb S}}
\newcommand{\Z}{{\mathbb Z}}
   \newtheorem{theorem}{Theorem}[section]
   \newtheorem{proposition}[theorem]{Proposition}
   \newtheorem{lemma}[theorem]{Lemma}
 \theoremstyle{remark}
   \newtheorem{example}[theorem]{Example}
\numberwithin{equation}{section}
\begin{document}

\title{Spherical Surfaces}
\author{David Brander}
\address{Department of Applied Mathematics and Computer Science\\ Matematiktorvet, Building 303 B\\
Technical University of Denmark\\
DK-2800 Kgs. Lyngby\\ Denmark}
\email{dbra@dtu.dk}

\keywords{Differential geometry, integrable systems, loop groups, spherical surfaces, constant Gauss curvature, singularities, Cauchy problem}
\subjclass[2000]{Primary 53A05, 53C43; Secondary 53C42, 57R45}

\begin{abstract}
We study surfaces of constant positive Gauss curvature  in Euclidean 3-space via the harmonicity of the Gauss map.  Using the loop group representation, we solve the regular and the singular geometric Cauchy problems for these surfaces, and use these solutions to compute several new examples.  We  give the criteria on the geometric Cauchy data for the generic singularities, as well as for the cuspidal beaks and cuspidal butterfly singularities. We consider the bifurcations of generic one parameter families of spherical fronts and
provide evidence that suggests that these are the cuspidal beaks, cuspidal butterfly and one
other singularity.  We also give the loop group
potentials for spherical surfaces with finite order rotational symmetries and for surfaces with embedded
isolated singularities.
\end{abstract}
\maketitle

\section{Introduction}
\subsection*{Motivation and goals}
A \emph{spherical surface} is defined to be an immersed surface in Euclidean $3$-space, with positive constant 
induced Gauss curvature $K>0$.  Since the theory of such surfaces is
essentially the same for any positive constant $K$, we will use this term for the case $K=1$. 
It is well known  that the Gauss map of such an immersion is harmonic with respect to the
metric induced by the second fundamental form; 
 conversely, harmonic maps from a Riemann surface into
$\SSS^2$ correspond to spherical surfaces (with singularities).     The existence of the
holomorphic \emph{Hopf} quadratic differential, which vanishes precisely at umbilics, implies that the
only \emph{complete} spherical surface is necessarily a round sphere; thus any non-trivial global theory 
of spherical surfaces leads inevitably to the consideration of singularities, either as natural boundaries
for smooth surfaces or as part of a generalized surface.  There is a well-developed theory of harmonic maps into $\SSS^2$ that can be used to generate solutions (with natural singularities), and we will therefore work with these generalized spherical surfaces,  and call them \emph{spherical frontals}.

The purpose of this work is two-fold: firstly, although spherical surfaces are a classical
topic in differential geometry, there appears to be a dearth of concrete examples and
visualizations of them in the literature,
especially when compared with constant \emph{negative} curvature surfaces and flat surfaces.
The best known classical spherical surfaces are probably the surfaces of revolution, helical surfaces, Enneper's surfaces of constant positive curvature and the Sievert-Enneper surface.
Physical models of some of these can, at the time of writing, be found at 
\href{http://modellsammlung.uni-goettingen.de/}{http://modellsammlung.uni-goettingen.de/}.
A survey on the classical examples, with references, is the section by H.~Reckziegel in \cite{fischer}. 
Classically, the explanation for the lack of visualizations would have been the nonlinear nature of the problem; but
recently methods have been available to compute the solutions using implementations
of loop group theory, as has been done with non-minimal constant mean curvature (CMC) surfaces 
\cite{DorPW, KMS, kss2004, hellerschmitt}.
Spherical surfaces are obtained as parallel surfaces to CMC surfaces; however the geometric
relationship is not particularly intuitive (Figure \ref{figure0}). For constructing interesting examples it is 
more appropriate to study spherical surfaces directly.   

The other motivation for this project is to provide the ``elliptic complement'' to recent work \cite{dbms1, singps} on the geometric Cauchy problem
and its applications to the study of singularities of constant \emph{negative} 
curvature surfaces. The geometric
Cauchy problem (GCP) for a class of surfaces is, broadly speaking, to provide \emph{geometric} data, along a given curve in the ambient space, that is sufficient to define a unique solution surface of the geometry in question -- preferably with a means of constructing the solution. This problem has been solved for CMC surfaces in
\cite{bjorling}, where, as with Bj\"orling's classical problem, a prescribed curve and surface normal
along the curve are sufficient to construct the solution.  This does not solve the GCP for spherical surfaces, however
the underlying idea (see Section \ref{bjorlingsection} below) can be adapted for this case, as indeed 
it can to any other geometric problem characterized by a harmonic Gauss map.   
 G\'alvez, Hauswirth and Mira \cite{ghm} have treated the GCP for spherical surfaces, but not
from a loop group point of view.  They use the GCP to study
\emph{isolated} singularities, meaning cone points and branch points.
They focus on isolated singularities because they regard the surface as a point set in $\real^3$, 
rather than an immersion from a manifold into $\real^3$. From that point of view, 
they consider surfaces with isolated singularities to be the closest to complete smooth surfaces.

\subsection*{Organization and results of this article}
In Section \ref{background} we sketch the necessary definitions and results on loop group 
methods for spherical surfaces. To illustrate the method we give, in 
Section \ref{symmetriessection} the potentials for spherical surfaces with finite order rotational
symmetries and compute a number of examples (Figures \ref{figureB}, \ref{figureA}).
We also discuss regularity and the criteria for branch points.

		\begin{figure}[ht]
\centering
$
\begin{array}{ccc}
\includegraphics[height=33mm]{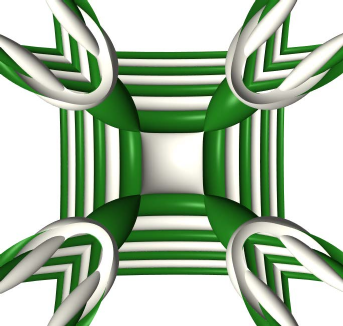}  \,\, &  \,\,
\includegraphics[height=33mm]{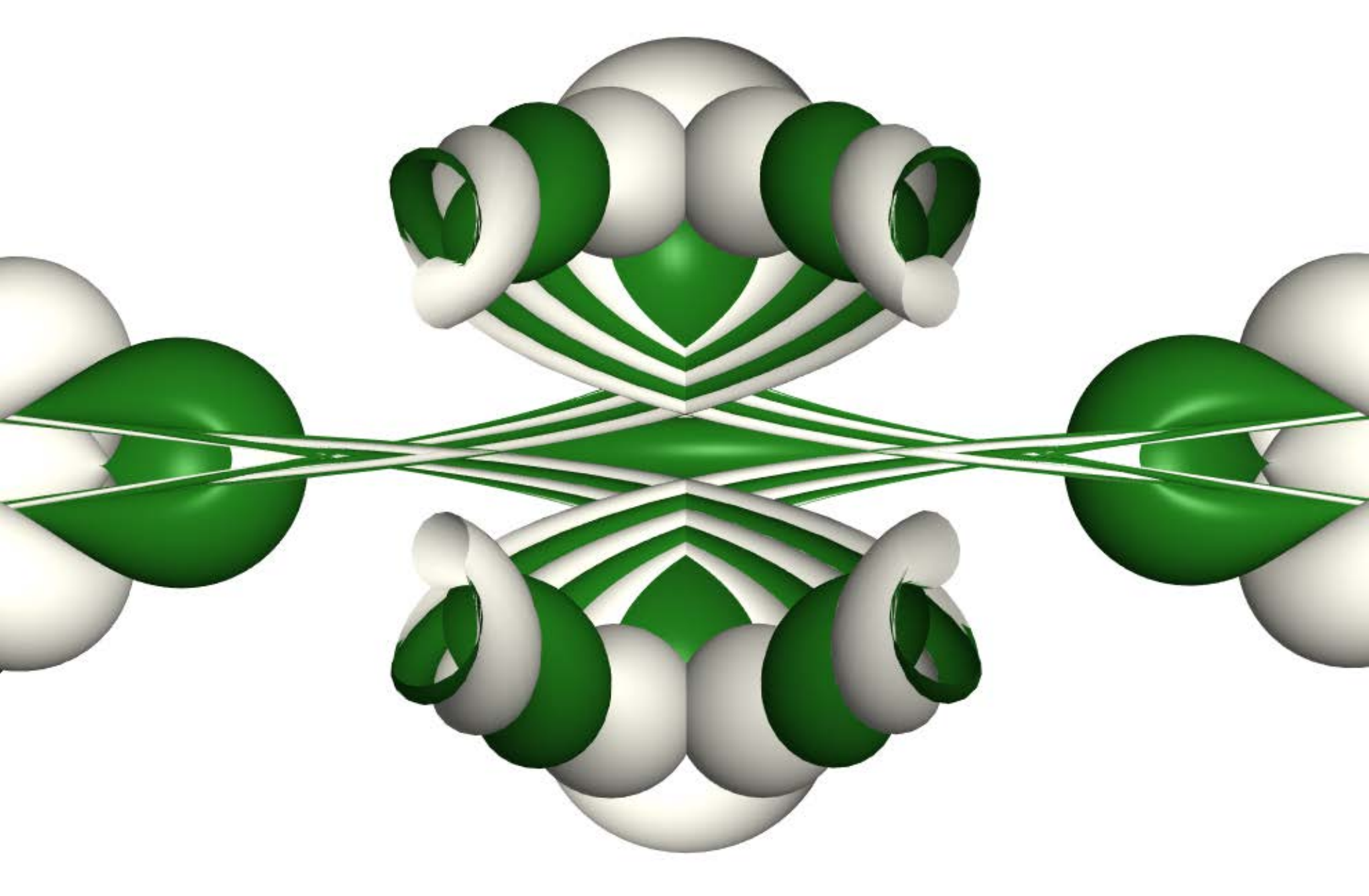}  \,\, &  \,\,
\includegraphics[height=33mm]{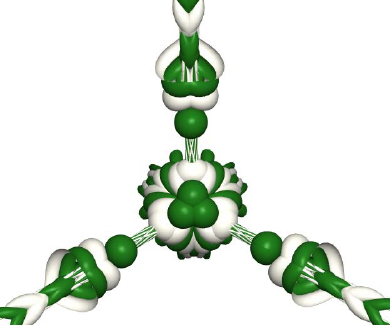}   \\
(\cos z^2 ,\sin  z^2  ) & (\cos z^2 , \frac{d^2}{\dd z^2}\sin z^2)  & (\cos z^3 , \frac{d^2}{\dd z^2}\sin z^3 ) 
\end{array}
$
\caption{ Spherical surfaces with potential data  $(a(z),b(z))$ (Theorem \ref{symmetrythm}).}
\label{figureB}
\end{figure}

In Section \ref{gcpsection} we give the solutions for the general geometric Cauchy problem 
(Theorem \ref{thm1}) and the \emph{singular} GCP (Theorems \ref{sgcpthm1} and \ref{thmgeneral}).
Theorem \ref{thm1} gives the holomorphic potentials for the solution spherical surface
containing a prescribed curve with surface normal prescribed along the curve. 

For the singular problems, in Theorem \ref{sgcpthm1}, the prescribed data is a regular real
analytic curve, and the solution is  the unique spherical frontal containing this curve as a cuspidal edge.
We also prove (Theorem \ref{beaksthm}) that, at a point where the curvature of the curve 
vanishes to first order, a \emph{cuspidal beaks} singularity occurs.

In Theorem \ref{thmgeneral}, the prescribed singular curve need not be regular, and this construction
includes cone points, swallowtails and cuspidal butterflies.  In Section \ref{conessection},
we rephrase this theorem in terms of the prescribed normal, relating the result to
results in \cite{ghm}.  This allows one to compute spherical fronts with embedded cone points
from an arbitrary real analytic closed convex curve in $\SSS^2$.

In Section \ref{generatingsection} we discuss the idea of using curves as geometric generators
for spherical surfaces and compute several examples.  Numerical implementations of the
DPW method \cite{DorPW}, such as that used here, have already been discussed in the literature, e.g.~\cite{KMS}, 
and so we do not discuss implementation issues.  

In the final section, we discuss open questions, such as the topology of the maximal immersed spherical surface
with a finite order rotational symmetry. We also consider the problem of the \emph{bifurcations}
of generic one parameter families of spherical fronts.  These are in some sense the second most
common singularities one expects to encounter, as they are the unstable singularities in a 
\emph{generic} 1-parameter family.   For the case of general fronts in $\real^3$,
these have been classified in \cite{arnoldetal}.   The list of bifurcations for \emph{spherical} fronts
is certainly not the same.   We propose a plausible list of bifurcations, based on our solutions
to the singular geometric Cauchy problem.

Lastly, we include, in Appendix \ref{appendix}, a streamlined account of the solution of the
geometric Cauchy problem for CMC surfaces, providing an explicit formula for the potential for the solution, which is
lacking in the original paper \cite{bjorling}.

\section{Preliminaries}    \label{background}
In this section we briefly describe the relation between spherical surfaces and CMC surfaces, and the loop group methods
we will use.  Details here are kept to the minimum needed. For more on spherical surfaces and harmonic maps, the reader
could consult \cite{ghm}. For the loop group theory used here, see ~\cite{bjorling}.

\subsection{Spherical surfaces and CMC surfaces} \label{parallelsection}
Suppose that $g$ is a regular parameterized surface with unit normal $N$ and
 Gauss and mean curvature $K_1$ and $H_1$ respectively, with $H_1$ a non-zero constant.
Define a parallel surface by
\[
f(x,y) = g(x,y) + \frac{1}{2H_1}N(x,y).
\]
Then 
$f_x \times f_y = (K_1/(4H_1^2)) g_x \times g_y$.
Thus $f$ is immersed precisely on the set $K_1 \neq 0$.
On this set, $f$ and $g$ have the same unit normal, and $f$ has Gauss and mean curvature functions respectively given by
\[
K = 4 H_{1}^2, \quad \quad H= \frac{2H_1(2H_1^2-1)}{K_1}.
\]

	\begin{figure}[ht]
\centering
$
\begin{array}{cc}
\includegraphics[height=38mm]{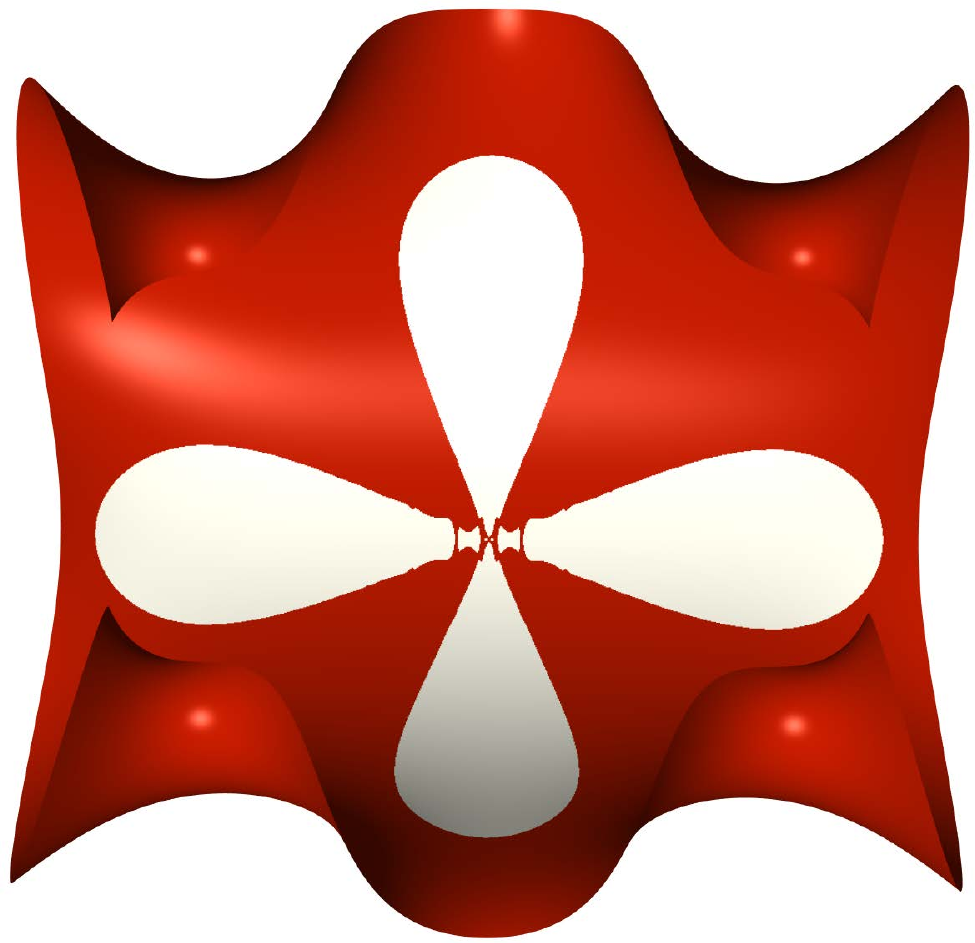} \quad  \quad & \quad
\includegraphics[height=38mm]{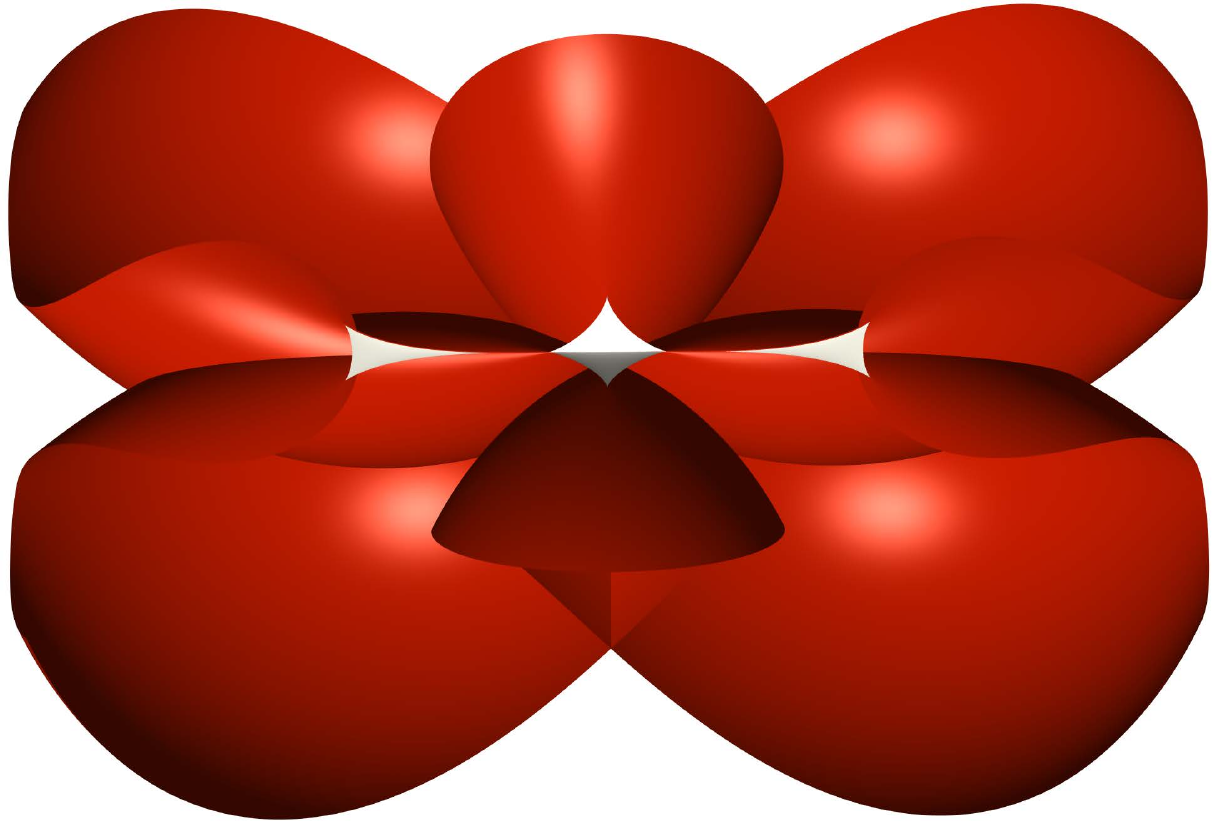}  
\end{array}
$
\caption{A  CMC $1/2$ surface (left) and the parallel spherical surface, plotted with the same color map.
Larger regions  are plotted in Figure \ref{figure8}.}
\label{figure0}
\end{figure}

Figure \ref{figure0} shows (left) a part of the unique CMC $1/2$ surface containing the plane curve with curvature function
$\kappa(s) = 1-s^4$ as a geodesic (see Theorem \ref{bjorlinthm} below). The CMC surface is colored by the sign of the Gauss
curvature.  To the right is the image of the same coordinate patch, with precisely the same color map, on the 
parallel spherical surface.  As stated above, this surface 
must have singularities
where the Gauss curvature of the parallel CMC surface changes sign, and one can indeed see that
there are cusp lines where the color of the surface changes.
 This example also shows clearly that it is not very intuitive to guess
what the parallel spherical surface to a given CMC surface will look like.

Conversely, if $f$ is a regular surface with constant positive  Gauss curvature $K$ and unit normal $N$, the parallel surface
$g_\mp=f \mp N/\sqrt{K}$ is immersed precisely on the set $H \neq \mp \sqrt{K}$,
 has the same unit normal as $f$,  and mean curvature $H_1 = \pm \sqrt{K}/2$.
In this way, spherical surfaces of constant Gauss curvature $K>0$ are in one to two correspondence with 
surfaces of constant mean curvature $H_1= \pm \sqrt{K}/2$.

\subsection{Harmonic maps into $\SSS^2$}
Let $\Omega$ be a simply connected open subset of $\C$, with holomorphic coordinates $z=x+iy$.
A smooth map $N: \Omega \to \SSS^2$ is harmonic if and only $N \times (N_{xx} + N_{yy} ) = 0$, i.e.
\[
N \times N_{z \bar z} = 0.
\]
This condition is also the integrability condition for the equation
\beq \label{sphericalsurface}
f_z = i \, N \times N_z,
\eeq
and more generally $f_z = i \, N \times N_z + a N_z$, where $a$ is any real constant.
 That is, $(f_z)_{\bar z}= (f_{\bar z})_z$ if and only if $N \times N_{z \bar z} = 0$.
Hence, given a harmonic map $N$, we can integrate the equation \eqref{sphericalsurface} to obtain a 
smooth map $f: \Omega \to \real^3$, unique up to a translation.  

A differentiable map $h: M\to \real^3$ from a surface into Euclidean space is called a \emph{frontal}
if there is a differentiable map $\mathcal{N}:M \to \SSS^2 \subset \real^3$ (or, more generally into ${\mathbb RP}^2$) such that $\dd h$ is orthogonal to $\mathcal{N}$.  The map is called a \emph{(wave) front} if
the Legendrian lift $(h, \mathcal{N}): M \to \real^3 \times \SSS^2$ is an immersion. 
Clearly, $f$ as defined above is a frontal with Legendrian lift $L:=(f,N)$.  
It follows from \eqref{sphericalsurface} that $f$ is regular if and only if $N$ is regular. At regular points
the first and second fundamental forms for $f$ are
\beqas
\mathcal{F}_I &=& |N \times N_y|^2 \, \dd x^2 + 2\langle N \times N_y, - N \times N_x \rangle  \, \dd x \dd y
   + |N \times N_x|^2 \, \dd y^2, \\
	\mathcal{F}_{II} &=& \langle N , N_x \times N_y \rangle \, (\dd x^2 + \dd y^2).
	\eeqas
Thus the metric induced by the second fundamental form is conformal with respect to the conformal structure on
$\Omega$.   Moreover, the expression for the Gauss curvature of $f$
simplifies to $1$. That is, $f$ has constant curvature $K=1$ wherever it is regular.   
Conversely, since the Gauss map of a spherical surface is harmonic with respect to the metric
induced by the second fundamental form, all spherical surfaces are obtained this way.
 Let us call the map $f$ the \emph{spherical
frontal} associated to the harmonic map $N$.

\subsection{The loop group representation}
Identify $\SSS^2=SU(2)/K$, where $K$ is the diagonal subgroup, with projection $\pi: SU(2) \to \SSS^2$ given by $\pi(X)=\Ad_X e_3$,
where 
\[
e_1 = \frac{1}{2}\bbar 0 & -i \\ -i & 0 \ebar, \quad e_2= \frac{1}{2} \bbar 0 & 1 \\ -1 & 0 \ebar \quad  e_3= \frac{1}{2} \bbar i & 0 \\ 0 & -i \ebar,
\]
 are an orthonormal basis for
$\real^3\equiv \mathfrak{su}(2)$ with inner product $\langle X, Y \rangle = -2 \trace(XY)$.  This choice of inner-product is
convenient because then the cross-product
in $\real^3$ is given by $a \times b = [a,b]$.

Let $N: \Omega \to \SSS^2$ be a smooth map, and let $F: \Omega \to SU(2)$ be any lift of $N$, i.e.~$N = \Ad_F e_3$.
 Let $\mathfrak{su}(2)=\mathfrak{k} + \mathfrak{p}$
be the decomposition corresponding to $SU(2)/K$, i.e.~$\mathfrak{k}=\hbox{span}(e_3)$ and $\mathfrak{p} = \hbox{span}(e_1,e_2)$.
The Maurer-Cartan form of $F$ then decomposes as
\[
\alpha := F^{-1} \dd F = (U_\mathfrak{k} + U_\mathfrak{p}) \dd z + ( -\bar U^t_\mathfrak{k} -\bar U^t_\mathfrak{p}) \dd \bar z.
\]
Then $N_z = \Ad_F[U_\mathfrak{p}, e_3]$, and equation \eqref{sphericalsurface} is equivalent to
\beq \label{frame1}
f_z = i \Ad_F U_\mathfrak{p}, \quad f_{\bar z} = i \Ad_F \bar U^t_{\mathfrak{p}}.
\eeq
The harmonic map equation $N \times N_{z \bar z}=0$ is equivalent to the equations
\beq \label{harmeqns2}
\partial _ {\bar z}  \, U_\mathfrak{p} +[- \bar U^t_\mathfrak{k}, U_\mathfrak{p}]=0,
\quad  \quad
\partial _ {z}  \, \bar U^t_\mathfrak{p} +[ U_\mathfrak{k},  \bar U^t_\mathfrak{p}]=0,
\eeq
and these equations are satisfied if and only if the $1$-parameter family of $1$-forms 
\[
\hat \alpha:= U_\mathfrak{p} \lambda \dd z + U_\mathfrak{k}\dd z -\bar U^t_\mathfrak{k} \dd \bar z -
     \bar U^t_\mathfrak{p} \lambda ^{-1}\dd  \bar z 
		\]
		is integrable for all complex $\lambda$, i.e.~if and only if $\dd \hat \alpha + \hat \alpha \wedge \hat \alpha = 0$.
		Consequently, we can integrate the equation $\hat F^{-1} \dd \hat F = \hat \alpha$, 
		to obtain an \emph{extended frame}  $\hat F: \Omega \to \Lambda SU(2)_\sigma$ into the group of twisted loops in $SU(2)$.
		The initial condition for the integration, at a point $z_0 \in \Omega$, is given by 
		$\hat F(z_0)= \hbox{diag}(\sqrt{\lambda}, \sqrt{\lambda}^{-1}) F(z_0)  \hbox{diag}(\sqrt{\lambda}^{-1}, \sqrt{\lambda})$,
	 the twisted form of $F(z_0)$.  Then $F= \hat F \big|_{\lambda=1}$,  and $N= \Ad_{\hat F} e_3 \big|_{\lambda=1}$,
	so $\hat F$ is also a lift of $N$.   Finally, we can recover the associated spherical frontal $f$ by the Sym formula:
	\[
	f= \mathcal{S}(\hat F) = i \left( \lambda \frac{\partial \hat F }{\partial \lambda}  \hat F^{-1}  \right)_{\lambda=1},
	\]
	because $f$ given by this formula satisfies the equation $f_z = i \Ad_F U_\mathfrak{p}$.  This construction does not
	depend on the choice of frame $F$ for $N$, and changing the integration point $z_0$ for $\hat F$ results 
	only in a translation to the solution $f$.
	
	\subsection{The DPW method}  \label{dpwsection}
	The above extended frame is identical to the extended frame for CMC surfaces, which has been discussed in many places.
	The construction for CMC surfaces differs only in the addition of a normal term in the Sym formula, because these
	are parallel surfaces to each other. The DPW method \cite{DorPW} allows one to construct all solutions from holomorphic data,
	so-called \emph{holomorphic potentials} of the form 
	\[
	\hat \eta = \sum_{n=-1}^\infty A_n \lambda^n \dd z,
	\]
	where $A_n$ are holomorphic $\mathfrak{sl}(2,\C)$-valued functions satisfying the twisting condition:
	$A_{2n}$ diagonal, $A_{2n+1}$ off-diagonal.  An extended frame is obtained by
	integrating $\hat \Phi ^{-1} \dd \hat \Phi = \hat \eta$, with $\hat \Phi(z_0)=I$ and then the (pointwise on $\Omega$)
	Iwasawa decomposition
	\beq \label{iwasawa}
	\hat \Phi  = \hat F \hat B_+, \quad \quad \hat F(z) \in \Lambda SU(2)_\sigma, \quad \hat B_+(z) \in \Lambda ^+ SL(2,\C),
	\eeq
	where $\Lambda^+G$ denotes the subgroup of loops that extend holomorphically to the unit disc in $\lambda$. The Iwasawa factorization $X= U B_+$ of  a loop $X$ is unique if we choose the constant term
	of $B_+$ to be real, i.e.~$B_+ = \hbox{diag}(\rho, \rho^{-1}) + O(\lambda)$, where $\rho \in \real$.
	
	An important difference between DPW for CMC surfaces and for spherical surfaces is \emph{regularity}.
	For a CMC surface, it is enough to choose a holomorphic potential that has the property that the $(1,2)$
	component function of the matrix function $A_{-1}$ does not vanish.  For spherical surfaces, the condition that
	$N$ (and hence $f$) is immersed works out (using $N_z = \Ad_F[U_\mathfrak{p}, e_3]$ or 
	\eqref{frame1})  to be that the upper right and lower left components
	of the matrix $U_\mathfrak{p}$ differ in absolute value. Note also that $\dd f=0$ at a point
	if and only if $U_\mathfrak{p}=0$.
Regularity can be guaranteed in a neighbourhood of the integration point by choosing
	$A_{-1}$ to have the same property, but in general the property will fail at some other points.

	\subsection{Bj\"orling's problem}   \label{bjorlingsection}
	A holomorphic potential for $\hat F$ is not unique, and this gives one the opportunity to use different potentials
	for specific purposes.  A method for solving the generalization of Bj\"orling's problem to non-minimal CMC 
	surfaces in given in \cite{bjorling},
	by defining the \emph{boundary potential} along a curve: suppose that $\hat F(x,0)=\hat F_0(x)$ is known along the curve $y=0$.
	Then define $\hat \eta$ to be the holomorphic extension of 
		\beq \label{F0mcform}
		\hat F_0^{-1} \dd \hat F_0 = \left(U_\mathfrak{p} \lambda + U_\mathfrak{k} -\bar U^t_\mathfrak{k}  -
     \bar U^t_\mathfrak{p} \lambda ^{-1} \right) \dd  x,
		\eeq
	away from the curve.  The above formula is obtained from $\hat \alpha$ by observing that $\dd z = \dd \bar z = \dd x$ along 
	the real line.
For this potential, the Iwasawa factorization is trivial along the curve, and so the solution obtained from it satisfies $\hat F(x,0)=\hat F_0(x)$,
	and is the unique solution to  the given Cauchy problem.

	\section{Spherical surfaces with branch points and rotational symmetries}   \label{symmetriessection}
	The simplest kind of holomorphic (or, in general, meromorphic) potential is a \emph{normalized potential}, which only has one term
	in the Fourier expansion:
	\beq \label{normpotential}
	\hat \eta = \bbar 0 &  a(z) \\ b(z) & 0 \ebar \lambda^{-1} \dd z.
	\eeq
The normalized potential is determined uniquely by a choice of \emph{basepoint}
$z_0$, via an extended frame $\hat F$ satisfying $\hat F(z_0)=I$, and a 
meromorphic frame $\hat F_-$ obtained from the Birkhoff decomposition
$\hat F (z)= \hat F_- (z)\hat F_+(z)$, with $\hat F_-(\lambda=\infty)=I$.
Then $\hat \eta = \hat F_-^{-1} \dd \hat F_-$ is a normalized potential. Note that $a$ and $b$
have no poles in a neighbourhood of the basepoint. 

\subsection{Local regularity and examples with branch points}
We say that $z_0$ is a \emph{singular point} for $f$ if $\hbox{rank} (\dd f(z_0)) <2$.
 Here is a local characterization of the regularity of  a spherical surface 
 in terms of holomorphic normalized potential data $(a(z),b(z))$:
\begin{lemma} \label{regularitylemma}
Let $f$ be a spherical surface and $\hat \eta = \hbox{off-diag}(a(z), b(z))\lambda^{-1} \dd z$ be the normalized potential  with basepoint $z_0$. Then:
\beqas
\hbox{rank} ( \dd f(z_0)) =1  \quad  & \Leftrightarrow  & \quad   |a(z_0)|=|b(z_0)| \neq 0, \\
\dd f(z_0)=0  \quad  & \Leftrightarrow &  \quad a(z_0) = b(z_0) = 0.
\eeqas
  Suppose now that $|a(z)| \not \equiv |b(z)|$. If $z_0$ is a singular point then either: 
\begin{enumerate}
 \item  \label{regitem1}
The derivative $\dd f(z_0)$ has rank $1$ and the singularity is not isolated, or
\item \label{regitem2}
 The derivative $\dd f(z_0)$ has rank $0$ and the singularity is isolated.
\end{enumerate}
In the second case, the singularity is a \emph{branch point}, defined to be a point where 
the harmonic map $N$ can be expressed in some local coordinates as $z \mapsto z^{k}$ for 
some integer $k\geq 2$.
\end{lemma}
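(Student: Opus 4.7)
The plan is to first extract $U_\mathfrak{p}(z_0)$ from the normalized potential, and then treat the two rank cases separately. Since the basepoint satisfies $\hat F(z_0)=I$, the Birkhoff decomposition $\hat F=\hat F_-\hat F_+$ forces $\hat F_-(z_0)=\hat F_+(z_0)=I$, and differentiating together with $\hat F_-^{-1}\dd\hat F_-=\hat\eta$ yields $\hat\alpha(z_0)=\hat\eta(z_0)+\dd\hat F_+|_{z_0}$, where the term $\dd\hat F_+|_{z_0}$ lies in $\Lambda^+\mathfrak{sl}(2,\C)_\sigma$ and so contributes no negative $\lambda$-power.  Matching the appropriate Fourier coefficient then identifies $U_\mathfrak{p}(z_0)$ with an off-diagonal matrix whose entries have moduli $|a(z_0)|$ and $|b(z_0)|$.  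The three equivalences in the first half of the lemma now follow from the regularity criterion recalled in Section \ref{dpwsection}: rank $2$ iff $|a(z_0)|\neq|b(z_0)|$, rank $1$ iff $|a(z_0)|=|b(z_0)|\neq 0$, and $\dd f(z_0)=0$ iff $U_\mathfrak{p}(z_0)=0$, i.e.\ iff $a(z_0)=b(z_0)=0$.

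For the dichotomy, note first that $|a|\not\equiv|b|$ forces $(a,b)\not\equiv(0,0)$ and hence the harmonic map $N:\Omega\to\SSS^2$ is non-constant.  In case (2), $U_\mathfrak{p}(z_0)=0$ gives $\dd N(z_0)=0$, so $z_0$ is a critical point of the non-constant harmonic map $N$.  By the classical structure theorem for such maps into $\SSS^2$, critical points are isolated and admit the local normal form $z\mapsto z^k$ in suitable complex coordinates for some integer $k\geq 2$, which is the branch-point assertion.  Since the rank of $\dd f$ coincides with that of $\dd N$, $f$ is an immersion in a punctured neighbourhood of $z_0$.

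For case (1), $a(z_0)b(z_0)\neq 0$ gives $\mathcal{Q}(z_0)\neq 0$ for the holomorphic Hopf differential $\mathcal{Q}=4pq$ attached to $N$, so both entries $p,q$ of $U_\mathfrak{p}$ are non-zero throughout a neighbourhood of $z_0$.  The singular set of $f$ is the zero locus of the real-analytic function $h=|p|^2-|q|^2$, which up to a positive factor equals $\langle N,N_x\times N_y\rangle$, the pointwise sign of the second fundamental form.  I would rule out the possibility that $z_0$ is an isolated zero of $h$ by introducing the smooth function $\phi=p/q$, well-defined near $z_0$ with $|\phi(z_0)|=1$, differentiating using the harmonic-map equations \eqref{harmeqns2} to obtain $\partial_{\bar z}\log\phi = 4\mu$ with $\mu$ the diagonal coefficient of $\bar U_\mathfrak{k}^t$, and observing that $\log|\phi|$ is then a non-constant real-analytic function vanishing at $z_0$, whose zero set is a positive-dimensional real-analytic subvariety through $z_0$.

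The main obstacle is this last step: showing that $h$ is genuinely non-constant near $z_0$ and must change sign, so that its zero set is truly one-dimensional rather than an isolated zero.  The hypothesis $|a|\not\equiv|b|$ is formulated at the normalized-potential level and must be propagated to a statement about $|p|/|q|$ in a neighbourhood; the essential fact is that $|p|\equiv|q|$ on an open set would force the image of $N$ to lie locally in a geodesic circle of $\SSS^2$, and by real-analytic continuation this would hold globally, contradicting $|a|\not\equiv|b|$.  Case (2), by contrast, reduces cleanly to the well-known isolated-critical-point theorem and Wood's normal form for non-constant harmonic maps into the sphere.
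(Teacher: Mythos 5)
Your first half — pinning down $U_\mathfrak{p}(z_0)=\hbox{off-diag}(a(z_0),b(z_0))$ from the normalization $\hat F_\pm(z_0)=I$ and then invoking the regularity criterion of Section \ref{dpwsection} — is exactly the paper's argument, and your appeal to Wood for the branch-point normal form is also what the paper does. The trouble is in the dichotomy. For item (1) you have correctly located the difficulty but your stated reason does not work: a non-constant real-analytic function vanishing at a point can certainly have an isolated zero there (take $|z-z_0|^2$), so ``$\log|\phi|$ is non-constant, real-analytic and vanishes at $z_0$'' does not yield a positive-dimensional zero set. What actually closes the gap is the elliptic structure you already have in hand: from $p_{\bar z}=2\bar\mu p$, $q_{\bar z}=-2\bar\mu q$ together with the $\mathfrak{k}$-part of the Maurer--Cartan equation (the same identity the paper exploits in the proof of Theorem \ref{beaksthm}) one gets $\Delta \log(|p|/|q|)=-8(|p|^2-|q|^2)$, so $w=\log(|p|/|q|)$ solves $\Delta w+Vw=0$ with $V$ continuous near $z_0$ (legitimate since $pq=ab$ is holomorphic and nonzero at $z_0$). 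By the Hartman--Wintner/Bers description of nodal sets of such solutions, a zero of $w\not\equiv 0$ is never isolated: the zero set consists of at least two arcs through the point. It is this sinh-Gordon-type equation, not real-analyticity alone, that forces the rank-one singularity to be non-isolated.

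For item (2) the gap is more serious, because the implication in your cited ``classical structure theorem'' runs the wrong way. Wood's theorem, as the paper quotes it, says that \emph{isolated} singularities of a non-constant harmonic map into $\SSS^2$ are branch points with normal form $z\mapsto z^k$; it does not say that every point where $\dd N=0$ is an isolated singularity. What is elementary (and is what the paper's phrase ``properties of holomorphic maps'' supports) is that the rank-zero set is discrete: via the Iwasawa relation $U_\mathfrak{p}=\hbox{off-diag}(a\rho^2,b\rho^{-2})$ with $\rho>0$, the rank-zero points near $z_0$ are the common zeros of the holomorphic functions $a$ and $b$, and $|a|\not\equiv|b|$ forces one of them to be $\not\equiv 0$. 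But discreteness of the rank-zero set does not preclude rank-one singular arcs passing through $z_0$, and this is a genuinely delicate point: if $N$ has a rank-one singular curve through the origin and one precomposes with $z\mapsto z^2$, one obtains a harmonic map with $\dd N=0$ at the origin whose singular set is still one-dimensional there, with potential data $(2za(z^2),2zb(z^2))$ satisfying $|\tilde a|\not\equiv|\tilde b|$. So the isolatedness in case (2) cannot be a soft consequence of a general structure theorem; it requires comparing the vanishing orders of $a$ and $b$ at $z_0$ and, in the equal-order case, rerunning the nodal-set analysis of item (1) for $\log(|a/b|\rho^4)$. As written, your case (2) assumes the isolatedness it is supposed to establish, and Wood's normal form may only be invoked after that step is done.
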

\begin{proof}
As remarked above, the conditions on  non-regularity and the vanishing of the derivative of $f$
are encoded in the off-diagonal components
of $U_\mathfrak{p}$.  These have the same value as $a$ and $b$ at the normalization point. 
Items \ref{regitem1} and \ref{regitem2} follow from the properties of 
holomorphic maps. 
Since the harmonic map equations are $f_x = N \times N_y$ and $f_y=-N \times N_x$, 
the rank of $f$ is the same as the rank of $N$.
John C.~Wood proved \cite{wood1977} that the isolated singularities of such a harmonic
map are branch points.
\end{proof}

Three branched examples are computed and displayed in Figure \ref{figurebranch}.
Note that some branch points are ``removable'' in that the germ of the map around $z_0$
is a branched covering of a smooth surface. Take, for example, $a(z)=z$ and $b(z)=10 z$ (or the other
way around). Setting $w=z^2/2$ we have $(z, 10 z) \dd z = (1, 10) \dd w$, and the potential
$\hat \eta = \hbox{off-diag}(1, 10) \lambda^{-1} \dd w$ generates a smooth surface. 
Galvez et al.~ \cite{ghm} prove that a branch point is removable if and only if the mean curvature
is bounded around the point.  We will consider rank \emph{one} singularities below, using a different 
type of potential that is more suitable for data along a curve.

\begin{figure}[ht]
\centering
$
\begin{array}{ccc}
\includegraphics[height=30mm]{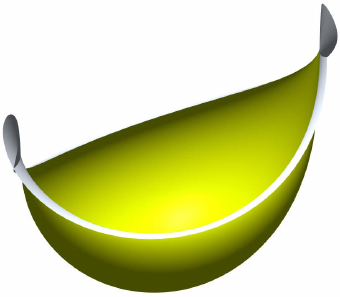}  \quad & \quad 
\includegraphics[height=30mm]{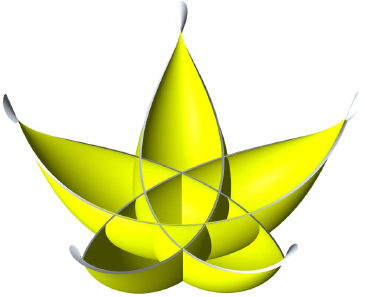}  \quad &  \quad
\includegraphics[height=30mm]{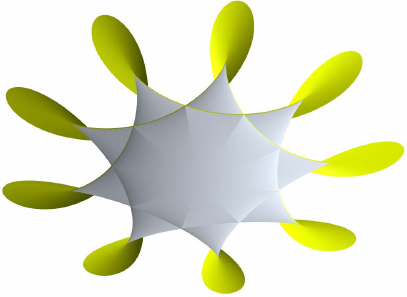}  \vspace{1ex}  \\
(z, \, 10 \,z) & (z, \,10\, z^2) &    (z, \,  10 \,z^6)  
\end{array} 
$ 
\caption{Branched spherical surfaces with normalized potentials $(a(z),b(z))$.}
\label{figurebranch}
\end{figure}

\subsection{Finite order rotational symmetries}   
As the examples in Figure \ref{figurebranch} suggest, 
we can use normalized potentials to construct spherical surfaces with rotational symmetries: 
 let $n$ be an integer and set
$\theta_n:= 2 \pi /n$.  An  immersion $g: \Omega \to \real^3$ has
a \emph{fixed-point rotational symmetry of order $n$} if there exist conformal
coordinates $z$ on $\Omega$ such that $e^{i \theta_n }\Omega = \Omega$ and
such that 
\[
g(e^{ i \theta_n} z) = R_{\theta_n} g(z)
\]
for all $z \in \Omega$, 
where $R_{\theta_n}$ is the rotation of angle $\theta_n$ about some axis.
In \cite{mincmc}, it is
shown (Lemma 7.3) that a CMC $1/2$ surface has such a  rotational symmetry 
if and only if the meromorphic functions $a$ and $b$ in the 
normalized potential, with basepoint $z_0=0$, have Laurent expansions of the form:
$a(z) = \sum_j a_{nj} z^{nj}$ and $b(z) = \sum_j b_{nj-2} z^{nj-2}$.
The same statement is valid for a spherical surface, because if $g$ has the
above symmetry, then so does the unit normal $N$, and consequently the
parallel surface $f= g+N$.  In a neighbourhood of the basepoint $z_0$, at which
$\hat F(z_0)=\hat F_-(z_0)=I$, the functions $a$ and $b$ are holomorphic.
Adapting Lemma 7.3 of \cite{mincmc} slightly to the fact that there are two parallel
CMC surfaces to a spherical surface, one of which may have a branch point, we
conclude:
\begin{theorem} (Corollary of Lemma 7.3 in \cite{mincmc}).  \label{symmetrythm}
Let $n\geq 2$ be an integer, and $\theta_n:= 2 \pi /n$.
Let $\Omega \subset \C$ be an open set with $0 \in \Omega$ and such that
 $e^{i \theta_n} \Omega = \Omega$.
Suppose that a spherical surface $f: \Omega \to \real^3$ has a fixed-point rotational symmetry 
of order  $n$.  Let $\hat \eta$ be the normalized potential \eqref{normpotential}
 with respect to basepoint $z_0=0$. Then the pair $(a(z), b(z))$ has Taylor expansion
about $0$ of one of the following two forms:
\[
\left( \sum_{j=0}^\infty a_{nj} \, z^{nj} , \,\,   \sum_{j=1}^\infty  b_{nj-2} \, z^{nj-2} \right), 
\quad \hbox{or} \quad
 \left(    \sum_{j=1}^\infty  a_{nj-2} \, z^{nj-2} ,  \, \,\sum_{j=0}^\infty b_{nj} \, z^{nj}   \right).
\]
 Conversely, any such pair of holomorphic functions $a$ and $b$ generate
a spherical surface with a  fixed-point rotational symmetry 
of order  $n$.
\end{theorem}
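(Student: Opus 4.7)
The plan is to leverage the existing CMC result (Lemma 7.3 of \cite{mincmc}) via the parallel-surface correspondence of Section \ref{parallelsection}, together with the fact (from Section \ref{dpwsection}) that the spherical frontal $f$, the Gauss map $N$, and both parallel CMC $1/2$ surfaces $g_\pm = f \mp N$ share the \emph{same} extended frame $\hat F$ and therefore the \emph{same} normalized potential $\hat\eta$ with respect to the basepoint $z_0=0$. Only the Sym-type recovery formula distinguishes $f$ from $g_\pm$. Hence any statement about the form of $\hat\eta$ that is forced by the symmetry of $g_\pm$ is automatically inherited by $f$, and vice versa.

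For the forward implication, I would first observe that a fixed-point order-$n$ rotational symmetry of $f$ forces the same symmetry on $N$: if $R_{\theta_n}f(z)=f(e^{i\theta_n}z)$, then applying $R_{\theta_n}$ to both sides of \eqref{sphericalsurface} and using that $R_{\theta_n}$ is an isometry of $\real^3$ shows that $R_{\theta_n}\cdot N(z)$ and $N(e^{i\theta_n}z)$ satisfy the same equation with the same initial data, so they agree. Consequently each parallel CMC surface $g_\pm = f\mp N$ inherits the rotational symmetry. Now at $z_0=0$, one of $g_+,g_-$ may be branched (this is precisely the situation where $K_1=0$, equivalently $H=\pm\sqrt{K}$ on $f$), while the other is a regular CMC $1/2$ surface with the symmetry. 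Applying Lemma 7.3 of \cite{mincmc} to whichever parallel CMC surface one treats as the ``base'' yields one of the two Laurent forms; the other form is produced by making the opposite choice, and the roles of $a$ and $b$ are swapped because the two parallel CMC surfaces differ in which off-diagonal component of $U_{\mathfrak p}$ is the one forced to be non-vanishing by CMC regularity. The fact that the Laurent expansions in the theorem statement contain only non-negative powers (true Taylor expansions) follows from Lemma \ref{regularitylemma}: the assumption that $f$ is a spherical \emph{surface} (an immersion, possibly with a removable branch point at $0$) prevents poles in $a$ or $b$ at the basepoint.

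For the converse, I would run the DPW construction of Section \ref{dpwsection} on a potential of either stated form and show directly that the resulting $f$ admits the rotational symmetry. The key computation is that the substitution $z\mapsto e^{i\theta_n}z$ combined with the constant twisted-loop conjugation
\[
C(\lambda)=\hbox{diag}\bigl(e^{-i\theta_n/2},\,e^{i\theta_n/2}\bigr)
\]
preserves $\hat\eta$ up to the expected Maurer--Cartan transformation: a direct check shows that the off-diagonal entries, after pulling back $dz \mapsto e^{i\theta_n}dz$, acquire the factors $e^{i\theta_n(nj+1)}$ and $e^{i\theta_n(nj-1)}$ respectively, both of which equal $e^{\pm i\theta_n}$ modulo $e^{in\theta_n}=1$, and these are exactly absorbed by conjugation by $C(\lambda)$. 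Therefore $\hat\Phi(e^{i\theta_n}z)=C\,\hat\Phi(z)\,C^{-1}$ (after adjusting the base point), and the Iwasawa splitting \eqref{iwasawa} is equivariant under this constant unitary loop, so $\hat F(e^{i\theta_n}z)=C\,\hat F(z)\,C^{-1}$. Feeding this into the Sym formula gives $f(e^{i\theta_n}z)=R_{\theta_n}\,f(z)$, where $R_{\theta_n}=\mathrm{Ad}_{C(1)}$ acts on $\real^3=\su$ as a rotation of angle $\theta_n$ about the $e_3$-axis.

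The main obstacle will be matching conventions cleanly with Lemma 7.3 of \cite{mincmc}, specifically the bookkeeping of which parallel CMC surface contributes which Taylor form and verifying the precise branch-point behaviour; once this is pinned down, the remainder is a direct translation via the shared extended frame and a routine equivariance check for the loop-group construction. Because the proof is essentially a corollary, I would keep the argument short and refer the reader to \cite{mincmc} for the CMC step rather than reprove it.
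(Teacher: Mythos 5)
Your proposal is correct and follows essentially the same route as the paper: reduce to Lemma 7.3 of \cite{mincmc} via the parallel CMC $1/2$ surfaces, which share the extended frame and hence the normalized potential with $f$, with the two Taylor forms accounted for by the two parallel surfaces (one possibly branched at the basepoint) and the absence of poles coming from the normalization $\hat F_-(0)=I$. Your explicit equivariance check for the converse is just the standard argument underlying the cited lemma, so it does not constitute a different approach.
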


\begin{figure}[ht]
\centering
$
\begin{array}{cccc}
\includegraphics[height=26mm]{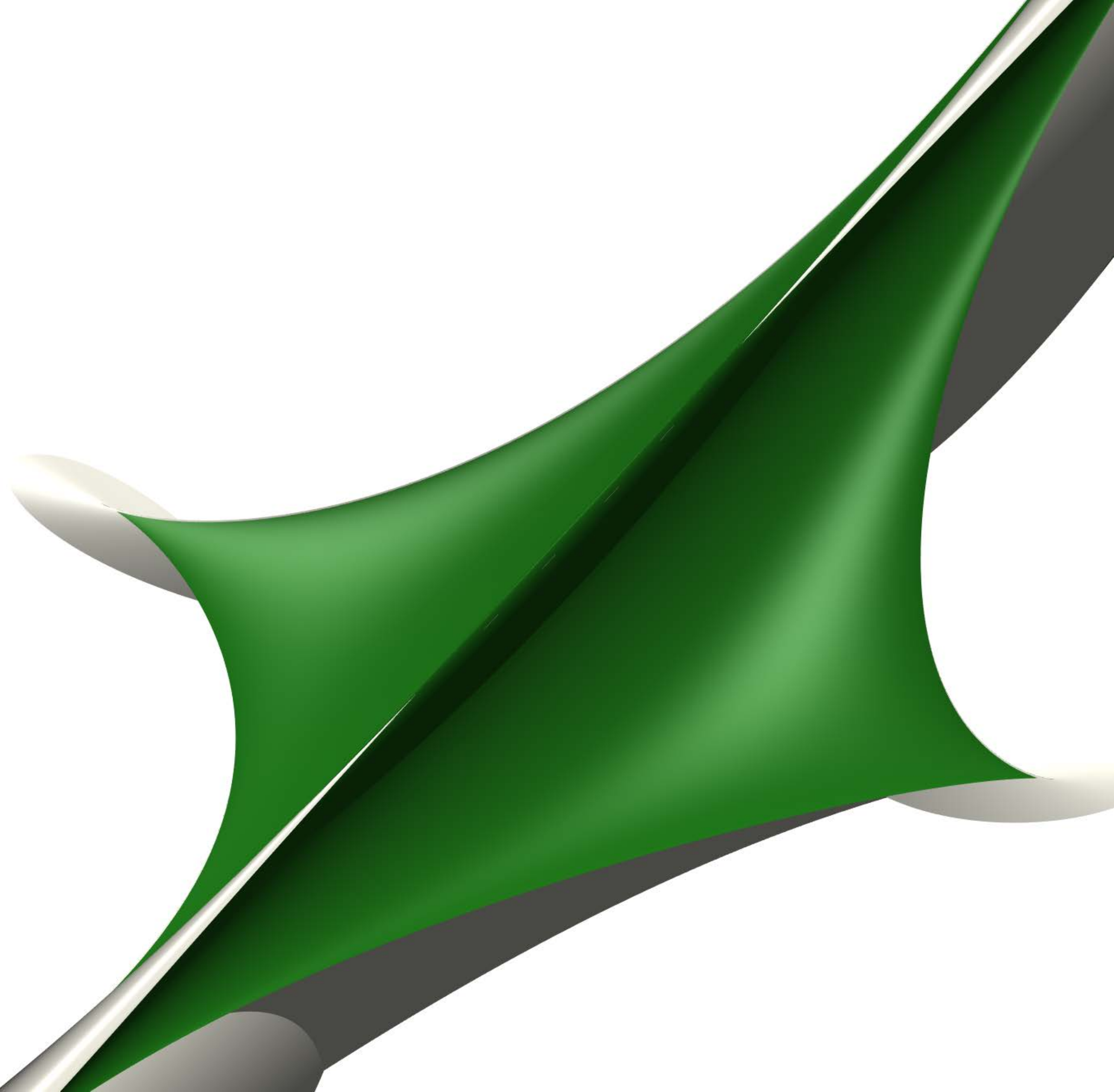}  &  \quad
\includegraphics[height=26mm]{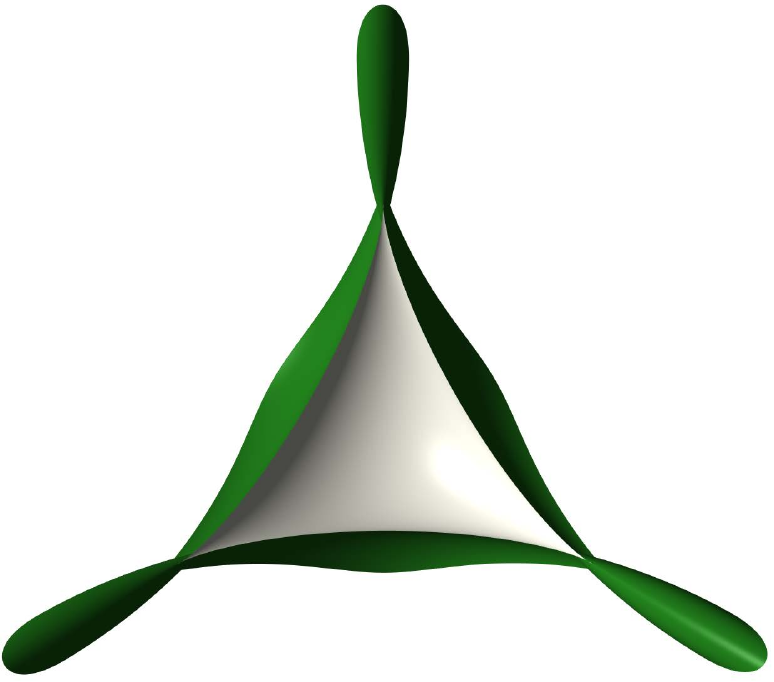} \quad &  \quad
\includegraphics[height=26mm]{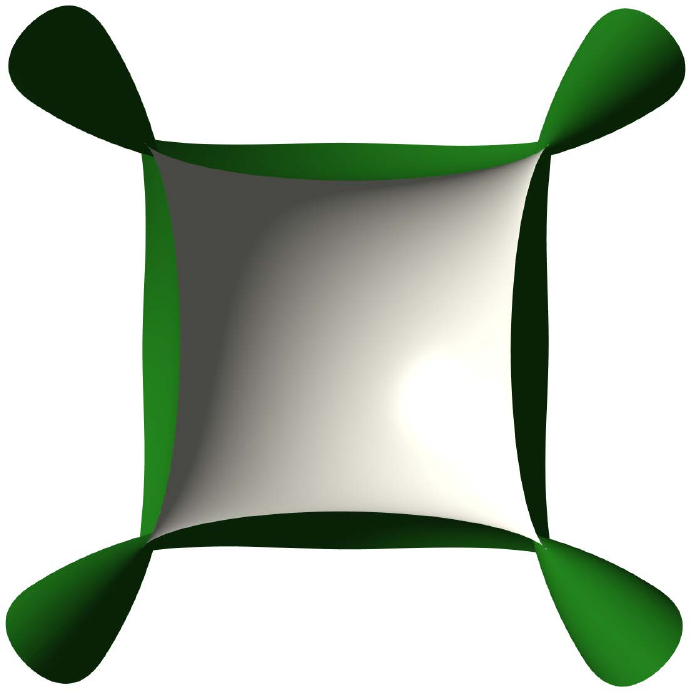}   & \quad 
\includegraphics[height=26mm]{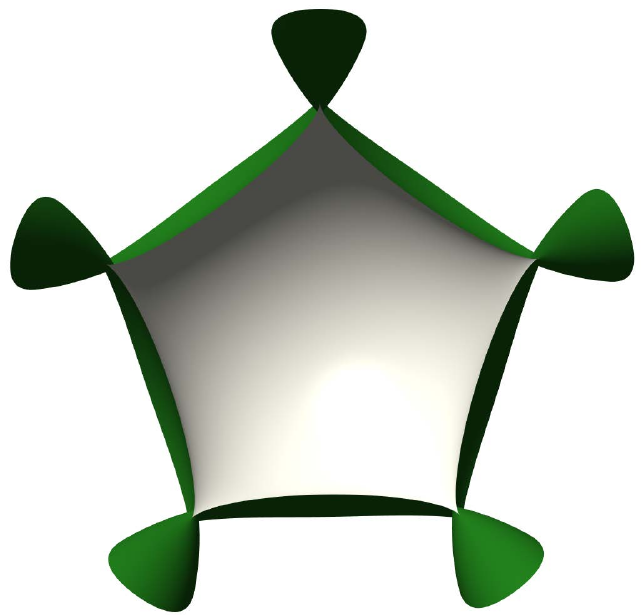}    \\
(1 + z^2, 1) & (1 + z^3, z) &    (1 + z^4, z^2)  &    (1 + z^5, z^3)  
\end{array} 
$ \vspace{2ex} \\
$
\begin{array}{ccc}
\includegraphics[height=33mm]{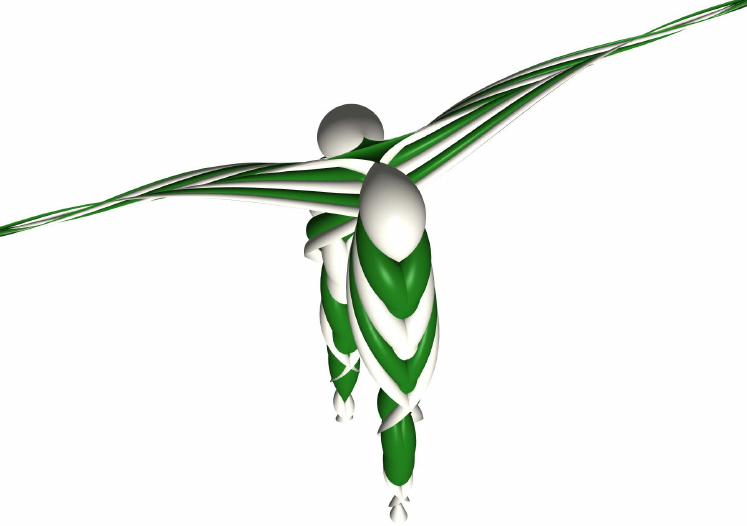}  \quad &  
\includegraphics[height=33mm]{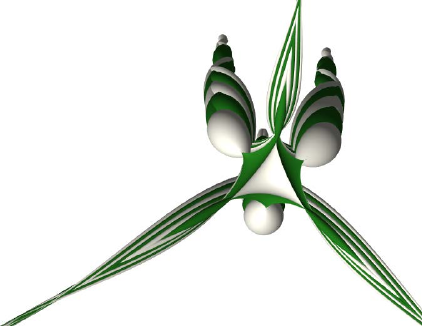}  &  \quad
\includegraphics[height=33mm]{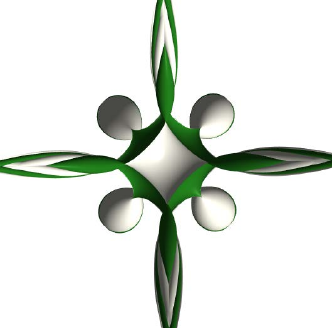}    \\
(1 + z^2, 1) & (1 + z^3, z) &    (1 + z^4, z^2)  
\end{array} 
$ \vspace{2ex} \\
$
\begin{array}{ccc}
\includegraphics[height=33mm]{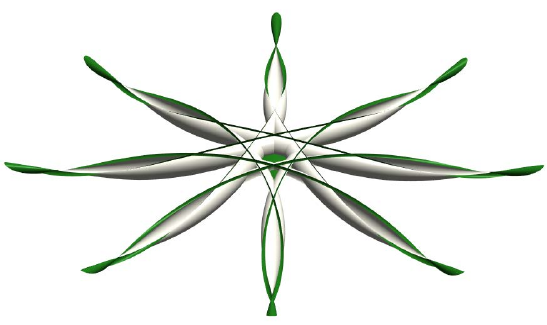}   &  \quad
\includegraphics[height=33mm]{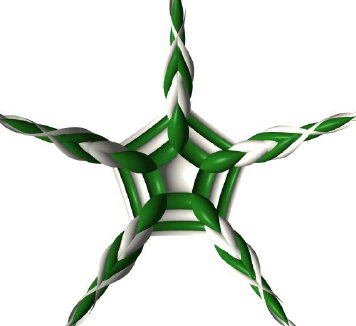} \quad & 
\includegraphics[height=33mm]{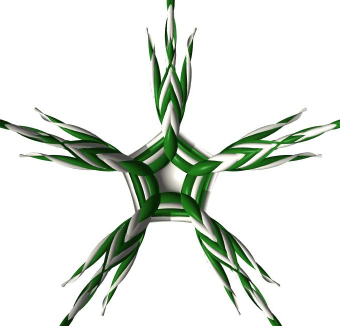}   \\
  (z^4, z^2)  & (1 , z^3) &  (1+ z^5, z^3+z^8)
\end{array}
$
\caption{Spherical surfaces computed from normalized potentials $(a(z),b(z))$.}
\label{figureA}
\end{figure}

  Examples with various polynomial choices for 
  $(a(z),b(z))$ are shown in Figure \ref{figureA}.  The cases $(1+z^2,1)$ and $(z^4,z^2)$  each
	have a singularity at $z=0$, because, for these functions $|a(0)|=|b(0)|$.

For polynomials $a(z)$ and $b(z)$, the \emph{degree} of the polynomials appears to determine
the number of ``legs'' or ``ends" on the surface.  The examples in Figure \ref{figureA}, as well as all
other polynomial examples we have checked,  follow the rule:
\[
\hbox{number of legs} = \deg(a) + \deg(b)+2.
\]

As non-polynomial examples we could consider, for an example with $n=4$,
$(a(z),b(z))= (\cos(z^2), \sin(z^2))$, (Figure \ref{figureB}, left). 
For general $n$, we could use
\[
a(z) = \cos(z^n), \quad \quad b(z) = \frac{\dd ^2}{\dd z^2}\sin(z^n).
\]
The cases $n=2$ and $n=3$ are shown in Figure \ref{figureB}, middle and right.

	\section{The geometric Cauchy problem for spherical surfaces}   \label{gcpsection}
	The idea of a geometric Cauchy problem is to specify sufficient geometric data along a space curve $f_0$ to generate
	a unique solution surface for some class of surfaces. This generalizes Bj\"orling's problem for minimal surfaces,
	where it is sufficient to prescribe the surface normal along the curve.  A spherical surface is analytic, and so we may
	always assume that $f_0$ is analytic.  Thus, if the curve is not a straight line, we can also assume that there is 
	a well-defined Frenet frame for the curve, and the curvature function $\kappa$ is real analytic and 
	can change sign at points where it vanishes \cite{willmore1959}.  We will consider three ways to prescribe a unique
	solution:  (a) stipulate that the curve is a geodesic in the solution surface, (b)  prescribe the surface normal along the curve,
	and (c) stipulate that the curve is a non-degenerate \emph{singular} curve.
	
	Suppose given a regular, arc-length parameterized,
	real analytic curve $f_0: J \to \real^3$, where $J$ is some open interval, and a vector field $N_0: J \to \SSS^2$,
	normal to $f_0$, i.e.~$\langle f_0^\prime(t), N_0(t) \rangle =0$.  
	We seek the (unique)
	spherical surface containing $f_0$ and with surface normal given, along $f_0$, by $N_0$.  For a given solution $f$,
	with a parameterization that is conformal with respect to the second fundamental form, 
	on an open set containing the initial curve we can always change our conformal coordinates so that the curve is
	given by $y=0$. We thus seek a solution $f(x,y)$ with $f(x,0)=f_0(x)$.  
	
	We want to find $U_\mathfrak{p}$ and $U_\mathfrak{k}-\bar U^t_\mathfrak{k}$ in 
	\eqref{F0mcform} in terms of the prescribed data.
	From \eqref{frame1}, we have, for a given solution,
	\[
	f_x = i \Ad_F(U_\mathfrak{p} + \bar U_\mathfrak{p}^t), \quad \quad
	f_y =  \Ad_F(-U_\mathfrak{p} + \bar U_\mathfrak{p}^t).
	\]
	Since $f_0$ is assumed regular, we can assume that $U_\mathfrak{p} + \bar U_\mathfrak{p}^t \neq 0$ along the curve.
	We can also assume, after a gauge, that the frame is chosen such that:
	\[
	 f_x = \Ad_F e_1,
	\]
	i.e.~that $  i(U_\mathfrak{p} + \bar U_\mathfrak{p}^t) = e_1$.  Expanding $U_\mathfrak{p}$ in the basis $e_1, \, e_2$ for $\mathfrak{p}$, this choice of frame gives 
	\[
	U_\mathfrak{p}= (a-\frac{1}{2}i)e_1 + b e_2, 
	\]
	where $a$ and $b$ are real-valued functions. Then
	\[
	f_x \times f_y = -2b \Ad_F e_3,
	\]
	so the surface is regular at precisely the points where $b \neq 0$.
	
		\subsection{The geodesic case}
		For clarity, we first treat the case that the curve is a geodesic in the solution surface.
	The condition that $f_0$ is a geodesic curve is that either the curvature $\kappa$ of 
	$f_0$ is zero or the surface normal $N$ coincides with the curve's normal.  Thus,	
 the geodesic condition is:
	\[
	f_{xx}= \kappa N = \kappa \Ad_F e_3, \quad \hbox{along } \{y=0\}.
	\]
With the choice of frame $F$ such that  $f_x = \Ad_F e_1$, with $U_\mathfrak{p}=(a-i/2)e_1 + be_2$, this works out 
to $f_{xx} = -2b \Ad_F e_3$, so 
\[
b=-\kappa/2, \quad \quad f_y= \Ad_F(-2ae_1+\kappa e_2),
\]
 and the surface is
immersed if and only if   $\kappa\neq 0$.  Assuming this,
denote the unit normal, binormal and torsion of the curve $f_0$ by 
	$\mathbf n$, $\mathbf b$ and $\tau$ respectively.
Then ${\mathbf n}= N=\Ad_F e_3$, so the binormal is
${\mathbf b}= -\Ad_F e_2$. Differentiating this, and writing $U_\mathfrak{k} = (c+di)e_3$, we have
\beqas
-\tau {\bf n} = {\bf b}_x &=& -\Ad_F[2c e_3 + 2ae_1-\kappa e_2, e_2] \\
&=&2c \Ad_F e_1 -2a \Ad_F e_3,
\eeqas
so $c=0$ and $a=\tau/2$.  Thus,
	\[
	U_\mathfrak{p} = \frac{\tau -i}{2}e_1 -\frac{\kappa}{2}e_2, \quad
	U_\mathfrak{k}-\bar U^t_\mathfrak{k} = 0, \quad
	-\bar U^t_\mathfrak{p} = \frac{\tau +i}{2}e_1 -\frac{\kappa}{2}e_2.
	\]
		Substituting into \eqref{F0mcform} for $\hat F_0^{-1} \dd \hat F_0$ we conclude:
	\begin{proposition}  \label{prop1}
	Let $f_0: J \to \real^3$ be a regular, arc-length parameterized, real analytic curve, with 
	non-vanishing curvature function $\kappa$, and torsion function $\tau$.
	Write $\kappa(z)$ and $\tau(z)$ for the holomorphic extensions of these
	functions to an open set $\Omega$ containing $J \times \{0\}$ in $\C$.    Then
	there is a unique spherical frontal $f: \Omega \to \real^3$, immersed on an open set containing
	$J \times \{0\}$, and such that $f(x,0) = f_0(x)$ is a geodesic curve in $f(\Omega)$.
	The solution $f$ is given by 
	the DPW method with holomorphic potential
	\[
	\hat \eta = \left(  \left(\frac{\tau(z) -i}{2}e_1 -\frac{\kappa(z)}{2}e_2 \right) \lambda  
		 + \left(\frac{\tau(z) +i}{2}e_1 -\frac{\kappa(z)}{2}e_2 \right) \lambda^{-1}  \right) \dd z.
	\]
	\end{proposition}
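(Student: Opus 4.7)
The proof has essentially two parts: a derivation of the loop‑valued Maurer–Cartan form along the initial curve, and an appeal to the boundary‑potential construction of Section \ref{bjorlingsection} to promote this to a holomorphic DPW potential. The calculation preceding the proposition statement already carries out most of the first part, so my plan is to collect those computations into a clean existence/uniqueness argument.

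First I would fix the frame. Along $y=0$, gauge the lift $F$ so that $f_x=\Ad_F e_1$ and $N=\Ad_F e_3$; this is possible because $f_0$ is arc-length parameterized and $N_0$ is unit normal to $f_0^\prime$, and it is unique up to the discrete ambiguity already removed by requiring $N=\Ad_F e_3$. With respect to this frame, write $U_\mathfrak{p}=(a-\tfrac{i}{2})e_1+b\, e_2$ and $U_\mathfrak{k}=(c+di)e_3$, so that $i(U_\mathfrak{p}+\bar U_\mathfrak{p}^t)=e_1$ automatically. The geodesic hypothesis $f_{xx}=\kappa N$ combined with $f_x \times f_y = -2b\Ad_F e_3$ yields $b=-\kappa/2$. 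Identifying the Frenet normal $\mathbf n$ with $\Ad_F e_3$ forces the binormal to be $\mathbf b=-\Ad_F e_2$; differentiating and using $\mathbf b_x=-\tau \mathbf n$ gives, as in the text, $c=0$ and $a=\tau/2$. These four scalar identities determine $U_\mathfrak{p}$, $U_\mathfrak{k}$ and hence $\hat F_0^{-1}\dd \hat F_0$ along the curve uniquely from the prescribed $\kappa$ and $\tau$.

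Second I would apply \eqref{F0mcform}. Since $U_\mathfrak{k}-\bar U_\mathfrak{k}^t=0$, the loop-valued one-form along the curve becomes
\[
\hat F_0^{-1}\dd \hat F_0 = \left(\frac{\tau(x)-i}{2}e_1-\frac{\kappa(x)}{2}e_2\right)\lambda\, \dd x + \left(\frac{\tau(x)+i}{2}e_1-\frac{\kappa(x)}{2}e_2\right)\lambda^{-1}\, \dd x.
\]
Real analyticity of $\kappa,\tau$ allows holomorphic extensions to a neighbourhood $\Omega$ of $J\times\{0\}$, and replacing $x$ by $z$ produces exactly the claimed $\hat\eta$. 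By the general principle recalled in Section \ref{bjorlingsection}, the Iwasawa factorization of the corresponding $\hat\Phi$ is trivial along $y=0$, so the DPW output $\hat F$ satisfies $\hat F(x,0)=\hat F_0(x)$, and the Sym formula then produces a spherical frontal with $f(x,0)=f_0(x)$ and normal $N_0$. The upper-right and lower-left entries of $U_\mathfrak{p}$ along the curve are $-b-ia/2$ style quantities with moduli differing by $\kappa\neq 0$, so Lemma \ref{regularitylemma} guarantees that $f$ is immersed on an open neighbourhood of the initial curve.

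For uniqueness, any spherical frontal $f$ solving the Cauchy problem, parameterized conformally with respect to the second fundamental form with $y=0$ corresponding to $f_0$, admits such an adapted frame $F$, and the scalars $a,b,c,d$ are then forced to the values above by the very same geometric identities. Hence $\hat F_0$ is determined, and $\hat\eta$ together with the initial condition that fixes $f(x_0,0)$ determines $f$ by the DPW method. The only place where care is required is consistency of the gauge choice $f_x=\Ad_F e_1$ with $\mathbf n=N=\Ad_F e_3$; this is straightforward since $\{f_x,\mathbf n,\mathbf b\}$ is an orthonormal frame along the curve. I do not expect any serious obstacle: the argument is essentially a bookkeeping of scalar components on the curve followed by invocation of the already–established Björling machinery of Section \ref{bjorlingsection}.
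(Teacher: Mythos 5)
Your proposal is correct and follows essentially the same route as the paper: the paper's proof \emph{is} the computation preceding the proposition (adapted frame with $f_x=\Ad_F e_1$, $N=\Ad_F e_3$, the geodesic condition and Frenet equations forcing $b=-\kappa/2$, $c=0$, $a=\tau/2$, then substitution into \eqref{F0mcform} and the boundary-potential/Iwasawa argument of Section \ref{bjorlingsection}). The only cosmetic difference is that the paper reads off regularity from $f_x\times f_y=-2b\,\Ad_F e_3$ rather than from the off-diagonal entries of $U_\mathfrak{p}$, but these are the same criterion.
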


\begin{figure}[ht]
\centering
$
\begin{array}{cccc}
\includegraphics[height=26mm]{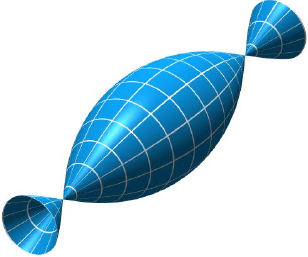}    \, \, \, & \, \, \,
\includegraphics[height=26mm]{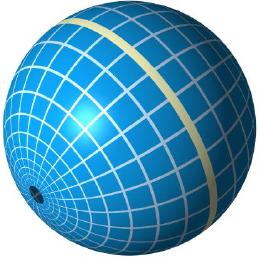}    \, \, \, & \, \, \,
\includegraphics[height=26mm]{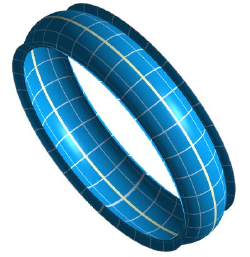}    \, \, \, & \, \, \,
\includegraphics[height=26mm]{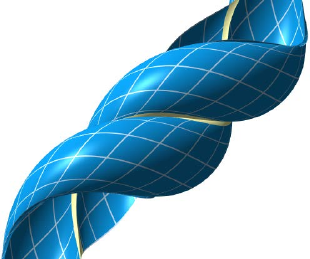}   
 \\
(\kappa, \tau)=(2,0) &  (\kappa,\tau)=(1,0) & (\kappa, \tau)=(1/2,0)  & (\kappa,\tau)=(1,1) 
\end{array}
$
\caption{Solutions of the geodesic GCP for various constant choices of $\kappa$ and $\tau$. }
\label{figure1}
\end{figure}

\begin{example} If the potential is constant then the geodesic will be a parallel with respect to the action of a 
$1$-parameter subgroup of the isometry group of $\real^3$, and the whole surface will be equivariant with
respect to this action. Some examples are computed and displayed in figure \ref{figure1}: the first three are
representative of the three types of spherical surfaces of revolution, and the last is equivariant with 
respect to screw-motions.
\end{example}

\begin{figure}[ht]
\centering
$
\begin{array}{ccc}
\includegraphics[height=25mm]{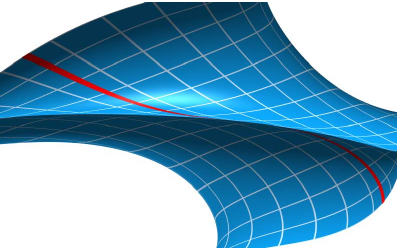}  \quad & \,\,
\includegraphics[height=25mm]{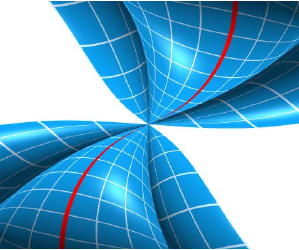}  \,\,  & \quad
\includegraphics[height=25mm]{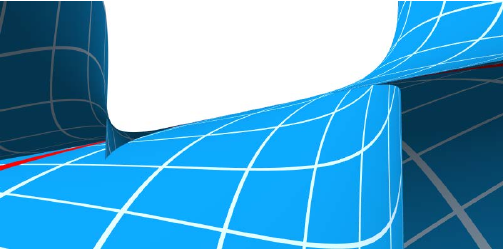}   
 \\
(\kappa(s), \tau(s))=(s,1) &  (\kappa(s), \tau(s))=(s,0) &  (\kappa(s), \tau(s))=(s^2,1) 
\end{array}
$
\caption{Examples of geodesics with inflection points. }
\label{figure2}
\end{figure}

\begin{example}  \label{sphericalexample2}
If we allow $\kappa$ to vanish, then there is still a spherical frontal generated by the potential, and there will
be a singularity at any point where $\kappa$ vanishes. Three examples are shown in Figure \ref{figure2}.
Singularities will be treated analytically below, but here we can observe that
the first case, $\kappa(s)=s$ and $\tau(s)=1$ represents the generic situation, and the geodesic
passes through a cuspidal edge at $(0,0)$.  The case $\kappa(s)=s$ and $\tau(s)=0$ is a special case, and this
is clearly a cone point.  The last case, where $\kappa$ vanishes to second order, is a degenerate singularity, 
with two cuspidal edges crossing.
\end{example}

	\subsection{The general geometric Cauchy problem}
	The solution to the geometric Cauchy problem for a given curve $f_0(x)$ with arbitrary 
	surface normal prescribed along the curve is obtained in the same way as Proposition \ref{prop1}.
	The only difference is that we replace the binormal $\bf b$ with the vector field $f_x \times N$,
	which is not parallel to $\bf b$ if $N$ is not parallel to $\bf n$.
	Thus the frame satisfies $f_x = \Ad_F e_1$, $- f_x \times N = \Ad_F e_2$ and $N=\Ad_F e_3$, and 
	\[
	f_{xx} = \kappa_g \Ad_F e_2 + \kappa_n \Ad_F e_3,
	\]
	where $\kappa_g$ and $\kappa_n$ are the geodesic and normal curvatures. 
	Then going through similar computations as before, we have $c=\kappa_g/2$ and $b=-\kappa_n/2$.
	Differentiating $N=\Ad_F e_3$ we find $a=\mu/2$, where
	$\mu=\langle f_x \times N, N_x\rangle$.  The surface regularity condition is $\kappa_n \neq 0$. In conclusion:
		\begin{theorem}  \label{thm1}
	Let $f_0: J \to \real^3$ be a regular, arc-length parameterized, real analytic curve
	and $N_0: J \to \SSS^2$ a real analytic map such that
	\[
	\langle f_0^\prime(x), N_0(x) \rangle =0, \quad \quad 
	\kappa_n(x):= \langle f_0^{\prime \prime}(x), N_0(x) \rangle \neq 0.
	\]
Set $\mu:= \langle f_0^\prime \times N_0, N_0^\prime \rangle$,
and $\kappa_g := \langle f_0^{\prime \prime}, N_0 \times f_0^\prime \rangle$.
Then the analogue of Proposition \ref{prop1} holds, and the unit normal $N$ of the solution
spherical frontal $f$ satisfies $N(x,0)=N_0(x)$.  
	The solution $f$ is given by 
	the DPW method with holomorphic potential
	\[
	\hat \eta = \left(  \left(\frac{\mu(z) -i}{2}e_1 -\frac{\kappa_n(z)}{2}e_2 \right) \lambda  
	 + \kappa_g(z) e_3
		 + \left(\frac{\mu(z) +i}{2}e_1 -\frac{\kappa_n(z)}{2}e_2 \right) \lambda^{-1}  \right) \dd z.
	\]
	\end{theorem}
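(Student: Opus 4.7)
The plan is to follow the structure of the proof of Proposition \ref{prop1}, replacing the Frenet frame of $f_0$ by a frame adapted to the prescribed unit normal $N_0$. I would first fix the gauge of the underlying frame $F$ along the curve so that $f_x = \Ad_F e_1$ and $\Ad_F e_3 = N_0$; this forces $\Ad_F e_2 = -(f_x \times N_0)$, and via $f_x = i\,\Ad_F(U_\mathfrak{p} + \bar U_\mathfrak{p}^t)$ it constrains $U_\mathfrak{p} = (a - i/2)\, e_1 + b\, e_2$ with $a, b \in \R$. Writing $U_\mathfrak{k} = (c + di)\, e_3$, only $c$ enters the boundary potential, since $U_\mathfrak{k} - \bar U_\mathfrak{k}^t = 2c\, e_3$; the parameter $d$ is a gauge degree of freedom and disappears from \eqref{F0mcform}.

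To pin down $a, b, c$ I would differentiate along the curve. Using $F^{-1} F_x = 2c\, e_3 + 2a\, e_1 + 2b\, e_2$ together with the brackets $[e_3, e_1] = e_2$ and $[e_2, e_1] = -e_3$, the identity $f_{xx} = \Ad_F[F^{-1}F_x, e_1]$ gives
\[
f_{xx} = -2c\,(f_x \times N_0) - 2b\, N_0.
\]
Comparing with the Darboux decomposition $f_{xx} = \kappa_g (N_0 \times f_x) + \kappa_n N_0$ yields $c = \kappa_g/2$ and $b = -\kappa_n/2$. A parallel differentiation of $N_0 = \Ad_F e_3$ produces $(N_0)_x = 2a\,(f_x \times N_0) + 2b\, f_x$; pairing with the unit vector $f_x \times N_0$ gives $\mu = 2a$, hence $a = \mu/2$.

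Substituting these values into \eqref{F0mcform} reproduces the potential stated in the theorem once the real-analytic functions $\mu, \kappa_n, \kappa_g$ are extended holomorphically to a neighborhood $\Omega$ of $J \times \{0\}$ in $\C$. The Björling-type construction reviewed in Section \ref{bjorlingsection} then guarantees that the DPW extended frame agrees with $\hat F_0$ along $y = 0$, so $f(x,0) = f_0(x)$ and $N(x,0) = N_0(x)$ hold automatically; the hypothesis $\kappa_n \neq 0$ is exactly the regularity condition $b \neq 0$, which makes $f$ immersed on a neighborhood of the curve.

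The only genuinely new feature compared with Proposition \ref{prop1} is the $\lambda^0$ term $\kappa_g\, e_3$ in the potential, reflecting the fact that the adapted frame now rotates about $N$ with angular velocity $\kappa_g$ instead of coinciding with the Frenet frame. No new analytic obstacle arises: the main bookkeeping point is simply verifying that the imaginary part $d$ is genuinely gauge and does not enter the boundary potential, after which uniqueness of the solution is inherited from the uniqueness of the boundary potential and of the Iwasawa decomposition used in the DPW procedure.
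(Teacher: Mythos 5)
Your proposal is correct and follows essentially the same route as the paper: the paper's own (very terse) argument is exactly to rerun the geodesic computation of Proposition \ref{prop1} with the frame $f_x=\Ad_F e_1$, $-f_x\times N=\Ad_F e_2$, $N=\Ad_F e_3$, obtaining $c=\kappa_g/2$, $b=-\kappa_n/2$, $a=\mu/2$ and substituting into \eqref{F0mcform}. Your write-up merely makes explicit the bracket computations and the observation that $d$ drops out of $U_\mathfrak{k}-\bar U_\mathfrak{k}^t$, which the paper leaves implicit.
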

	
	\begin{figure}[ht]
\centering
$
\begin{array}{ccc}
\includegraphics[height=30mm]{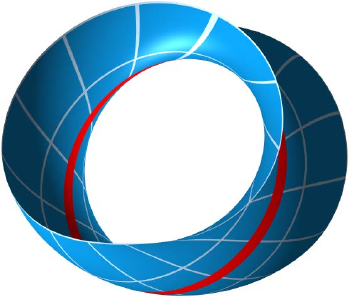}  \quad & \quad
\includegraphics[height=30mm]{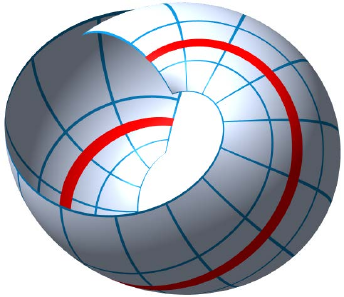}  \quad & \quad
\includegraphics[height=30mm]{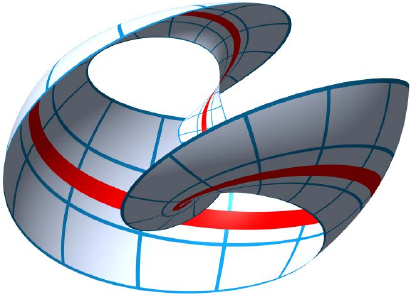}   \\
\hbox{$x$ interval } [ 0,2\pi ]   & \hbox{$x$ interval }   [0,2\pi]   & \hbox{$x$ interval }  [0,4\pi]  
\end{array}
$
\caption{Non-orientable spherical cylinder and parallel CMC surface.}
\label{figure3}
\end{figure}
\begin{example}  \label{nonorientableexample}
Taking 
\[
f_0(s)=(\cos(s),\sin(s),0), \quad \quad
N_0(s)=\cos(s/2)(0,0,1)+\sin(s/2)f_0(s),
\]
we have $N_0(s+2\pi)=-N_0(s)$, while
$f_0(s+2\pi)=f_0(s)$.  Since  the equation $f_z=i N \times N_z$ defining $f$ is invariant under the
$N \mapsto -N$,  the frontal $f$ is periodic in the $x$ direction with period $2\pi$. But 
the map $f:  (\real/2\pi \Z) \times \real \to \real^3$ is not orientable:
 the smooth harmonic map $N: \real^2 \to \SSS^3$ corresponding to the solution 
is well defined on  $(\real/4\pi \Z) \times \real$  but not on  $(\real/2\pi \Z) \times \real$.
 Now, we have
\[
\kappa_n(s)=-\sin(s/2), \quad \kappa_g(s)=\cos(s/2), \quad  \mu(s)=1/2.
\]
The surface obtained from this data has a singularity when $\kappa_n(s)=0$, 
that is, at $s=2k\pi$ for integers $k$, and is shown in Figure \ref{figure3}, left.
Although this cylinder is non-orientable (as a frontal),
topologically it is a cylinder, not a M\"obius strip.
The parallel CMC 1/2 surface is shown at center.  The third image effectively shows
both the parallel CMC $1/2$ and CMC $-1/2$ surfaces with respect to the unit normal
on the strip $0 < x< 2\pi$, which combine into a single oriented CMC cylinder.
\end{example}

\subsection{The singular geometric Cauchy problem}  
The condition that the curve $f_0(x) = f(x,0)$ is a singular
	set is $f_x \times f_y=0$,
	i.e.~$b=0$, and the singular set  is \emph{non-degenerate}, that is locally a regular curve in the coordinate domain, if and only if
	$\dd b \neq 0$.
The frame chosen above satisfies
	$U_\mathfrak{p}=  (a-\frac{1}{2}i)   e_1$ along such a singular curve.
	Now write $U_\mathfrak{k}=(c+ di)e_3$,  so
	\beqas
	f_{xx}  &=& \Ad_F [U-\bar U^t, e_1]  \\
	  &=& \Ad_F[2 c e_3 + 2a e_1, e_1] =  2c \Ad_F e_2.
	\eeqas
	Hence,  
	\[
	c = \frac{\kappa}{2}, \quad {\mathbf n} = \Ad_F e_2, \quad {\mathbf b} = \Ad_F e_3 = N_0.
	\]
	Similarly
	$- \tau \, {\mathbf n} = {\mathbf b}_x = \Ad_F[2 ce_3 + 2ae_1, e_3] = -2a \Ad_F e_2$, so 
	$a = \tau /2$. 	Thus, along $y=0$, we have
	\[
	U_\mathfrak{p} = \frac{\tau -i}{2}e_1, \quad
	U_\mathfrak{k}-\bar U^t_\mathfrak{k} = \kappa e_3, \quad
	-\bar U^t_\mathfrak{p} = \frac{\tau +i}{2}e_1.
	\]
	To find the non-degeneracy condition, $\partial_y b \neq 0$, we can substitute
	$U_\mathfrak{p} = (a-i/2)e_1 + b e_2$, with $a(x,0)=\tau(x)/2$ and $b(x,0)=0$, and
	$U_\mathfrak{k} = (c+i d) e_3$, with $c(x,0)=\kappa(x)/2$, into the harmonic map 
	equations \eqref{harmeqns2}.  The $e_2$ component of these equations gives
	$\partial_x b = d-2ac$ and $\partial_y b = c(1+4a^2)$.   Substituting the values of $a$, $b$ and
	$c$ along $y=0$, this becomes
	\beq  \label{dbexpressions}
	d(x,0)=\frac{\kappa(x) \tau(x)}{2}, \quad 
	\quad \frac{\partial b}{\partial y}(x,0) = \frac{\kappa(x)(1+\tau^2(x))}{2},
	\eeq
	and so the non-degeneracy condition is $\kappa \neq 0$.
	Substituting the above data into \eqref{F0mcform} for $\hat F_0^{-1} \dd \hat F_0$ we conclude:
	\begin{theorem}  \label{sgcpthm1}
	Let $f_0: J \to \real^3$ be a regular, arc-length parameterized, real analytic curve, with  curvature and torsion functions respectively $\kappa$ and $\tau$,  such that $\kappa \not \equiv 0$.   Write $\kappa(z)$ and $\tau(z)$ for the holomorphic extensions of these
	functions to an open set containing $J \times \{0\}$ in $\C$.    Then:
	\begin{enumerate}
	\item
	The unique solution to the singular geometric Cauchy problem for $f_0$ is given by
	the DPW method with holomorphic potential
	\[
	\hat \eta = \left( \frac{\tau(z) -i}{2} \lambda e_1  + \kappa(z) e_3  + \frac{\tau(z) +i}{2} \lambda^{-1} e_1 \right) \dd z.
	\]
	\item
	The singular set $C := J \times \{0\}$ is non-degenerate at a point $x_0$ if and only if $\kappa(x_0) \neq 0$. In the neighbourhood
	of such a point, $C$ is locally diffeomorphic to a cuspidal edge.  All cuspidal edges on spherical frontals
	are obtained this way.
	\end{enumerate}
	\end{theorem}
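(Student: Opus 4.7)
The plan is to combine the computations already carried out above the statement with two external ingredients: the boundary potential method of Section \ref{bjorlingsection} and the standard Kokubu--Saji--Umehara--Yamada (KSUY) criterion for cuspidal edges on wave fronts.

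For part (1), the derivation preceding the theorem already identifies the values of $U_\mathfrak{p}$ and $U_\mathfrak{k}-\bar U_\mathfrak{k}^t$ along $\{y=0\}$ purely in terms of $\kappa$ and $\tau$. Substituting these into \eqref{F0mcform} and replacing the real variable $x$ by the complex variable $z$ (using analyticity of $\kappa$ and $\tau$) gives the stated potential $\hat\eta$. Existence follows from the DPW construction of Section \ref{dpwsection} applied to this $\hat\eta$, and the Iwasawa factorization is trivial along $y=0$ because the Maurer--Cartan form \eqref{F0mcform} already takes values in the twisted loop algebra of $SU(2)$; hence the resulting frontal restricts to $f_0$ on $\{y=0\}$. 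Uniqueness is immediate from the derivation, since any spherical frontal having $f_0$ as non-degenerate singular curve must, after the gauge normalization $f_x=\Ad_F e_1$, give rise to exactly the extended frame whose Maurer--Cartan form is \eqref{F0mcform}.

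For part (2), the singular set of $f$ is $\{b=0\}$, and since $b\equiv 0$ on $\{y=0\}$ the form $\dd b|_{(x_0,0)}$ reduces to $\partial_y b(x_0,0)\,\dd y$. The formula \eqref{dbexpressions} derived from the harmonic map equations gives $\partial_y b(x_0,0) = \kappa(x_0)(1+\tau(x_0)^2)/2$, so non-degeneracy at $(x_0,0)$ is equivalent to $\kappa(x_0)\neq 0$. To identify the singularity as a cuspidal edge, I would compute the null direction of $\dd f$ at the singular point. From $f_x=\Ad_F e_1$ and $f_y = \Ad_F(-2a e_1 - 2b e_2)$, specialized to $y=0$ with $a=\tau/2$ and $b=0$, one reads $f_y=-\tau f_x$, so the null vector field is $\eta = \partial_y + \tau\,\partial_x$, which is transverse to the singular curve $\{y=0\}$ at every point. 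Invoking the KSUY criterion (a non-degenerate singular point of a wave front whose null direction is transverse to the singular curve is locally a standard cuspidal edge) gives the cuspidal edge conclusion.

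The converse (every cuspidal edge arises this way) follows by fixing conformal coordinates around the cuspidal edge so that the singular set is $\{y=0\}$, reading off the analytic curvature and torsion of the regular image curve $f_0 = f(\cdot,0)\subset\real^3$, and applying part (1) together with uniqueness. The main obstacle I anticipate is verifying the hypotheses of the KSUY criterion: one must confirm that $f$ is a bona fide wave front (not just a frontal) at the non-degenerate singular points, equivalently that the Legendrian lift $(f,N)$ is immersed. This reduces to checking that the Gauss map $N=\Ad_F e_3$ is itself immersed at a non-degenerate singular point of $f$, which should be extracted from the explicit form of $U_\mathfrak{p}$ along $\{y=0\}$ (the relevant components $a-i/2$ and $b$ differ in absolute value precisely because $a-i/2$ has non-zero imaginary part while $b=0$, so by the regularity criterion in Section \ref{dpwsection} the harmonic map $N$ is immersed there).
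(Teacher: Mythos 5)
Your overall route is the same as the paper's: part (1) is obtained by substituting the frame computations preceding the statement into \eqref{F0mcform} and invoking the boundary-potential mechanism of Section \ref{bjorlingsection} together with the uniqueness argument of \cite{bjorling}; part (2) combines the non-degeneracy formula \eqref{dbexpressions} with the transversality criterion of \cite{krsuy} applied to the null direction $\tau\partial_x+\partial_y$, and the converse is precisely the paper's observation that the derivation began from an arbitrary non-degenerate singular curve on a spherical frontal. All of those steps check out.

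The one step you single out as the main obstacle --- verifying that $f$ is a genuine front so that the criterion of \cite{krsuy} applies --- is the one place your argument fails. You claim the Gauss map $N$ is immersed along the singular curve because the coefficients $a-\tfrac{i}{2}$ and $b=0$ of $U_\mathfrak{p}$ differ in absolute value. But the regularity criterion of Section \ref{dpwsection} concerns the upper-right and lower-left \emph{matrix entries} of $U_\mathfrak{p}$, not its coefficients in the basis $e_1,e_2$: for $U_\mathfrak{p}=(a-\tfrac{i}{2})e_1+be_2$ these entries are $\tfrac{1}{2}\left(-i(a-\tfrac{i}{2})\pm b\right)$, which are \emph{equal} when $b=0$. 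In fact $N$ cannot be immersed there, since by Lemma \ref{regularitylemma} the rank of $\dd N$ equals the rank of $\dd f$, which is $1$ along the singular curve. The correct repair is the fact the paper uses in the proof of Theorem \ref{beaksthm}: a spherical frontal is a front wherever $\dd f$ has rank at least $1$. Concretely, from $f_x=N\times N_y$ and $f_y=-N\times N_x$ one sees that a common null vector $a\partial_x+b\partial_y$ of $\dd f$ and $\dd N$ would force $aN_x+bN_y=0$ and $aN_y-bN_x=0$, hence $(a^2+b^2)N_x=(a^2+b^2)N_y=0$, contradicting rank one; so the kernels are transverse and the Legendrian lift $(f,N)$ is an immersion. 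With that substitution your proof is complete and coincides with the paper's.
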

\begin{proof}
The uniqueness of the solution can be shown in the same way as in \cite{bjorling}.
The only details remaining are those concerning cuspidal edges.
The curve $f(x,0)$ is a cuspidal edge, because the null vector field, i.e.~the kernel of $\dd f$,
	is given by $\tau \partial _x + \partial _y$ and this is transverse to the singular curve
	 -- a criterion for a cuspidal edge \cite{krsuy}.
	We began the discussion with an arbitrary singular curve on a spherical frontal, and 
so all cuspidal edges  are obtained this way.
\end{proof}
	
			\begin{figure}[ht]
\centering
$
\begin{array}{cccc}
\includegraphics[height=26mm]{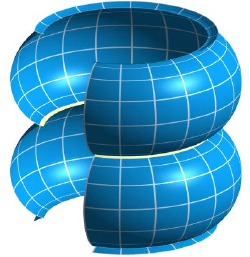}  \, & \,
\includegraphics[height=26mm]{images/sing2.pdf}  \,\, & \, \,
\includegraphics[height=26mm]{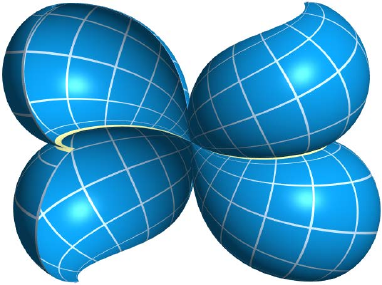}    \,& \,
\includegraphics[height=26mm]{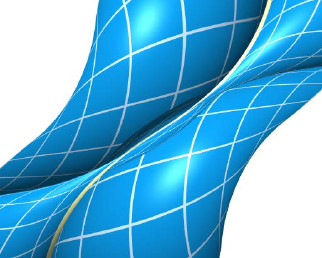}  \\
(\kappa, \tau)=(1,0) &  (\kappa,\tau)=(1,1) & (\kappa, \tau)=(s,0) & (\kappa,\tau)=(s,1)
\end{array}
$
\caption{Solutions of the singular GCP for various choices of $\kappa(s)$ and $\tau(s)$. }
\label{figure4}
\end{figure}

	Some examples are shown in Figure \ref{figure4}.
	The example $(\kappa, \tau)=(s,1)$ apparently has a cuspidal beaks singularity at $z=0$, as does
	the example generated by the normalized potential $(a,b)=(1+z^2,1)$ in Figure \ref{figureA}.
	The cuspidal beaks singularity is locally diffeomorphic to the map
	$\real^2 \to \real^3$ given by $(u,v) \mapsto (3u^4-2u^2v^2, u^3-uv^2,v)$ at $(0,0)$ 
	(see, e.g. \cite{fukuihasegawa}).  There is an identification criterion for cuspidal beaks of Izumiya, Saji and Takahashi \cite{izsata},  conveniently stated as Theorem 3.2 in \cite{fukuihasegawa}:
	a degenerate singularity of a front $f$ with unit normal $N$ is locally diffeomorphic
	to a cuspidal beaks around $p$ if and only if $\hbox{rank}(\dd f_p)=1$, $\det(\hbox{Hess} \, \mu(p))<0$
	and $\eta^2 \mu(p) \neq 0$, where $\mu := \langle f_x \times f_y , N \rangle$ and
	$\eta$ is the null vector field. Using this we can prove:
	\begin{theorem}  \label{beaksthm}
	Let $f_0$ and $\hat \eta$ be as in Theorem \ref{sgcpthm1}. Then the singular
	set is locally diffeomorphic to a cuspidal beaks at $(x_0, 0)$ if and only if
	$\kappa(x_0)=0$, $\kappa^\prime(x_0) \neq 0$ and $\tau(x_0) \neq 0$.
	\end{theorem}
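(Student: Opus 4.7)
My plan is to verify the three conditions of the Izumiya--Saji--Takahashi criterion quoted immediately before the theorem. Two preparatory identifications set things up. First, the frame chosen in the derivation of Theorem \ref{sgcpthm1} gives $f_x\times f_y=-2b\,\Ad_F e_3$, so $\mu:=\langle f_x\times f_y,N\rangle$ is a nonzero constant multiple of $-2b$, and I may verify each criterion using $b$ in place of $\mu$. Second, from the proof of Theorem \ref{sgcpthm1}, the null vector at a point of the singular curve $\{y=0\}$ is $\eta=\tau(x)\partial_x+\partial_y$. Along $\{y=0\}$ one has $b\equiv 0$, hence $b_x\equiv 0$, and \eqref{dbexpressions} gives $b_y(x,0)=\kappa(x)(1+\tau^2(x))/2$. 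The rank condition $\hbox{rank}(\dd f_p)=1$ thus holds automatically on the singular curve, and the degeneracy condition $\dd\mu(p)=0$ is equivalent to $\kappa(x_0)=0$. I assume the latter henceforth; it forces $c(x_0,0)=d(x_0,0)=0$.

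Next I compute the Hessian of $b$ at $p=(x_0,0)$. Two entries come for free: $b_{xx}(x_0,0)=0$ from $b(x,0)\equiv 0$, and $b_{xy}(x_0,0)=\kappa'(x_0)(1+\tau^2(x_0))/2$ from differentiating \eqref{dbexpressions}. The remaining entry $b_{yy}(x_0,0)$ is the main computation. I plan to extract it by expanding the harmonic map system \eqref{harmeqns2} in the basis $e_1,e_2$ with $U_{\mathfrak p}=(a-i/2)e_1+be_2$ and $U_{\mathfrak k}=(c+di)e_3$, which yields, globally on $\Omega$, the identity
\[
b_y=c+2ad,
\]
and then reading off from the $\mathfrak{k}$-component of the Maurer--Cartan equation $\dd\alpha+\alpha\wedge\alpha=0$ the further relation $c_y=-b-d_x$. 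Differentiating $b_y=c+2ad$ with respect to $y$ and specialising to $p$, where $b,c,d$ vanish, reduces it to $b_{yy}(x_0,0)=c_y(x_0,0)=-d_x(x_0,0)$; since $d(x,0)=\kappa(x)\tau(x)/2$ and $\kappa(x_0)=0$, this evaluates to $-\kappa'(x_0)\tau(x_0)/2$.

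With the Hessian in hand, the remaining two criteria follow easily. One finds $\det\hbox{Hess}\,b(p)=-[\kappa'(x_0)(1+\tau^2(x_0))/2]^2$, so $\det\hbox{Hess}\,\mu(p)<0$ is equivalent to $\kappa'(x_0)\ne 0$. Since $\dd\mu(p)=0$, the quantity $\eta^2\mu(p)$ equals the quadratic form $\hbox{Hess}\,\mu(p)$ applied to $\eta_p=(\tau(x_0),1)$, and evaluates to a nonzero scalar multiple of $\tau(x_0)\kappa'(x_0)(1+2\tau^2(x_0))$, which is nonzero precisely when $\kappa'(x_0)\ne 0$ and $\tau(x_0)\ne 0$. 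Conjoining the three criteria produces exactly $\kappa(x_0)=0$, $\kappa'(x_0)\ne 0$ and $\tau(x_0)\ne 0$, as claimed. The main obstacle is the $b_{yy}$ computation: the reduced equations \eqref{dbexpressions} hold only along the singular curve, so the transverse second derivative cannot be read off from them, and one must appeal to globally-valid equations from the full harmonic / Maurer--Cartan system, taking care not to shortcut through curve-specific simplifications.
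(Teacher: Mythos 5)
Your overall route is the same as the paper's: both arguments verify the Izumiya--Saji--Takahashi criterion using $\mu=-2b$, the null field $\eta=\tau\partial_x+\partial_y$, the vanishing of $b$ and $b_x$ along the curve, and $b_{xy}(x_0,0)=\kappa'(x_0)(1+\tau^2(x_0))/2$ obtained by differentiating \eqref{dbexpressions} in $x$. The rank condition, the identification of degeneracy with $\kappa(x_0)=0$, and the Hessian-determinant step all match the paper and are fine.

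The gap is in your evaluation of $b_{yy}(x_0,0)$. From your form of the $e_2$-component of \eqref{harmeqns2}, $b_y=c+2ad$, one gets $b_{yy}=c_y+2a_yd+2ad_y$. At $p=(x_0,0)$ the term $2a_yd$ dies because $d(p)=\kappa(x_0)\tau(x_0)/2=0$, but the term $2a(p)\,d_y(p)=\tau(x_0)\,d_y(p)$ does not: knowing that $b,c,d$ vanish at $p$ says nothing about $d_y(p)$, and $\tau(x_0)\neq0$ is precisely the case of interest. You silently drop this term, so your value $b_{yy}(p)=-\kappa'(x_0)\tau(x_0)/2$, and hence your $\eta^2\mu(p)\propto\tau\kappa'(1+2\tau^2)$, are not justified. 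For comparison, the paper differentiates the along-the-curve identity $b_y=c(1+4a^2)$ (which is $b_y=c+2ad$ with the curve-only relation $d=2ac$ inserted) and obtains $b_{yy}(p)=c_y(p)(1+\tau^2)$, i.e.\ $\eta^2\mu(p)\propto\kappa'\tau(1+\tau^2)$; the two answers agree only if $d_y(p)=2a(p)c_y(p)$, which neither computation establishes. Your closing caveat---that identities valid only along the singular curve must not be differentiated transversally---is well taken, and is in fact exactly the shortcut the paper's own proof uses; but your fix is incomplete. To close the argument you must either determine $d_y(p)$ from the full integrability system (also keeping in mind that the adapted-frame ansatz $U_\mathfrak{p}=(a-i/2)e_1+be_2$ with $a,b$ real forces $|f_x|\equiv1$ and so itself only holds along the curve), or show directly that the surviving term cannot spoil the nonvanishing. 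Fortunately the qualitative conclusion is robust---both candidate values of $\eta^2\mu(p)$ vanish exactly when $\kappa'(x_0)\tau(x_0)=0$---so the stated equivalence survives, but as written your derivation of the third criterion has a hole.
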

	\begin{proof}
	We use the criteria given above:  the rank of $\dd f$ is certainly $1$, and a spherical
	frontal is a front if and only if its derivative has rank at least $1$.  The null direction 
along $y=0$ is	$\eta = \tau \partial _x + \partial _y$ and we have $\mu = -2b$.
Hence, along $y=0$, we have 
$ \eta \mu = -2(\tau (x) \partial_x b + \partial_y b)$ and 
\[
\eta(\eta \mu) = -2\left( \eta(\tau) \frac{\partial b}{\partial x } + 
 \tau^2 \frac{\partial^2 b}{\partial x^2} 
+ \frac{\partial ^2 b}{\partial y ^2}  +
  2 \tau \frac{\partial^2 b}{\partial x \partial y} \right).
\]
As $b(x,0)=0$, the first two terms inside the parentheses are zero.
For the third term, we have, as mentioned just above Theorem \ref{sgcpthm1}, 
$\partial_y b = c(1+4a^2)$, so 
\[
\frac{\partial ^2 b}{\partial y ^2}  = \frac{\partial c}{\partial y}(1 + 4a^2)  + c\frac{\partial a^2}{\partial y},
\]
where $c(x,0) = \kappa(x)/2$ and $a(x,0) = \tau(x)/2$. We need an expression for $\partial_y c$, which we 
can obtain from the integrability condition $\dd \alpha + \alpha \wedge \alpha =0$
for the Maurer-Cartan form.  The $\mathfrak{k}$ part is 
\[
-\frac{\partial \bar U_\mathfrak{k}^t}{\partial z} - \frac{\partial U_\mathfrak{k}}{\partial \bar z} + [U_\mathfrak{p}, -\bar U_\mathfrak{p}^t ] = 0,
\]
from which we obtain 
\[
\frac{\partial c}{\partial y} = -\frac{\partial d}{\partial x} + b.
\]
From \eqref{dbexpressions} we have $d(x,0) = \kappa(x)\tau(x)/2$, and so 
$\partial_y c (x_0,0) = -\kappa^\prime(x_0) \tau(x_0)/2$.  
Hence, assuming $\kappa (x_0) = 0$,
\beqas
\eta^2 \mu(x_0) &=&-2\left(  \frac{\partial ^2 b}{\partial y ^2} (x_0,0)  + 
       2 \tau  \frac{\partial^2}{\partial x \partial y} b(x_0,0)) \right)\\
  &=& = -2 \left(  -\frac{1}{2} \kappa^\prime(x_0) \tau(x_0)\left(1+ \tau^2(x_0)\right)  +
			 \tau  \kappa^\prime(x_0)\left(1+\tau^2(x_0)     \right) \right) \\
		&=& \kappa^\prime (x_0) \tau (x_0) (1+\tau^2(x_0)).
 \eeqas
Finally, $\mu= -2b$ and  $\det(\hbox{Hess}(\mu) )=  4(b_{xx} b_{yy} - b_{xy}^2)$.  Thus, at $(x_0,0)$
\[
\det(\hbox{Hess}(\mu) )= -4 \left( \frac{\partial^2 b}{\partial_x \partial y}\right)^2 =
  -  \left(  \kappa^\prime(1+\tau^2)\right)^2.
\]
Now the solution has a degenerate singularity at $x_0$ if and only if $\kappa(x_0) = 0$.  Given this,
we have $\eta^2 \mu(x_0) \neq 0$ and $\det (\hbox{Hess}\, \mu (x_0)) <0$ if and only if $\kappa^\prime(x_0) \neq 0 \neq \tau(x_0)$. This proves the theorem.
	\end{proof}

	\subsection{The geometric Cauchy problem for non-regular singular sets}
	Now consider the geometric Cauchy problem for the case that $f_0: J \to \real^3$ is a real analytic map but not necessarily a regular curve.  In this case we cannot assume that $f_0(x)$ is unit speed. We assume that
	$\dd f$ has rank at least $1$, so that $f_x$ and $f_y$ cannot both vanish.  We again choose coordinates
	so that $f(x,0)=f_0(x)$.  Since we have already considered the case that $f_x$ is non-vanishing, the only
	new non-degenerate singularities are at points where $f_x$ vanishes. Therefore, we assume that $f_y$ 
	is non-vanishing, and  coordinates are chosen such that $f_y(x,0)$ has length $1$. Hence we can 
	choose a frame $F$ such that
	\[
	f_y = \Ad_F e_2, \quad \quad N= \Ad_F e_3.
	\]
	We can now do the computations as before, writing $U_\mathfrak{p}=(i a e_1 + (-1 + ib) e_2)/2$,
	so that $f_x = \Ad_F(-a e_1 -b e_2)$, and, for a singular curve, we must have $a(x,0)=0$, with
	non-degeneracy condition $a_y \neq 0$.
	Writing $U_\mathfrak{k} = (c+i d)e_3 /2$,  the harmonic map equations \eqref{harmeqns2} give
	$d=-bc$ and $\partial_y a = c(1+b^2)$, and we end up with, along $y=0$,
	\[
	U_{\mathfrak{p}}= \frac{-1+ib}{2} e_2, \quad U_\mathfrak{k}-\bar U_\mathfrak{k} = ce_3,
	\]
	with non-degeneracy condition $c \neq 0$.  From this, the following general solution to the non-degenerate
	singular geometric Cauchy problem follows:
	\begin{theorem}  \label{thmgeneral}
	All non-degenerate singularities on spherical frontals are locally obtained either by Theorem \ref{sgcpthm1}
	or by the DPW method with  potential
	\[
	\hat \eta =  \left(\frac{-1+i b(z)}{2}e_2 \lambda +  c(z) e_3   
	    + \frac{-1-i  b(z)}{2} e_2 \lambda^{-1}  \right) \dd z,
	\]
	where $b(z)$ and $c(z)$ are holomorphic extensions of real analytic functions $b$ and $c$,  $\real \to \real$
	satisfying $c(x) \neq 0$.  The singular curve $f_0(x)=f(x,0)$ satisfies
	$|f_0^\prime(x)| = |b(x)|$, and is:
	\begin{enumerate}
	\item a cone point if and only if $b \equiv 0$;
	\item
	diffeomorphic to a swallowtail at $(x_0,0)$ if and only if
	$b(x_0)=0$ and $b^\prime(x_0) \neq 0$;
	\item diffeomorphic to a cuspidal butterfly at $(x_0,0)$ if and only if 
	  $b(x_0)=b^\prime(x_0)=0$ and $b^{\prime \prime} (x_0) \neq 0$;
	\item  \label{thmgeneral_4}  a cuspidal edge if $b(x) \neq 0$.   In this case  we have
\[
b(x) = \frac{1}{\tau(x)}, \quad  \quad c(x) = \frac{\kappa(x)}{\tau(x)}.
\]
	\end{enumerate}
	\end{theorem}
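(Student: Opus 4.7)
The plan is to mirror the derivation leading to Theorem \ref{sgcpthm1}, but with the roles of $f_x$ and $f_y$ exchanged; the ``either/or'' alternative in the statement is a straightforward dichotomy, since if $f_x$ does not vanish at the singular point we are already in the setup of Theorem \ref{sgcpthm1}, whereas if $f_x$ vanishes then the standing assumption $\mathrm{rank}\,df \geq 1$ forces $f_y\neq 0$, which is exactly the setup that produced the stated potential.  The potential formula itself is essentially already derived in the paragraph preceding the statement:  substituting the specialised values $U_\mathfrak{p} = \tfrac{-1+ib}{2}e_2$ and $U_\mathfrak{k}-\bar U^t_\mathfrak{k}=ce_3$ along the singular curve into the boundary Maurer--Cartan expression \eqref{F0mcform} and taking holomorphic extension in $z$ gives $\hat\eta$, and uniqueness of the resulting frontal follows from the Bj\"orling-type argument of \cite{bjorling} exactly as in Theorem \ref{sgcpthm1}.

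With the choice of frame $f_y=\Ad_F e_2$, equation \eqref{frame1} gives $f_x = \Ad_F(-ae_1-be_2)$, and since $a(x,0)\equiv 0$ along the singular curve, one reads off $f_0'(x)=-b(x)\Ad_F e_2$ and hence $|f_0'(x)|=|b(x)|$. Item (1) follows immediately: $b\equiv 0$ iff $f_0'\equiv 0$ iff the singular set collapses to a point.

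The crucial step is to apply the singularity-recognition criteria.  Since $df(\partial_x + b\,\partial_y) = f_x + b\,f_y = \Ad_F(-ae_1 - be_2 + be_2) = 0$ at $a=0$, the null vector field along the singular set is $\eta(x)=\partial_x + b(x)\partial_y$.  Computing $f_x\times f_y = -a\,\Ad_F e_3$, the signed area density is $\lambda=-a$, so non-degeneracy $d\lambda\neq 0$ reduces via the harmonic-map identity $\partial_y a = c(1+b^2)$ (noted just above the theorem) to the standing hypothesis $c\neq 0$; the front condition is automatic since $f_y = \Ad_F e_2$ never vanishes.  Parametrising the singular curve by $\gamma(t)=(t,0)$, one has $\det(\gamma'(t),\eta(t))=b(t)$, and the Izumiya--Saji--Takahashi / Kokubu--Rossman--Saji--Umehara--Yamada criteria \cite{izsata,krsuy} then immediately yield cuspidal edge iff $b(x_0)\neq 0$, swallowtail iff $b(x_0)=0\neq b'(x_0)$, and cuspidal butterfly iff $b(x_0)=b'(x_0)=0\neq b''(x_0)$.

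For the remaining content of case (4) one must compute the Frenet invariants of the regular curve $f_0$ directly from the frame.  Assuming $b>0$ (the other sign being analogous), the unit tangent is $T=-\Ad_F e_2$; reading $F^{-1}F_x|_{y=0}= -e_2+ce_3$ off from $\hat\alpha|_{\lambda=1}$, one gets $T_x = c\,\Ad_F e_1$, so dividing by the speed $s'=b$ gives $dT/ds=(c/b)\Ad_F e_1$, hence $\kappa=c/b$ and curve normal $N_{cv}=\Ad_F e_1$.  The binormal is then $B=T\times N_{cv}=\Ad_F e_3 = N$, and the same derivative computation yields $dB/ds = -(1/b)N_{cv}$, so $\tau=1/b$; inverting gives $b=1/\tau$ and $c=\kappa/\tau$.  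The main technical effort is nothing deeper than this bookkeeping of adjoint-action derivatives and the careful verification of the non-degeneracy and front hypotheses; no idea beyond those already deployed in Theorems \ref{sgcpthm1} and \ref{beaksthm} is required.
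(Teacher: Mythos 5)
Your proposal is correct and follows essentially the same route as the paper: the potential is read off from the frame computation preceding the statement, the cases are distinguished via the null vector field $\eta=\partial_x+b\,\partial_y$ and the recognition criteria of \cite{krsuy}, \cite{izumiyasaji} and \cite{fukuihasegawa}, and item (4) is a direct Frenet computation from $f_x=\Ad_F(-ae_1-be_2)$. You in fact supply more detail than the paper does, notably the explicit derivation of $\kappa=c/b$ and $\tau=1/b$, which the paper dismisses as ``a direct computation.''
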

\begin{proof}
The form of the potential $\hat \eta$ follows from the discussion above. By definition, a cone singularity is a 
non-degenerate singularity where the whole singular curve maps to a single point, which, in this case means
$b \equiv 0$.  Along the singular curve, the null direction is given by 
$\eta= \partial _x + b \partial_y$.
By a criterion in \cite{krsuy}, the singular curve is a cuspidal edge if and only if $\eta$ is transverse to the singular set, 
i.e.~if and only if $b \neq 0$, and a swallowtail at $x_0$ if and only if this transversality condition fails to first order at 
$x_0$, that is $b(x_0)=0$ and $b^\prime(x_0) \neq 0$. 
By Theorem 3.1 of \cite{fukuihasegawa}  (see also \cite{izumiyasaji}), a front is diffeomorphic to
a cuspidal butterfly at $p$ if and only if $\eta \mu (p) = \eta^2 \mu (p) = 0$ and 
$\eta^3 \mu (p) \neq 0$.  Here $\mu=a$ and it is simple to check that this condition
reduces to  $b(x_0)=b^\prime(x_0)=0$ and $b^{\prime \prime} (x_0) \neq 0$.

 The formulae for $b$ and $c$ at item \eqref{thmgeneral_4} follow by a direct computation
from $f_x = \Ad_F(-a e_1 -b e_2)$.
\end{proof}
	We remark that, in the case of cuspidal edges, if the torsion is non-vanishing, then this theorem gives an alternative
	representation to Theorem \ref{sgcpthm1}. Here the singular curve has speed $1/|\tau |$, 
	rather than unit speed.  Four examples are shown in Figure \ref{figure5}.
		
	\begin{figure}[ht]
\centering
$
\begin{array}{cccc}
\includegraphics[height=29mm]{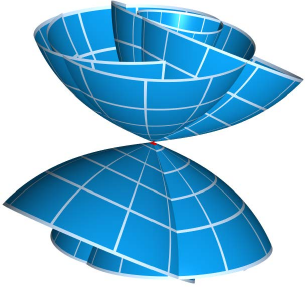}   \, & \,
\includegraphics[height=29mm]{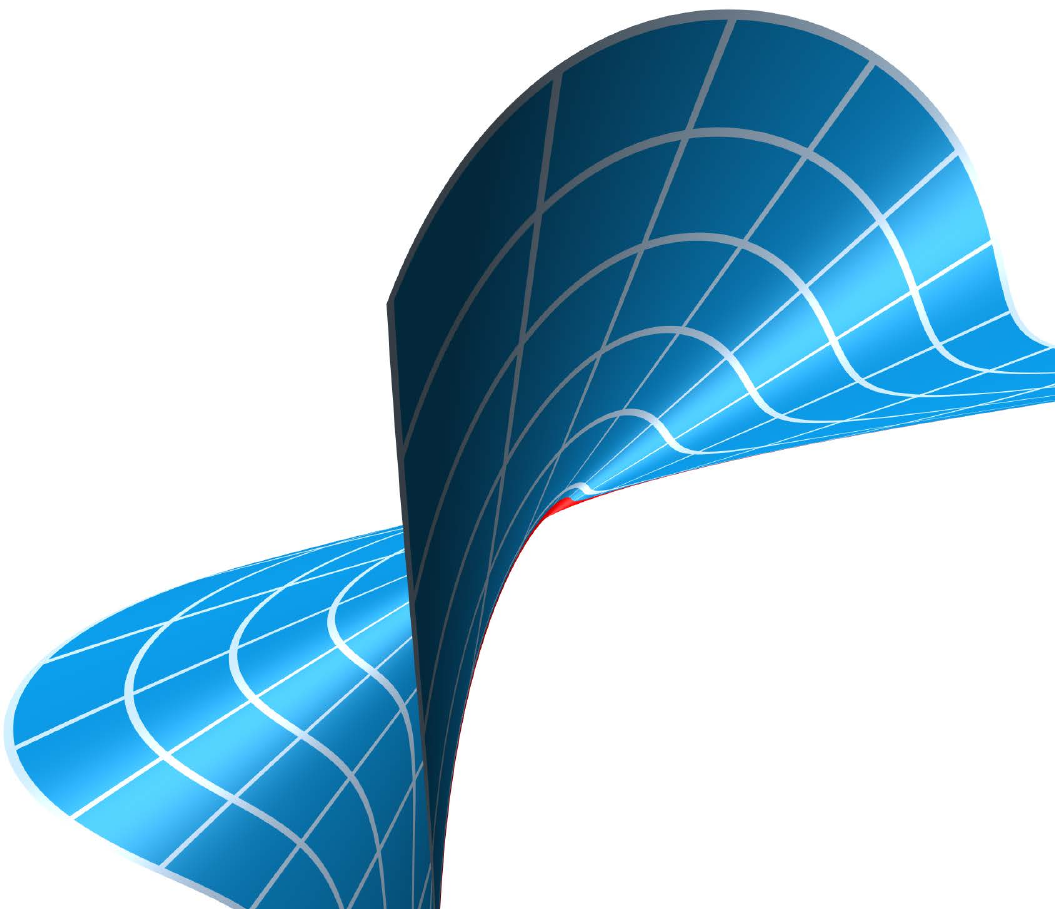}  \,& \, 
\includegraphics[height=29mm]{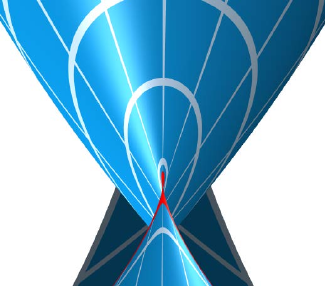}    \,& \,
\includegraphics[height=29mm]{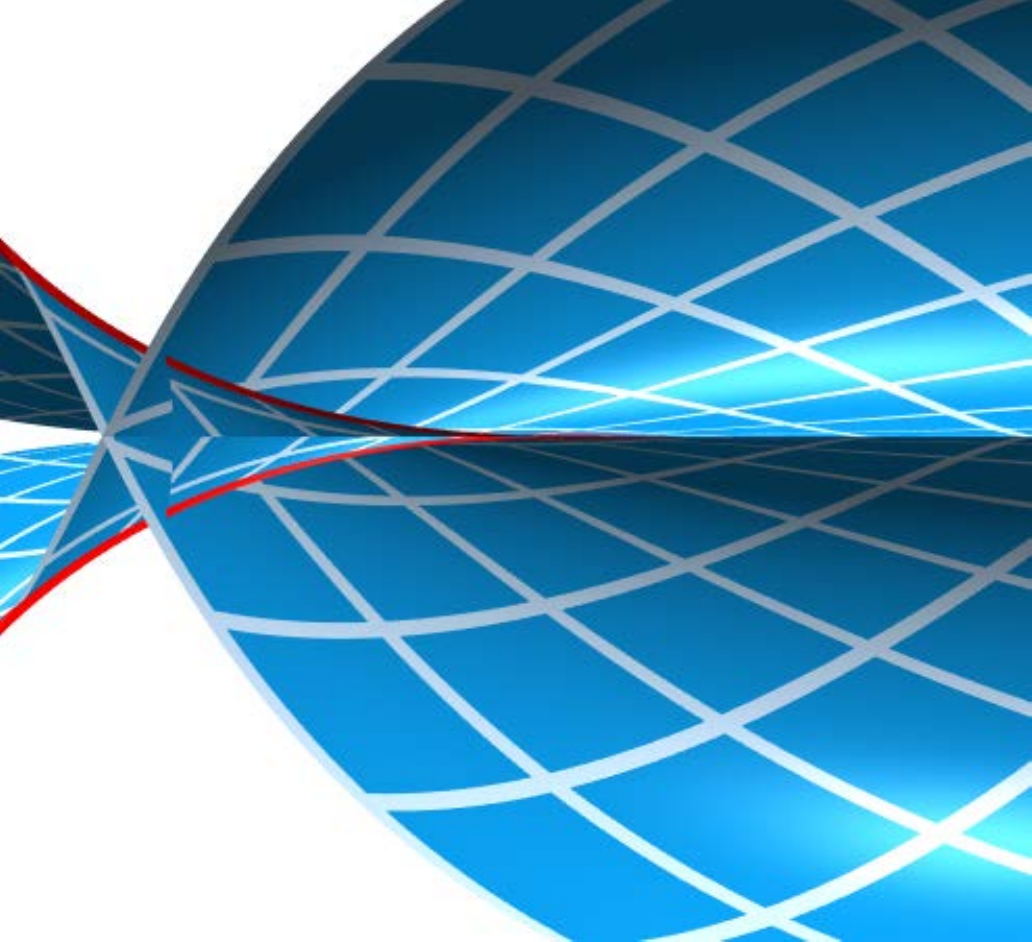}  \\
 c(x)=1+0.5\cos(x) &  (b(x),c(x))=(x^2,1) & (b(x), c(x))=(x,1) & (b(x),c(x))=(x,x)
\end{array}
$
\caption{Cone, cuspidal butterfly, swallowtail  and a degenerate singularity.}
\label{figure5}
\end{figure}

\subsection{The singular geometric Cauchy problem in terms of the unit normal}  \label{conessection}
G\'alvez et al.~\cite{ghm} studied spherical surfaces with isolated singularities. 
They use the term ``isolated'' from the point of view of the surface in $\real^3$,
rather than in the coordinate domain, so their notion of isolated includes a cone point.
Their results
are stated in terms of the unit normal. For example, they show (Theorem 12) that  an immersed
 conical singularity (i.e.~a cone point) is an embedded isolated singularity if and only if 
the unit normal along the singular curve is a regular strictly convex Jordan curve in $\SSS^2$,
and conversely such a curve in $\SSS^2$ gives rise to an embedded isolated singularity.
We can use Theorem \ref{thmgeneral} to compute the solution corresponding to such a curve
if we state it in terms of the normal:  with the same choices of coordinates and frame as above,
i.e.~$N=\Ad_F e_3$ and $f_y=\Ad_F e_2$, we have
\[
N_x = \Ad_F[U_\mathfrak{p}-\bar U_{\mathfrak{p}}^t, e_2] =  -\Ad_F e_1,
\]
so $x$ is also the arc-length parameter for $N$. Differentiating again we have
\[
N_{xx} = -\Ad_F[e_2-c e_3, e_1]  = -N - c N \times N_x.
\]
Hence $c(x)$ is the geodesic curvature function of the curve $N: \real \to \SSS^2$, and Theorem
\ref{thmgeneral} can be restated to the effect that, given an arbitrary analytic unit speed curve $N(x)$ in  $\SSS^2$ with non-vanishing geodesic curvature $c(x)$, any choice of analytic function $b(x)$ will generate a harmonic map $N(x,y)$ such that the associated spherical frontal has a non-degenerate singular curve along $y=0$. The choice $b\equiv 0$
gives the unique immersed conical singularity, and, according to \cite{ghm}, this is an embedded isolated singularity if and only if 
$N(x)$ is a closed curve.

	\begin{figure}[ht]
\centering
$
\begin{array}{ccc}
\includegraphics[height=28mm]{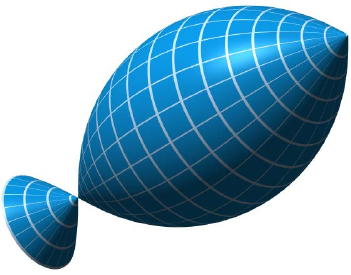}  \quad & \quad
\includegraphics[height=28mm]{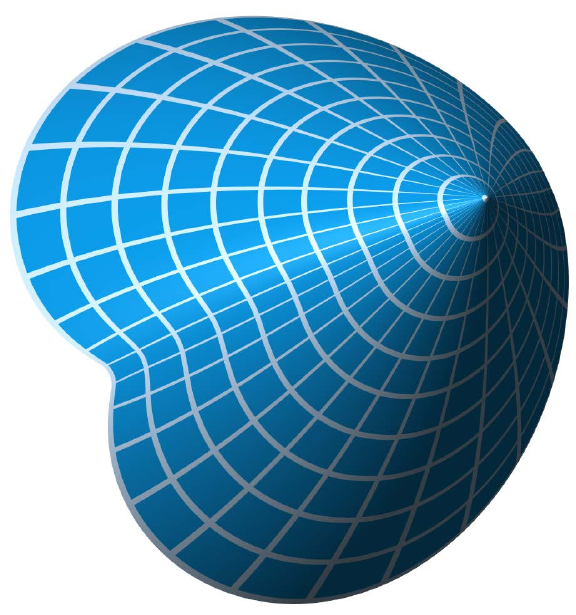}  \quad & \quad
\includegraphics[height=28mm]{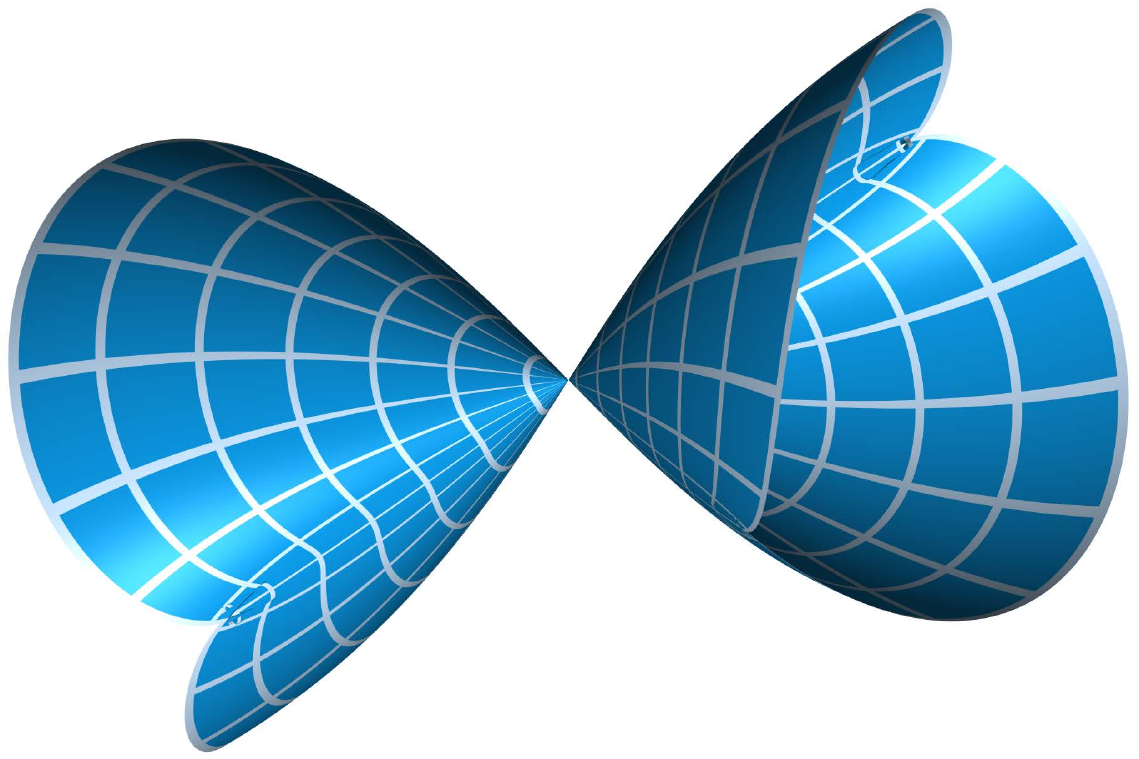}  
\end{array}
$
\caption{Examples of embedded cone singularities.}
\label{figurecones}
\end{figure}

For example, taking $N(s) = (R \cos(s/R), R \sin(s/R), \sqrt{1-R^2})$, a circle of radius $R$ in $\SSS^2$, the geodesic
curvature is $c= \sqrt{1-R^2}/R$, and the solution surface is a rotational ``peaked sphere''.  The peakiness is determined
by the radius $R$, i.e.~how close the normal at the cone point is to pointing towards the relevant pole of the sphere. For large $|c|$,
the solution approaches a chain of spheres, and for $c=0$ the solution degenerates to a straight line.  The case $c=1$,
corresponding to $N(s) = (1/\sqrt{2})( \cos(\sqrt{2}s), \sin(\sqrt{2}s), 1)$,
is shown in Figure \ref{figurecones} on the left. The embedded cone point shown to the right and middle was
obtained by perturbing this curve a small amount, retaining the closed property and the non-vanishing of the geodesic
curvature.

\section{Generating spherical surfaces from curves}  \label{generatingsection}
\subsection{Generating different surfaces from a single curve}
Given a real analytic space curve that is not a straight line, there are, of course, infinitely many different 
spherical surfaces containing this curve, determined by a choice of surface normal along the curve.
Moreover, one can say that we  have  three \emph{canonical} ways to produce a 
spherical surface from the curve: the unique spherical surfaces that contain the curve as a geodesic and as a cuspidal
edge, and the parallel spherical surface to the CMC $1/2$ surface that contains the curve as a geodesic.

Figure \ref{figure8} shows four different surfaces containing the plane curve defined by the curvature
function $\kappa(s)=1-s^4$:  the spherical surface and the unique constant negative curvature $-1$ surface (see \cite{singps})
that contain the curve as a cuspidal edge (which degenerates when $s=\pm1$), and the spherical surface and CMC $1/2$
surface that contain the curve as a geodesic.  
	\begin{figure}[ht]
\centering
$
\begin{array}{cc}
\includegraphics[height=25mm]{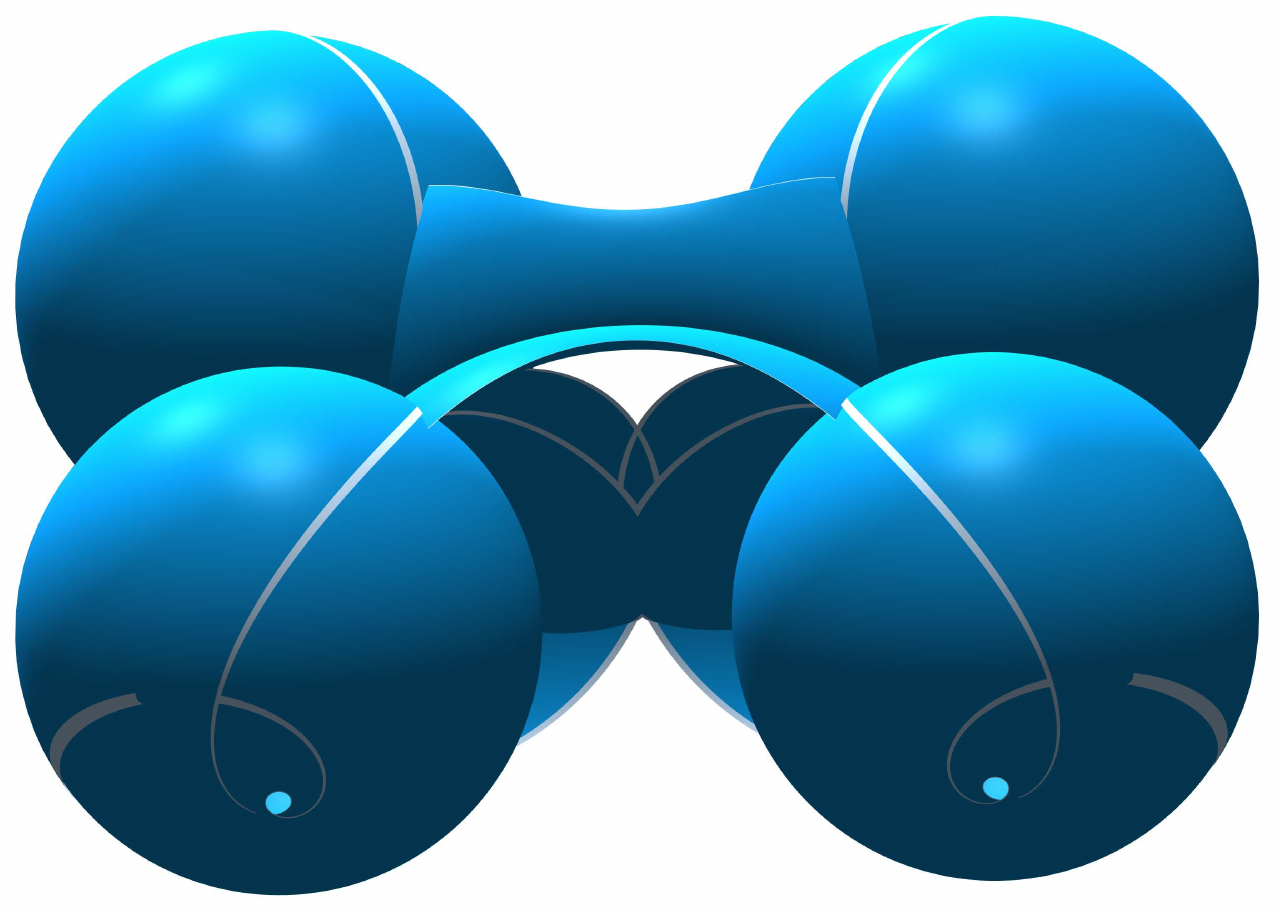}  &  \, \, 
\includegraphics[height=25mm]{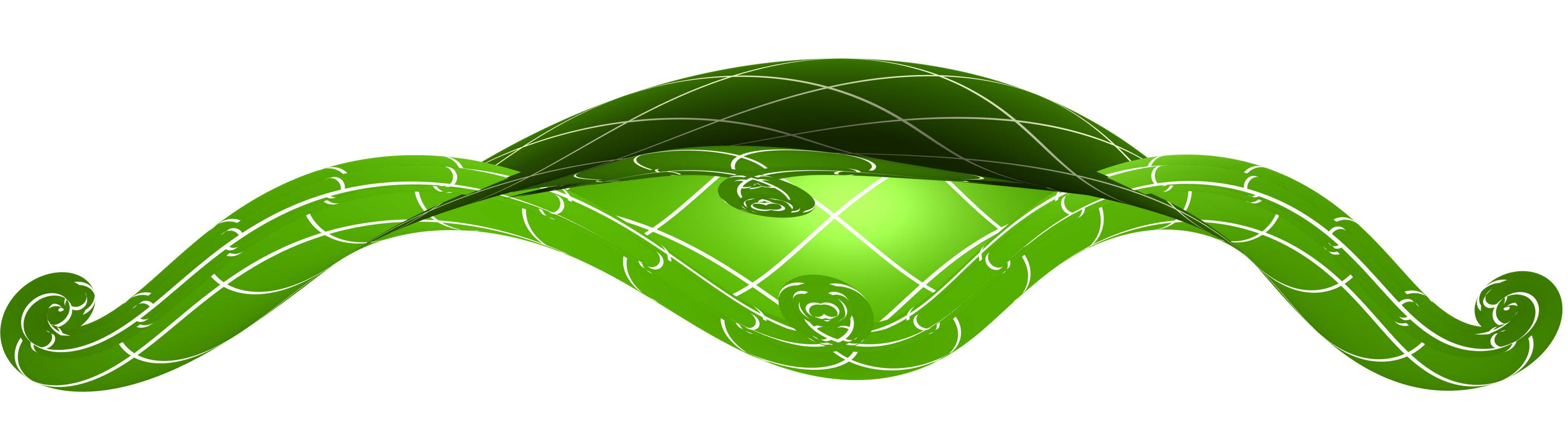}  
\end{array}
$
\vspace{3ex} \\
$
\begin{array}{ccc}
\includegraphics[height=28mm]{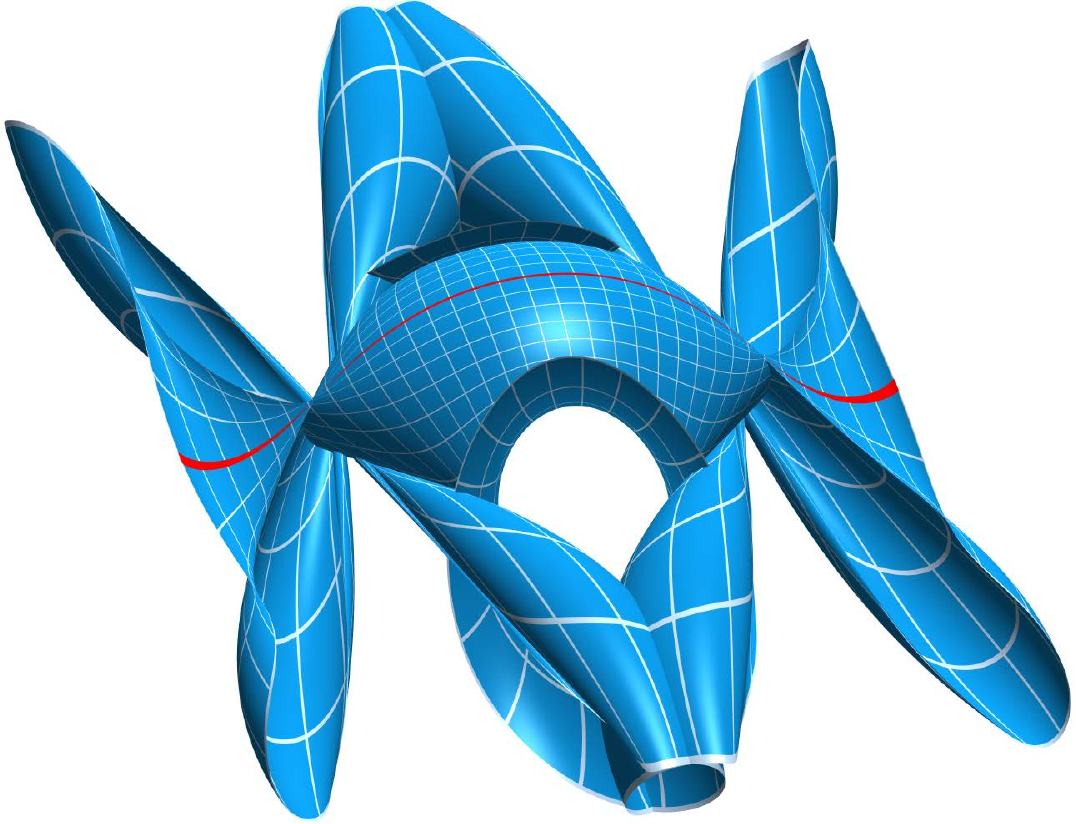} \quad   & \quad 
\includegraphics[height=28mm]{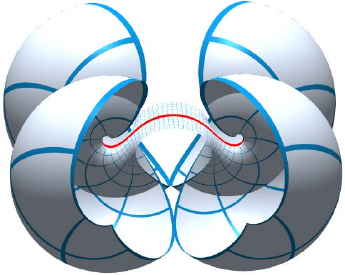}  \quad   & \quad 
\includegraphics[height=28mm]{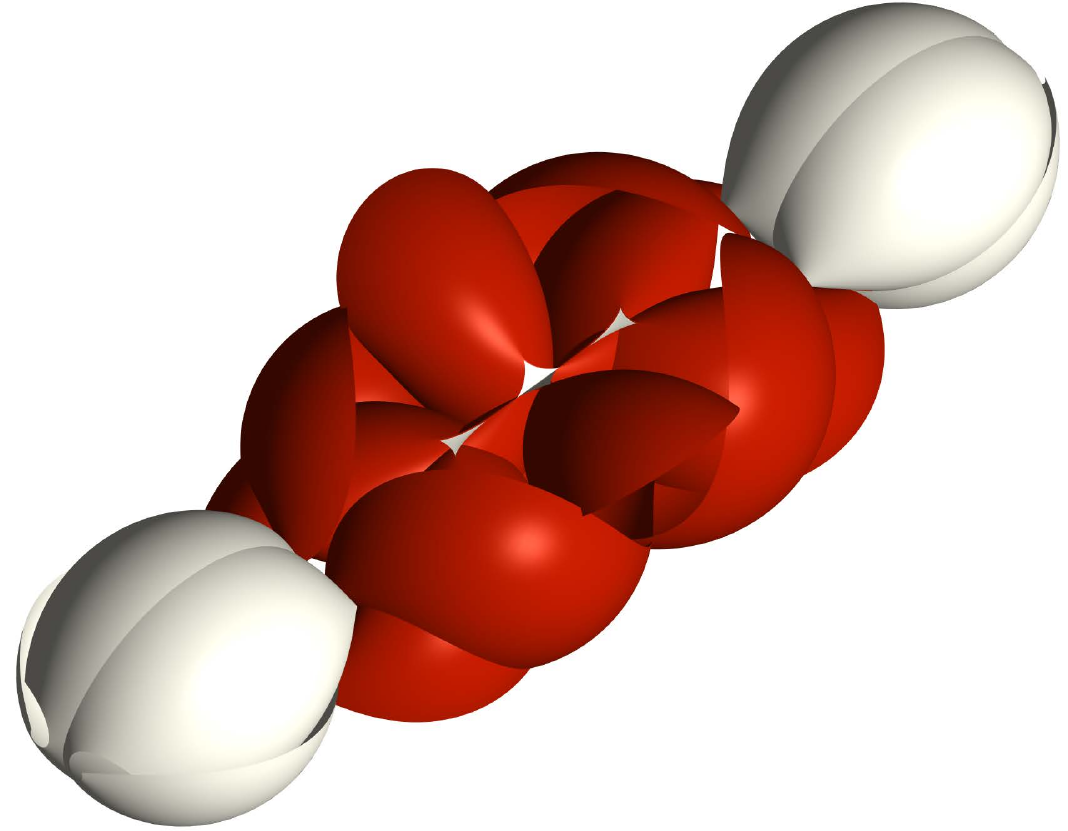} 
\end{array}
$
\caption{Surfaces generated by the plane curve with curvature function $\kappa(s)=1-s^4$. Top: spherical
and pseudospherical with the curve as a singular curve. Bottom:  spherical (left) and CMC $1/2$ (middle) 
with the curve as a geodesic.
Right: the \emph{parallel} spherical surface to the middle surface. See also Figure \ref{figure0}.}
\label{figure8}
\end{figure}

\subsection{Images of some surfaces discussed earlier}
In Figures \ref{figure9} and \ref{figure10} we display larger regions of some of the surfaces discussed earlier in the text.
The captions of the figures identify the relevant examples.

	\begin{figure}[ht]
\centering
$
\begin{array}{ccc}
\includegraphics[height=26mm]{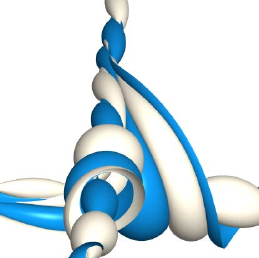}  \quad & \quad
\includegraphics[height=26mm]{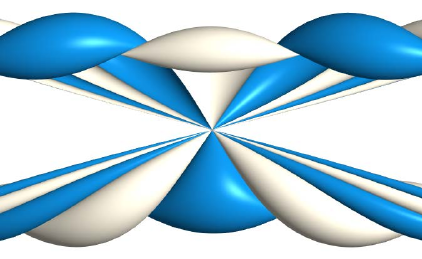}    \quad & \quad
\includegraphics[height=26mm]{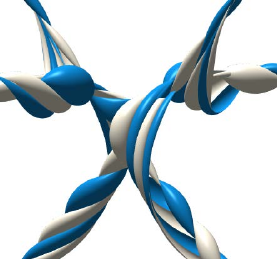} 
\end{array}
$
\vspace{3ex} \\
$
\begin{array}{ccc}
 \includegraphics[height=34mm]{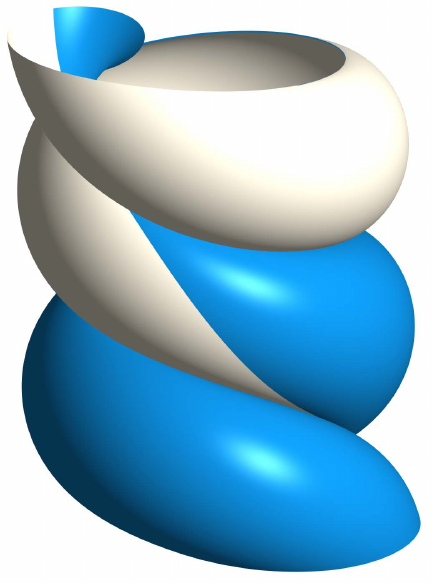} \quad & 
\includegraphics[height=34mm]{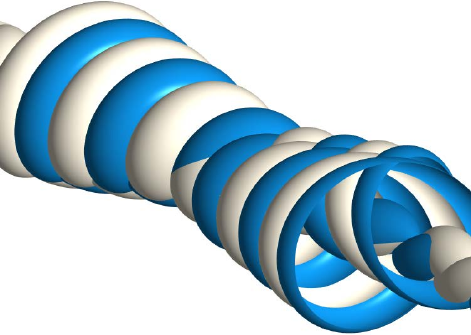}    \quad & 
\includegraphics[height=34mm]{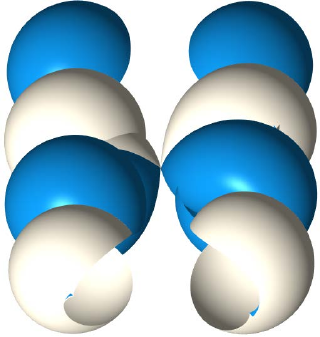}  
\end{array}
$
\caption{Top: the three spherical surfaces from Figure \ref{figure2}.
Bottom: The non-orientable spherical surface from Example \ref{nonorientableexample},
and the solution of the singular GCP for $\kappa(s)=s$, $\tau(s)=0$ (right).
}
\label{figure9}
\end{figure}

	\begin{figure}[ht]
\centering
$
\begin{array}{cc}
\includegraphics[height=40mm]{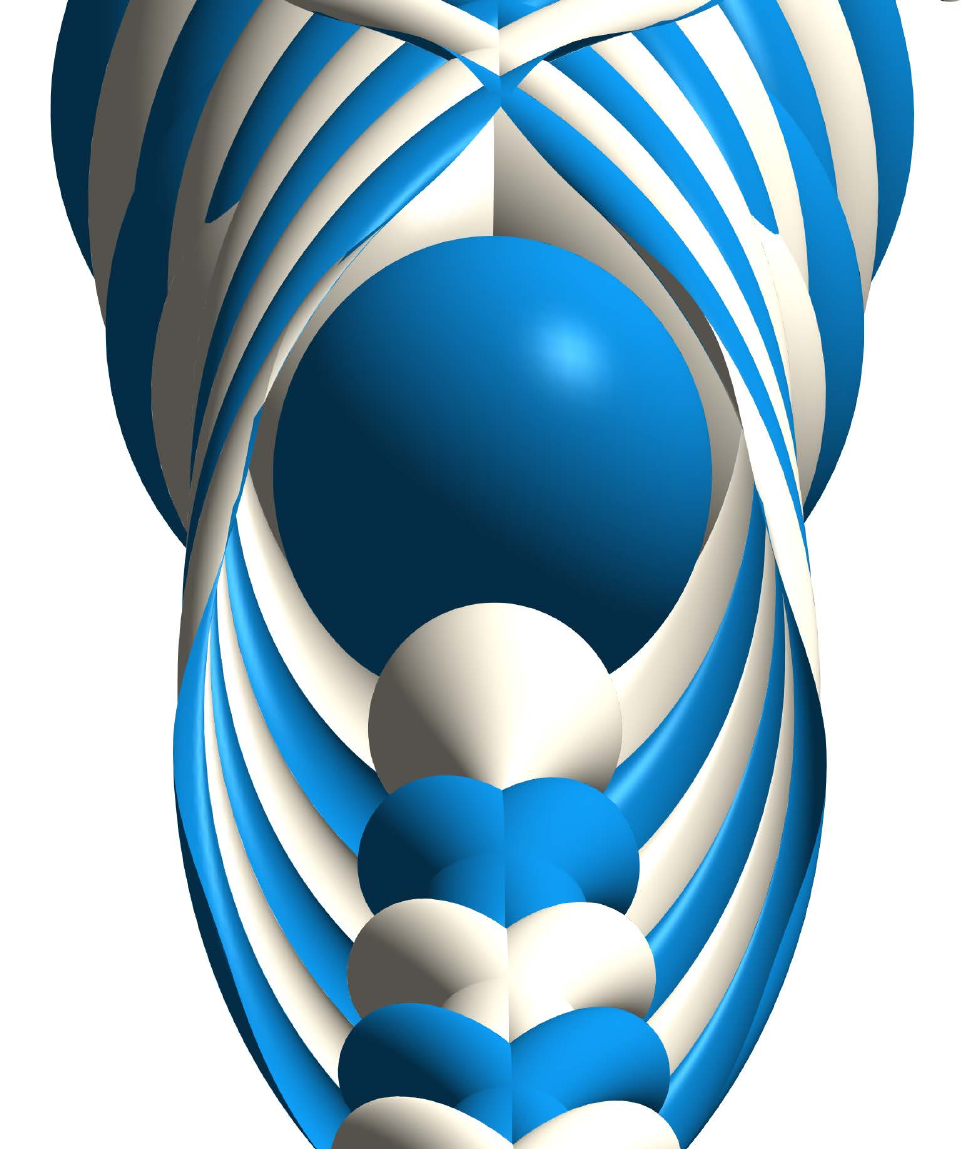} \quad  \quad & \quad 
\includegraphics[height=40mm]{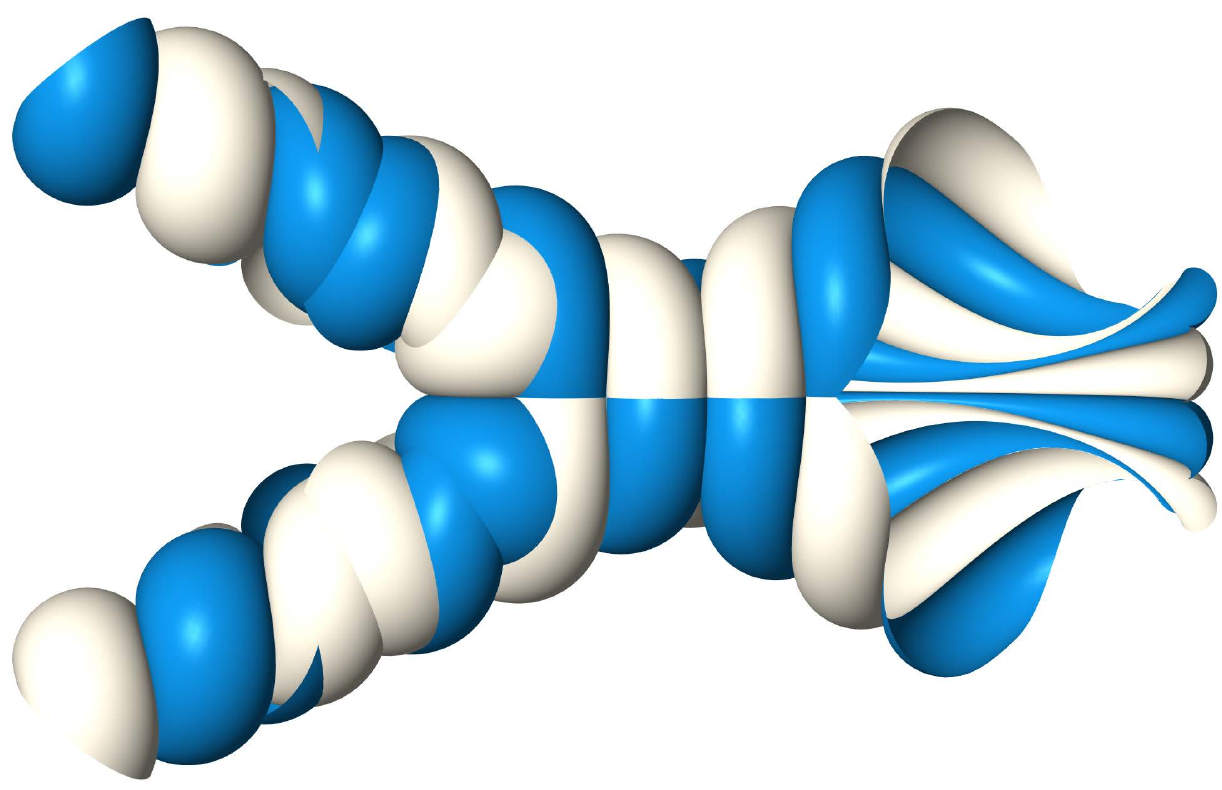}   
\end{array}
$
\caption{The spherical surfaces generated by Theorem \ref{thmgeneral} with $b(x)=x$, $c(x)=1$ (left), and
$b(x)=c(x)=x$ (right). See also Figure \ref{figure5}.}
\label{figure10}
\end{figure}

\subsection{External symmetries and closing experiments}
We saw in Section \ref{symmetriessection} how to obtain all spherical surfaces with a finite order
rotational symmetry about an axis that intersects the surface (at the fixed point on the surface).
 Another type of rotation is about an axis \emph{external} to the surface.  We can obtain these
using the solutions to the geometric Cauchy problem, choosing Cauchy data that has the required
symmetry.  We have already seen surfaces of revolution.  An example with a finite order
symmetry,  generated by an astroid curve is shown in Figure \ref{figureother}
to the left. 

It is of course simple to produce topological cylinders using the GCP, by choosing a closed initial
curve with periodic normal.  On the other hand, 
Figure \ref{figureother} shows, to the right, two examples
that are experimentally found to be ``closed'' in the $y$ direction.  They are generated 
by planar cuspidal edge curves, with, respectively, curvature functions $\kappa(t)=a\cosh(b t)$
and $\kappa(t)=c\sinh(d t)$, for experimentally found values of $a$, $b$, $c$ and $d$. 
Understanding under what conditions the surface closes up in the $y$-direction 
is an interesting problem that needs attention.

	\begin{figure}[ht]
\centering
$
\begin{array}{ccc}
\includegraphics[height=35mm]{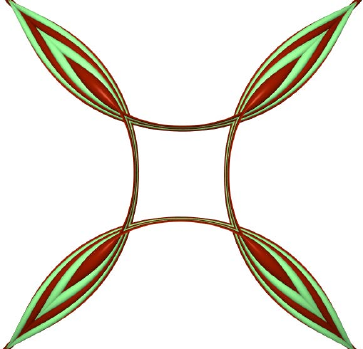}  \quad & \quad
\includegraphics[height=35mm]{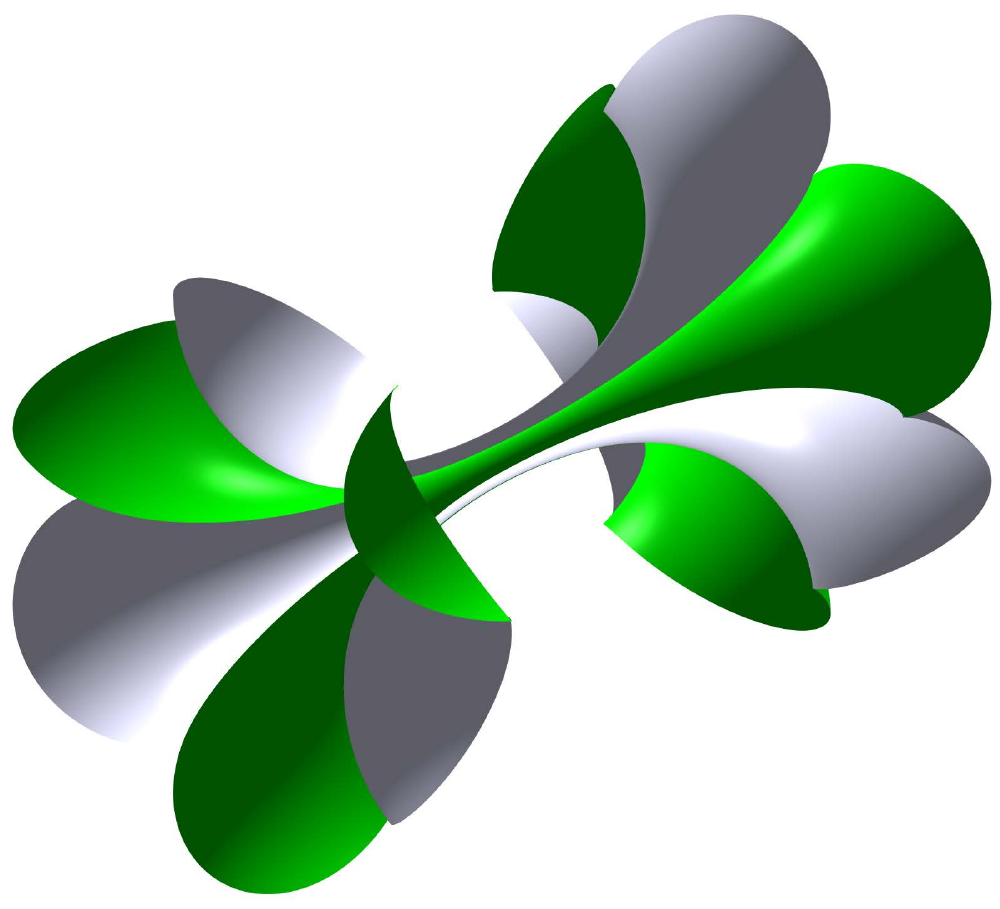}   \quad & \quad
\includegraphics[height=35mm]{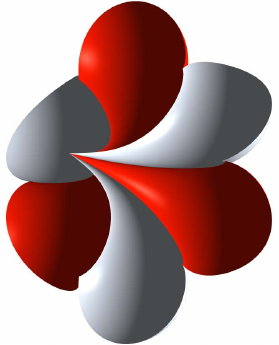}    
\end{array}
$
\caption{Left: example with external point of symmetry.  Middle, right: experimentally found ``cylindrical'' surfaces.
}
\label{figureother}
\end{figure}

\section{Conclusions and further questions}
 We have given two convenient tools for producing examples of spherical surfaces
 with prescribed geometric properties: 
 a method for producing all spherical surfaces with finite order rotational symmetries,
and solutions of the geometric Cauchy problems.  We have produced several
somewhat global-scale images in order to convey an idea of what spherical frontals in $\real^3$ look like.
They differ from constant negative curvature 
surfaces not only in their rounded appearance, but also because the cuspidal edges tend to be invisible as they protrude as ridges  on the ``inside'' rather than the ``outside'' of the surface. 
 For this reason, the coloring used in Figures \ref{figureB} - \ref{figureA},
 \ref{figure9}, \ref{figure10}, \ref{figureother} and \ref{figuretest},
 namely a binary color map that changes color when a cuspidal edge is crossed,
is quite informative for the global scale images. We conclude with some questions raised in this work:

	\begin{figure}[ht]
\centering
$
\begin{array}{cc}
\includegraphics[height=30mm]{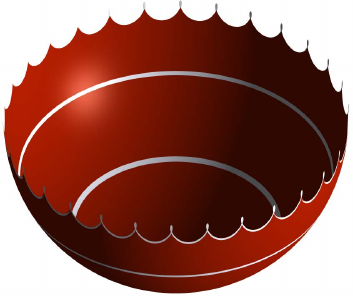}  \quad &  \quad
\includegraphics[height=30mm]{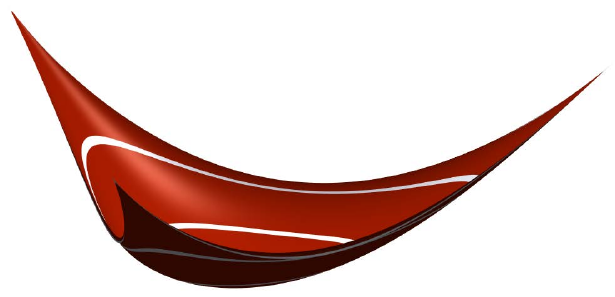}   \,\,\,
\includegraphics[height=30mm]{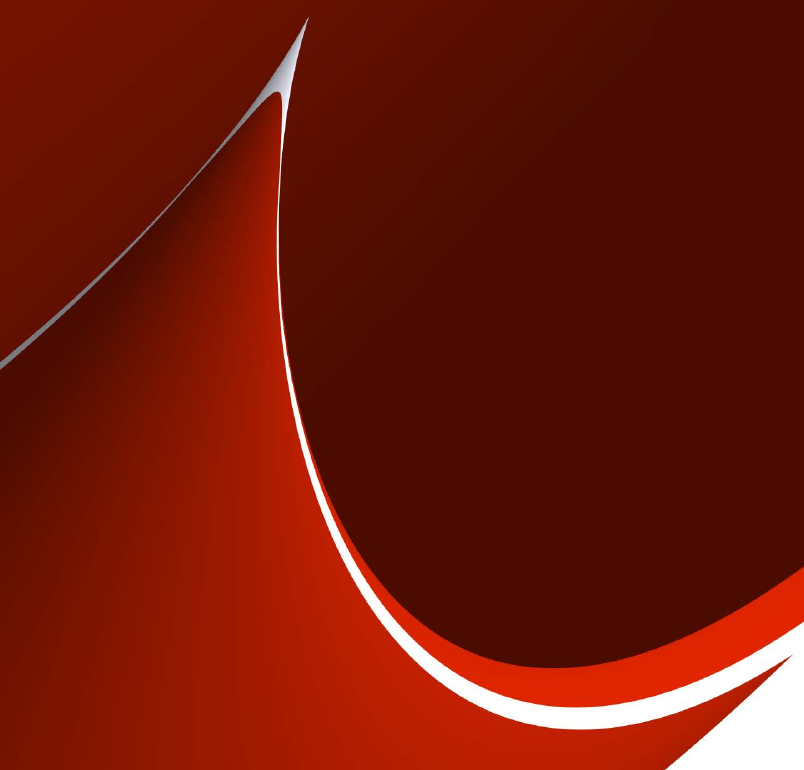}  \vspace{1ex}  \\
(1+z^{30}, \,  \,z^{28}) & (1+z^3, \, z) 
\end{array} 
$ 
\caption{Symmetric constant curvature topological discs, together with their natural boundaries.}
\label{figurediscs}
\end{figure}

	\subsection{Embedded  symmetric topological discs and their natural boundaries}
As we showed in Section \ref{symmetriessection},
if we consider a normalized potential of the form $(a(z), b(z)) = (1 + z^n, \, z^{n-2})$, for $n \geq 3$,
we obtain, on an open set around $z=0$, a constant curvature immersion, the image of
which has a rotational symmetry of order $n$.  We can consider the largest open set that is immersed,
and the boundary of this set as the ``natural boundary'' of an immersed spherical surface. 
This boundary does not have any isolated points because one can easily strengthen 
the statement of Lemma \ref{regularitylemma} to show that the surface has a branch point at \emph{any} point $z$
if and only if $a(z)=b(z)=0$.  Thus, this boundary is a curve, or a collection of curves.  
Two examples are shown in Figure \ref{figurediscs}.  Having computed the solution for several different values of
$n$, it seems clear that one always obtains, as maximal immersed neighbourhood, an
embedded topological disc. The boundary of the image is a closed
curve with $n$ cusp points, and, of course, has an order $n$ rotational symmetry. The cusp points
correspond to swallowtail singularities.  This curve does not correspond to a round circle
in the parameter domain, as can already be seen
in the close-up on the right in Figure  \ref{figurediscs}, where one can see the image of a constant 
$r$ curve, in $(r,\theta)$ polar coordinates, crossing
the cuspidal edge. 

Investigating further, we compute now the solution for $(a(z), b(z)) = (1 + z^3,  c \, z)$, for various
values of $c$, displayed  in Figure \ref{figuretest}.  We also plot the domain for each surface, underneath,
and both plots are coloured according to the sign of the Gauss curvature of the parallel CMC surface.
The singular curves are where the colour changes.   In the last two examples, the maximal immersed
surface around the central point has non-trivial topology.   These observations raise the question of
whether one can predict the type of singularities, and the topology of maximal immersed subdomains
for a spherical surface with a given (perhaps special) normalized potential. 

\begin{figure}[ht]
\centering
$
\begin{array}{cccc}
\includegraphics[height=24mm]{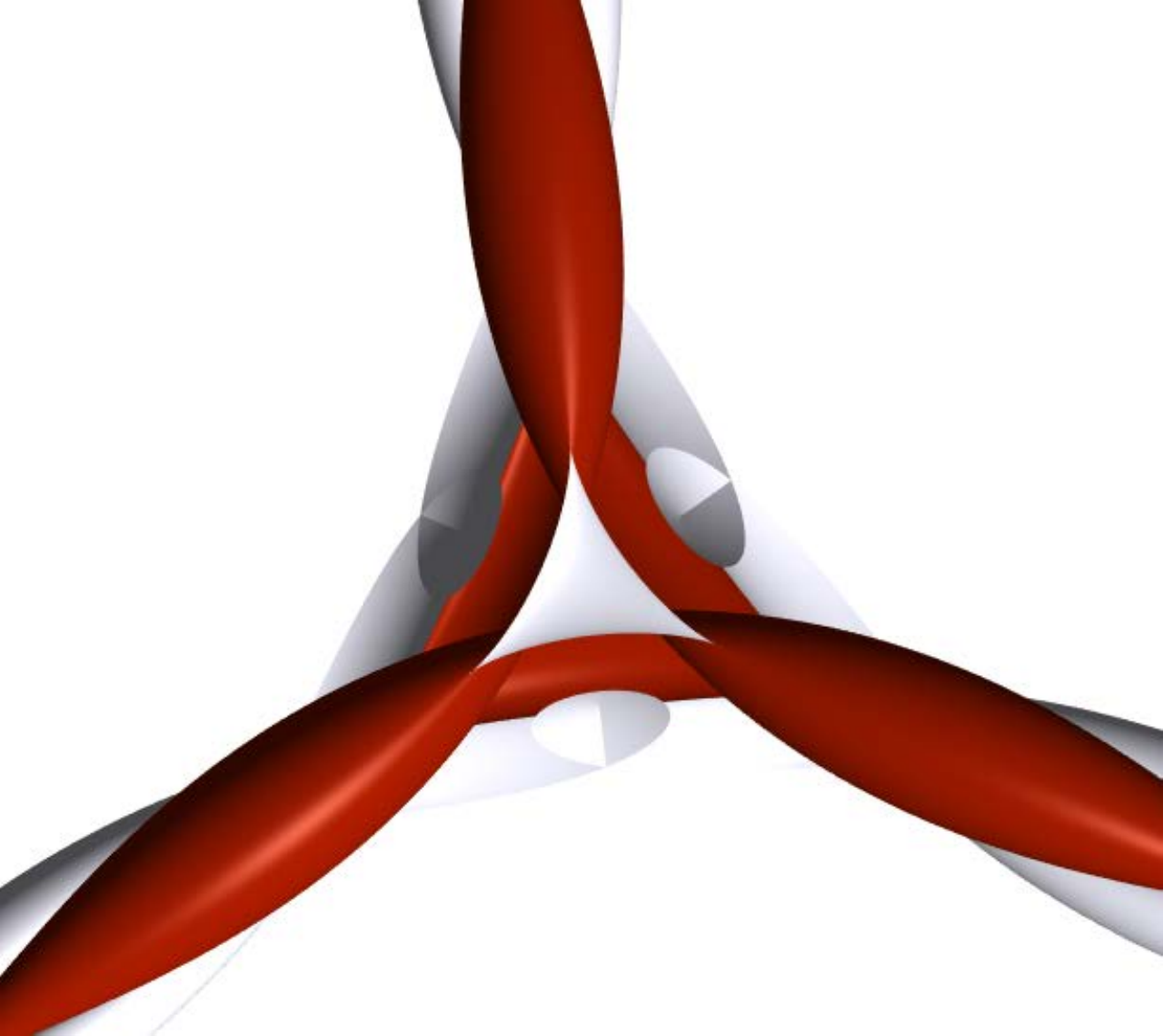}  \quad &  \quad
\includegraphics[height=24mm]{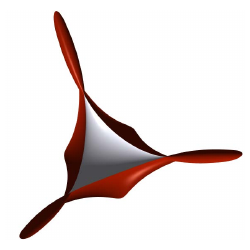}   \quad &  \quad
\includegraphics[height=24mm]{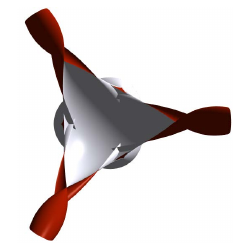}   \quad &  \quad
\includegraphics[height=24mm]{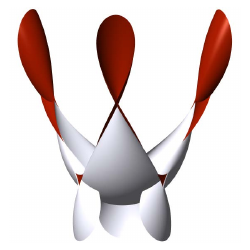}     \vspace{1ex} \\
\includegraphics[height=24mm]{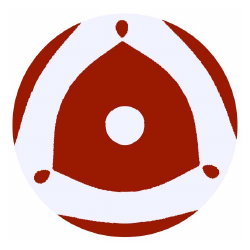}    \quad &  \quad
\includegraphics[height=24mm]{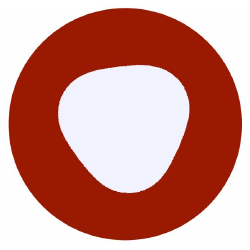} \quad & \quad
\includegraphics[height=24mm]{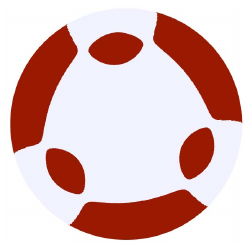}   \quad &  \quad
\includegraphics[height=24mm]{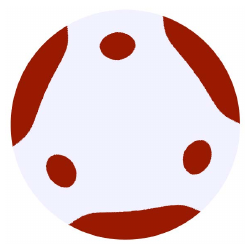}    \vspace{1ex}   \\
  (1+z^3, 5 \, z)  & (1 + z^3, \, z)  &  (1+ z^3, z/3)  &  (1+ z^3, z/4)
\end{array}
$
\caption{Topology of the maximal immersed set for some examples.}
\label{figuretest}
\end{figure}

\subsection{Spherical surfaces contained in bounded subsets of $\real^3$}
In related work on constant \emph{negative} curvature surfaces \cite{singps}, one observes
 the existence of  frontals that appear to be completely contained in a bounded region
of $\real^3$.  The pseudospherical surface generated by the planar singular curve with curvature function
$1-s^4$ in Figure \ref{figure8} is one such example, and other planar curves which satisfy
$|\kappa| \to \infty$ as $|t| \to \infty$ appear to behave the same way: namely, the surface spirals inwards in 
both the $x$ and $y$ directions, with many singularities.  At the experimental level, it
seemed to be quite easy to find pseudospherical frontals that are contained in a bounded set, and this 
sometimes makes computing
the solutions more rewarding.  For the case of spherical surfaces, if one takes the surface parallel to a CMC torus 
(and CMC tori have been extensively investigated), then this will also be a torus, with singularities, and hence
bounded. However tori are few and far between, and it did not seem to be easy to find initial curves
for spherical surfaces that generate a bounded solution.

\subsection{The bifurcations of one parameter families}
Ishikawa and Machida \cite{ishimach} proved that the generic (or stable)
singularities for constant curvature surfaces are cuspidal edges and swallowtails.  A singularity 
(speaking at the level of map germs) is  generic within a class of surfaces if ``nearby'' surfaces of
the same class necessarily have the same kind of singularity.  It is intuitive what this means if one
considers  Theorem \ref{thmgeneral}, which locally generates all 
non-degenerate solutions to the singular geometric Cauchy problem from  pairs of real-analytic
functions $(b(t),c(t))$.  The non-degeneracy condition is $c(t) \neq 0$. The solution is a cuspidal
edge around a point $(x_0,0)$ where $b(x_0) \neq 0$. Perturbing the data slightly, nearby
pairs of functions will also be non-vanishing at $x_0$, so a cuspidal edge is generic. 
Similarly, the surface is a swallowtail at a point where $b$ vanishes to first order, and this
also is a generic condition.

After the generic singularities, the next most important singularities are the \emph{bifurcations}
in generic one parameter families of surfaces of the relevant class.  The bifurcations for generic
families of fronts are classified in \cite{arnoldetal}.  They are known as cuspidal lips, cuspidal beaks,
cuspidal butterflies and $3$-dimensional $D_4^+$ and $D_4^-$ singularities (Figure \ref{figurestandard}).
They are respectively given around the point $(0,0)$  by the map germs
 $(u,v) \mapsto (3u^4+2u^2v^2, u^3+uv^2,v,)$,   $(u,v) \mapsto (3u^4-2u^2v^2, u^3-uv^2,v)$, 
 $(u,v) \mapsto (4u^5+u^2v, 5u^4+2uv,v)$,   $(u,v) \mapsto  (uv, u^2 + 3v^2, u^2v  + v^3)$, 
 $(u,v) \mapsto  (uv, u^2 - 3v^2, u^2v  - v^3)$.

	\begin{figure}[ht]
\centering
$
\begin{array}{ccccc}
\includegraphics[height=24mm]{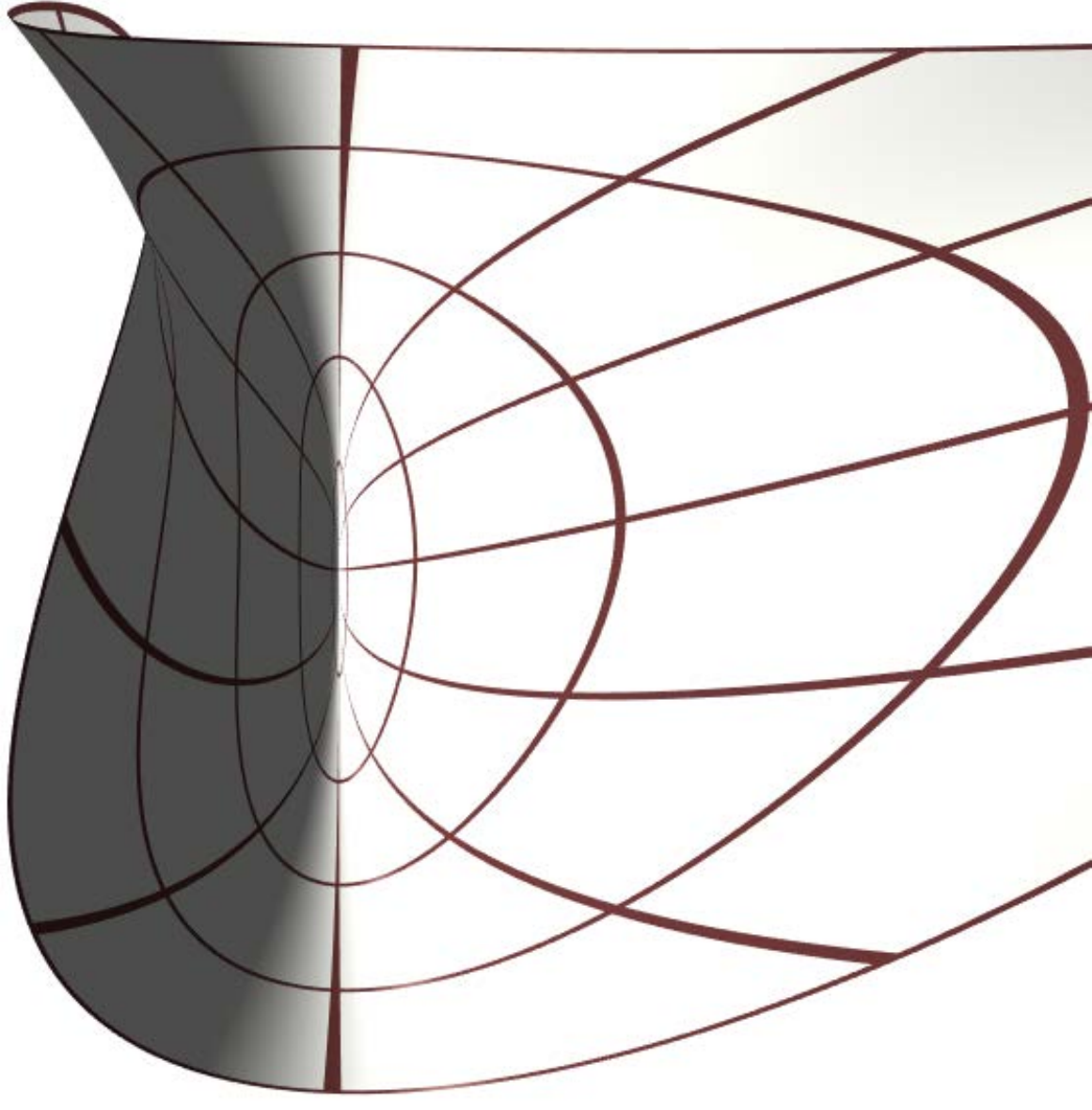}   \, \, & \,\,\,
\includegraphics[height=24mm]{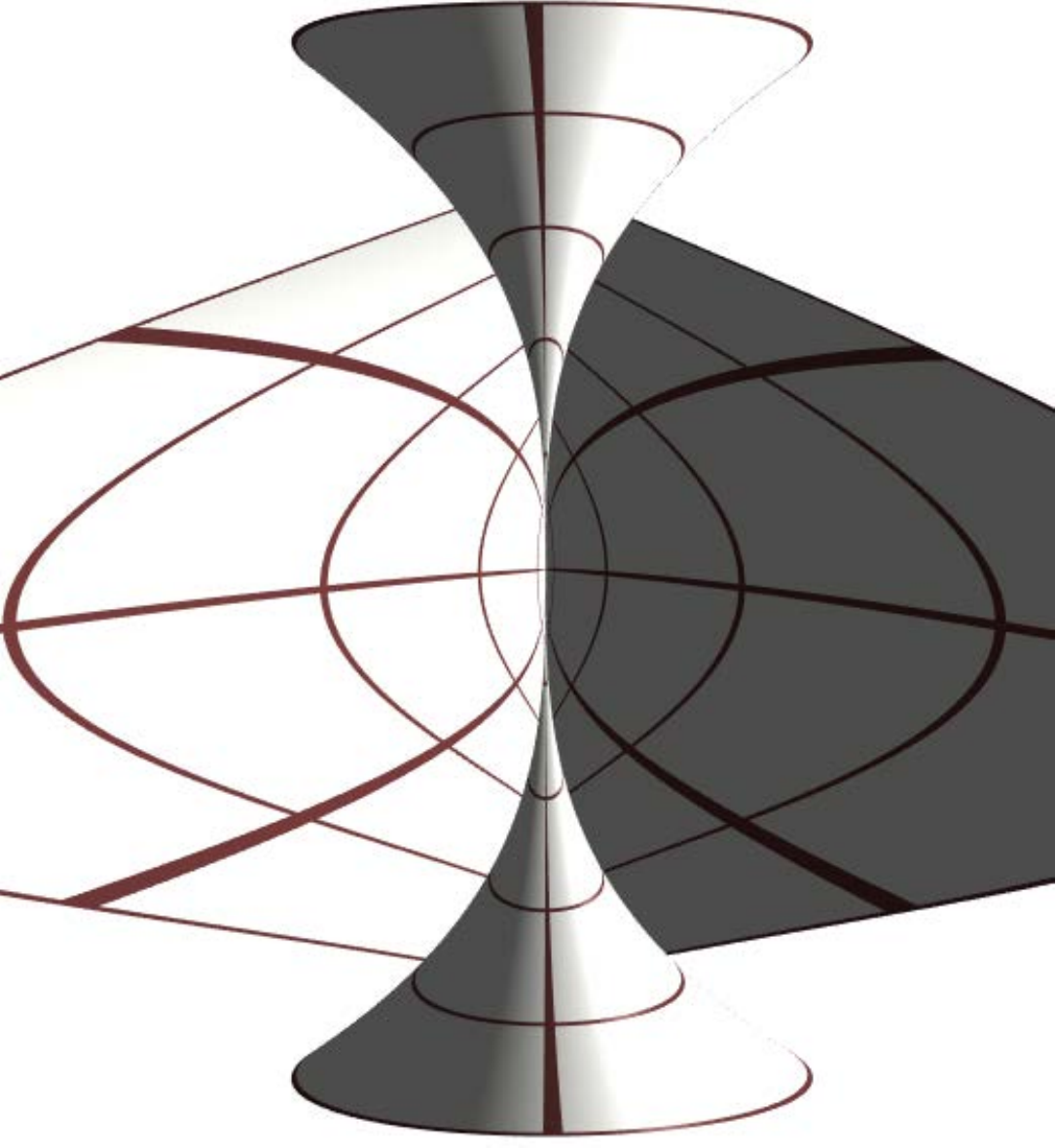}  \, \, & \,\,\,
\includegraphics[height=24mm]{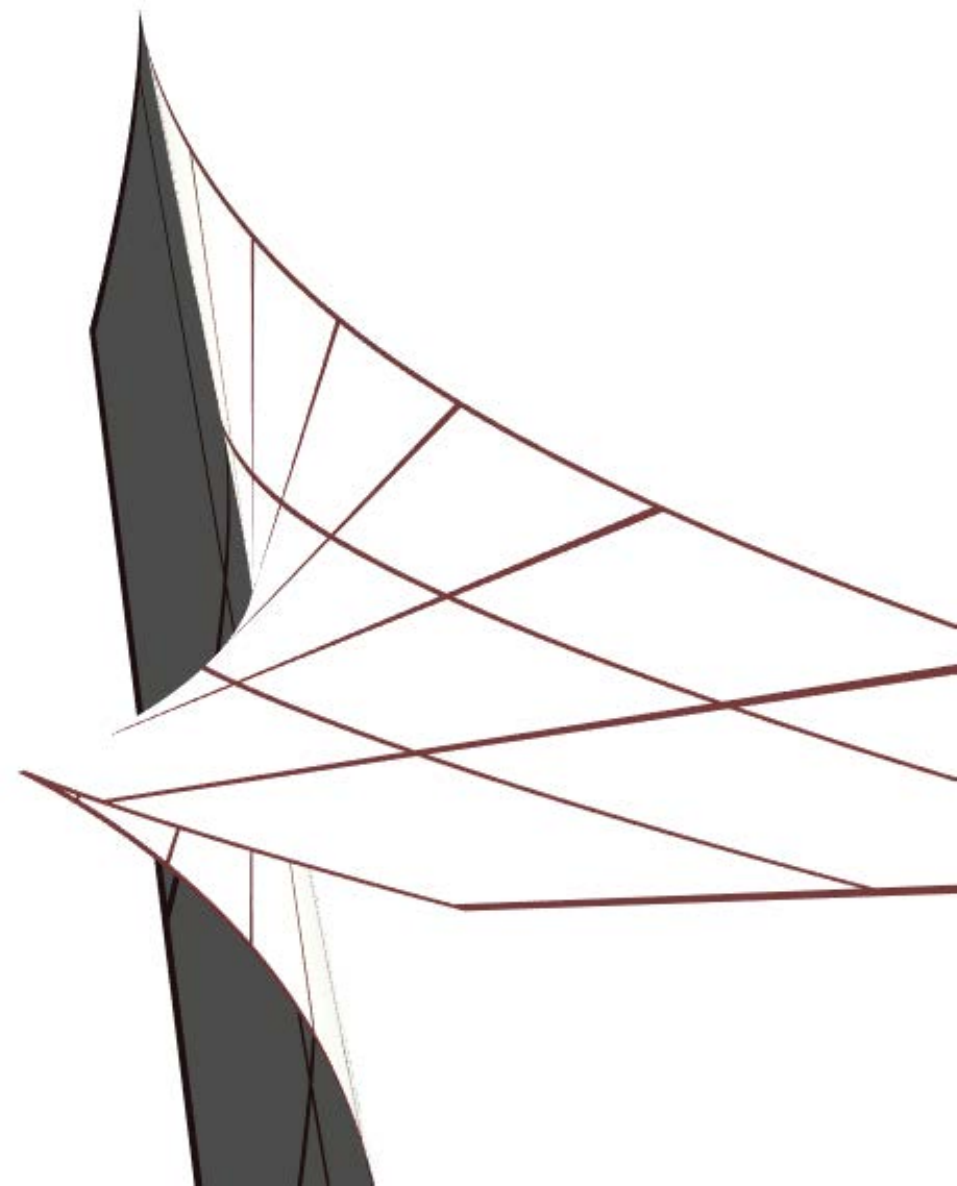}    \,& \,\,
\includegraphics[height=24mm]{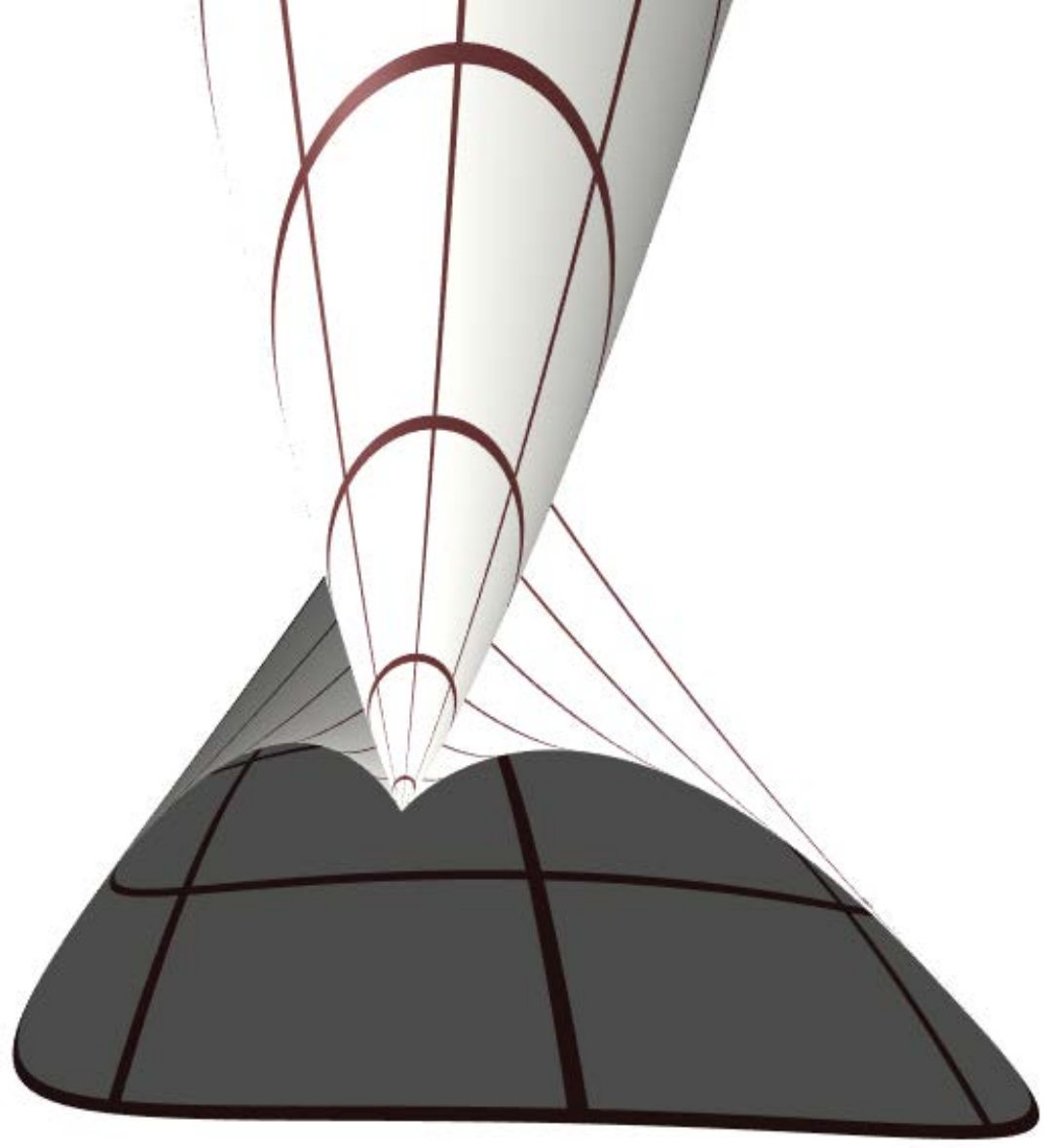} \,& \,
\includegraphics[height=24mm]{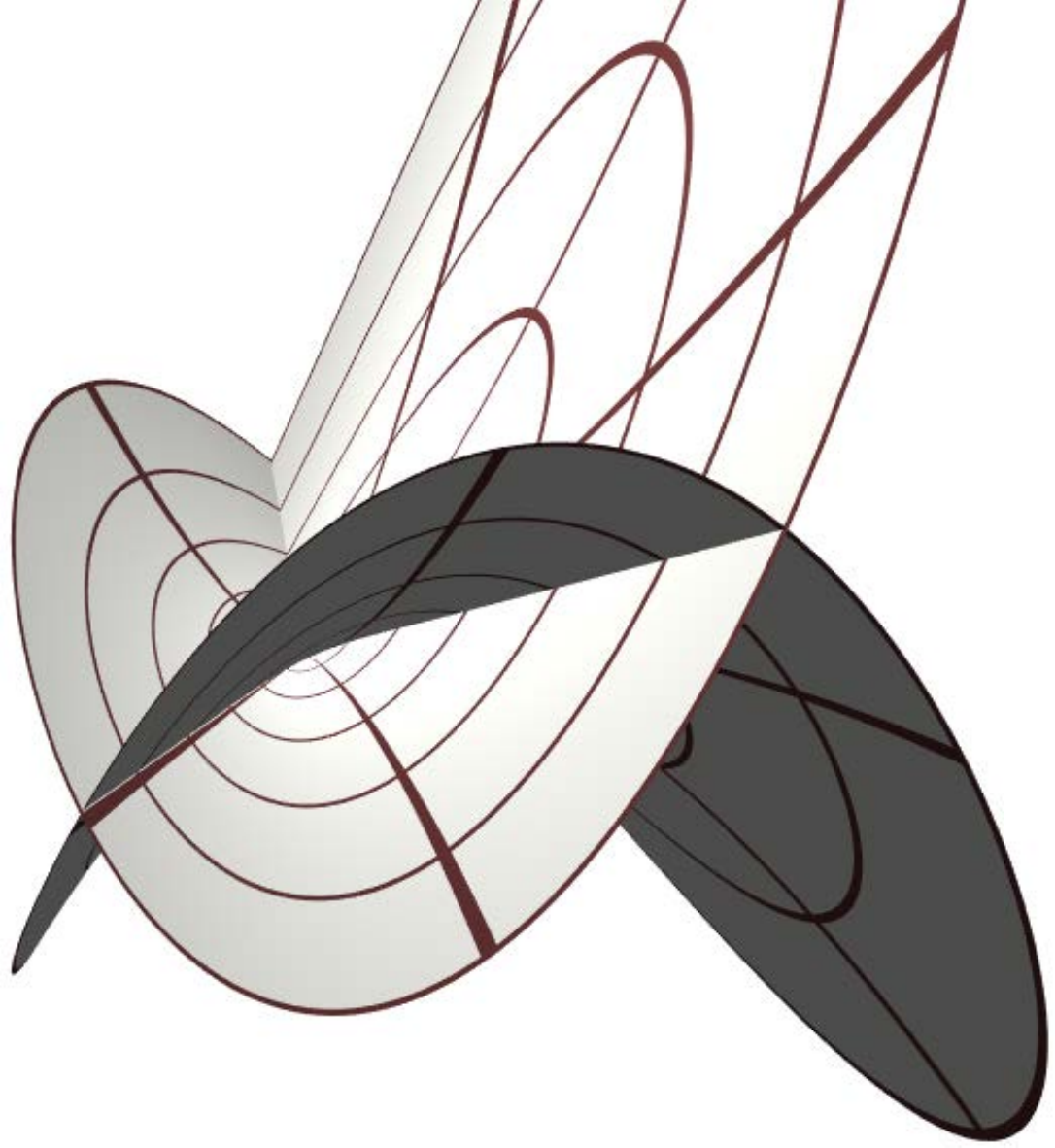}  \\
\end{array}
$
\caption{In order: standard cuspidal lips, cuspidal beaks, cuspidal butterfly, and $3$-dimensional $D_4^+$ and $D_4^-$ singularities. Only the second two can occur on spherical frontals.}
\label{figurestandard}
\end{figure}

The cuspidal lips has an isolated singularity at $(0,0)$, but the derivative has rank $1$ at this
point.  Hence this singularity cannot occur on a spherical frontal, by Lemma 
\ref{regularitylemma}.  The derivative of the $D_4^+$ singularity is
zero at $(0,0)$, but the singularity is not isolated, hence this singularity cannot occur either, by the
same lemma.  The $D_4^-$ singularity has the property that its derivative vanishes at $(0,0)$ but 
the derivative of its unit normal does not, and this is impossible for spherical frontals.
Hence, from this list, only cuspidal beaks and butterflies  occur on spherical frontals.

If we consider the generic geometric Cauchy data for the singular geometric
Cauchy problems  in Theorems \ref{thmgeneral} and  \ref{sgcpthm1}, and the singularities
one obtains when the data fails to first order to give a cuspidal edge or a swallowtail,
it seems plausible that the bifurcations of generic one parameter families of spherical fronts 
are cuspidal beaks,  cuspidal butterflies and the singularity given by Theorem  \ref{thmgeneral} with
data $(b(t),c(t))=(t,t)$.  However, this reasoning is based only on considering
families of the special potentials used to solve the singular GCP here, and therefore needs
some additional argument.

Finally, a remark on the experimental aspects of this work: 
some of the results of this article are proved for the purpose of providing tools
for computing concrete examples of spherical surfaces. Conversely, some of these
results have been proved \emph{after} observing them to be likely to be true through 
computations.  Examples are the criteria for cuspidal beaks and cuspidal butterflies:
these were observed experimentally and the singularities visually identified before
the analysis was done.
The code used to compute these solutions  was written in Matlab and can be found, 
at the time of writing, at:
\href{http://davidbrander.org/software.html}{http://davidbrander.org/software.html}.


\begin{appendices}

\section{The geometric Cauchy problem for CMC surfaces}  \label{appendix}
	The geometric Cauchy problem for CMC surfaces is solved in \cite{bjorling}. However, explicit formulae for the boundary potentials
	in terms of the Cauchy data are not given in that work. The method described there involves first finding an $SU(2)$ frame
	along the curve, a burden if one would like to compute many different examples.  We therefore derive here the boundary
	potentials directly in terms of the curvatures and torsion of the curve, to give a simple
	and convenient means for producing examples of CMC surfaces.
	
	From the discussion in Section \ref{parallelsection}, if $f$ is a spherical frontal with unit normal $N$, then $g=f-N$ is the parallel surface
	with constant mean curvature $H=1/2$, and satisfies
	$g_z = f_z -N_z = i N \times N_z -N_z$, i.e.~we consider the defining equation
	\beq  \label{CMCeqn}
	g_z = i N \times N_z -N_z.
	\eeq
	Since the integrability condition for this equation is again the harmonic map equation
	$ N \times N_{z \bar z}=0$, we can use the same loop group representation to obtain CMC surfaces, subtracting the unit normal
	$N$ from the previous Sym formula, to obtain: $\mathcal{SB}(\hat F) := \mathcal{S}(\hat F) - \Ad_F e_3$.
	
As with spherical surfaces, we assume given a real analytic curve $g_0(x) = g(x,0)$, and a prescribed normal 
$N_0(x)=N(x,0)$, and then derive the boundary potentials for the CMC $1/2$ surface $g$ satisfying this
Cauchy data in terms of the curvature and torsion of $g_0$.  The potentials are different from the spherical case
because the equation \eqref{CMCeqn} is different from \eqref{sphericalsurface}.  Again choosing a frame with
$N= \Ad_F e_3$, equation \eqref{CMCeqn} gives us
\[
\Ad_{F^{-1}} g_z = i U_\mathfrak{p} - [U_\mathfrak{p}, e_3], \quad \quad
\Ad_{F^{-1}} g_{\bar z} = i \bar U_\mathfrak{p} + [\bar U_\mathfrak{p}, e_3].
\]
We can assume that $g$ is conformally immersed, and can even assume that coordinates are chosen such that the
conformal factor is $1$ along $y=0$, so that $x$ is the arc-length parameter along this curve.
 Thus, we assume the frame is chosen satisfying
\[
g_x = \nu \Ad_F e_1, \quad \quad g_y = \nu \Ad_F e_2, \quad \quad \nu(x,0)=1,
\]
where $\nu: \Omega \to (0,\infty) \subset \real$ is real analytic.  Substituting this into the above equations,
we conclude that, along $y=0$,
\[
U_\mathfrak{p} = \frac{1}{2}(a+b i) e_1 + \frac{1}{2}(-b-1+ai)e_2,
\]
where $a$ and $b$ are real-valued functions.  
Next write $U_\mathfrak{k}-\bar U^t_\mathfrak{k} = c e_3$.
Differentiating $g_x = \nu \Ad_F e_1$, with $\nu(x,0)=1$, we obtain, along
$y=0$,
\[
g_{xx} = \Ad_F[U-\bar U^t, e_1]  = c \Ad_F e_2 + (b+1) \Ad_F e_3.
\]
Since $x$ is the arc-length parameter along this curve, we also have $g_{xx} = \kappa_g \Ad_F e_2 + \kappa_n \Ad_F e_3$,
where $\kappa_g$ and $\kappa_n$ are the geodesic and normal curvatures of $g_0$. Thus
$c=\kappa_g$ and $b=\kappa_n -1$.  To find $a$, differentiate $N= \Ad_F e_3$ to obtain
\[
N_x = \Ad_F[U-\bar U^t, e_3] = -a \Ad_F e_2 -(b+1) \Ad_F e_1,
\]
so $a=-\langle N_x, \Ad_F e_2\rangle = \langle N_x , g_x  \times N \rangle$.
Substituting $U_\mathfrak{p}$ and $U_\mathfrak{k}-\bar U_\mathfrak{k}^t$ into \eqref{F0mcform}, we obtain:
\begin{theorem}  \label{bjorlinthm}
Let $\gamma: J \to \real^3$ be a regular arc-length parameterized real analytic curve and suppose given an
 analytic vector field  $N:J  \to \SSS^2 \subset \real^3$  , satisfying $\langle \gamma^\prime(s), N(s) \rangle =0$.
Set 
\[
\mu := \langle \gamma^\prime\times N , N^\prime \rangle, \quad \kappa_g := \langle \gamma^{\prime \prime}, N \times \gamma^\prime \rangle, \quad
\kappa_n := \langle \gamma^{\prime \prime}, N \rangle.
\]
Then the unique solution to the geometric Cauchy problem for $\gamma$ and $N$ is given by the DPW method with
the holomorphic potential, given by the analytic extension of:
\beqas
 \hat \eta &=& \left( \frac{1}{2} ((\mu - (\kappa_n -1)i ) e_1 + (-\kappa_n - \mu i) e_2) \lambda + 
     \kappa_g e_3   \right.  \\
  &&  \left. + \frac{1}{2}((\mu+(\kappa_n-1)i)e_1 + (-\kappa_n + \mu i) e_2 ) \lambda^{-1} \right) \dd s.
\eeqas
\end{theorem}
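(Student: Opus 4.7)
The plan is to mirror the argument used to derive the spherical boundary potentials in Section \ref{gcpsection}, but replacing the defining equation \eqref{sphericalsurface} with its CMC counterpart \eqref{CMCeqn}. The key fact I will use is the boundary potential construction of Section \ref{bjorlingsection}: once the extended frame $\hat F_0(x) = \hat F(x,0)$ is known along $y=0$, the holomorphic extension of the 1-form \eqref{F0mcform} supplies a holomorphic potential whose pointwise Iwasawa factorization is trivial along the real axis, so the resulting DPW solution takes precisely the prescribed boundary values. Uniqueness of the CMC solution to the geometric Cauchy problem is already established in \cite{bjorling}, so the task reduces to identifying the scalar functions entering $U_\mathfrak{p}$ and $U_\mathfrak{k}-\bar U_\mathfrak{k}^t$ along the curve in terms of $\mu$, $\kappa_g$, $\kappa_n$.

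First I would choose coordinates conformal with respect to the CMC second fundamental form, such that the prescribed curve is $\{y=0\}$ and the conformal factor $\nu$ satisfies $\nu(x,0)=1$, so that $x$ is arc-length along $\gamma$. I would then select a frame $F$ with $N = \Ad_F e_3$ and $g_x = \nu \Ad_F e_1$, so along the curve $g_y = \Ad_F e_2$. Feeding this ansatz into \eqref{CMCeqn} and its conjugate yields, after matching $\mathfrak{p}$-components, the expression
\[
U_\mathfrak{p}\big|_{y=0} = \tfrac12(a + b\,i)\,e_1 + \tfrac12(-b-1 + a\,i)\,e_2
\]
for two real-valued functions $a(x)$, $b(x)$ along the curve. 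Writing $U_\mathfrak{k}-\bar U_\mathfrak{k}^t = c\, e_3$ introduces the remaining unknown real function.

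Next I would pin down $a$, $b$, $c$ by differentiating the two defining frame identities along the real axis. Differentiating $g_x = \Ad_F e_1$ at $y=0$ produces
\[
g_{xx} = \Ad_F[U_\mathfrak{k}-\bar U_\mathfrak{k}^t + U_\mathfrak{p}-\bar U_\mathfrak{p}^t,\, e_1] = c\,\Ad_F e_2 + (b+1)\,\Ad_F e_3,
\]
which, compared with the decomposition $g_{xx} = \kappa_g \Ad_F e_2 + \kappa_n \Ad_F e_3$ into geodesic and normal curvatures, gives $c = \kappa_g$ and $b = \kappa_n - 1$. Differentiating $N = \Ad_F e_3$ and using $a = \langle N_x,\, g_x \times N\rangle = \mu$ identifies $a = \mu$. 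The straightforward check of these brackets in $\mathfrak{su}(2)$ is routine; the main care point is tracking signs and the factor arising from the chosen inner product $\langle X,Y\rangle = -2\,\trace(XY)$.

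Finally I would substitute the resulting boundary values of $U_\mathfrak{p}$, $U_\mathfrak{k}-\bar U_\mathfrak{k}^t$ and $-\bar U_\mathfrak{p}^t$ into \eqref{F0mcform}, take the holomorphic extension in $z = x+iy$, and read off the stated formula for $\hat\eta$. Uniqueness follows, as in \cite{bjorling}, from the fact that the boundary potential reconstructs $\hat F_0$ via a trivial Iwasawa splitting on $\{y=0\}$, forcing any DPW solution with the same geometric Cauchy data to have the same extended frame along the curve and hence globally. The most delicate step I anticipate is not any single calculation, but making sure the gauge freedom in choosing $F$ is fixed in a way compatible with the DPW twisting convention, so that the resulting potential really is a holomorphic potential in $\Lambda^{-1,\infty}_\sigma \mathfrak{sl}(2,\C)$; once this is verified, the coefficients above assemble into the claimed expression for $\hat\eta$.
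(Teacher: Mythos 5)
Your proposal follows essentially the same route as the paper's Appendix~\ref{appendix}: frame the curve by $N=\Ad_F e_3$, $g_x=\nu\Ad_F e_1$ with $\nu(x,0)=1$, extract $U_\mathfrak{p}=\tfrac12(a+bi)e_1+\tfrac12(-b-1+ai)e_2$ from the CMC defining equation \eqref{CMCeqn}, identify $c=\kappa_g$, $b=\kappa_n-1$, $a=\langle N_x, g_x\times N\rangle=\mu$ by differentiating the frame identities, and substitute into the boundary potential \eqref{F0mcform}, with uniqueness as in \cite{bjorling}. The only slip is terminological: the coordinates are conformal for the \emph{induced} metric of the CMC surface (equivalently the second fundamental form of the parallel spherical surface), not the CMC surface's second fundamental form, but this does not affect any of the computations.
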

As with spherical surfaces, the case that the curve is a \emph{geodesic} is of course given by $N = {\bf n}$,
and then $\mu$, $\kappa_n$ and $\kappa_g$ are replace respectively by $\tau$, $\kappa$ and $0$ in the formula
for the boundary potential.    We remark that there is no \emph{singular} version of the geometric Cauchy problem
for CMC surfaces because these surfaces do not have any non-degenerate singularities. The only singularities that 
can occur are branch points, i.e., isolated points where $\dd f = 0$ (see, e.g. \cite{DH97}).

	\begin{figure}[ht]
\centering
$
\begin{array}{ccc}
\includegraphics[height=30mm]{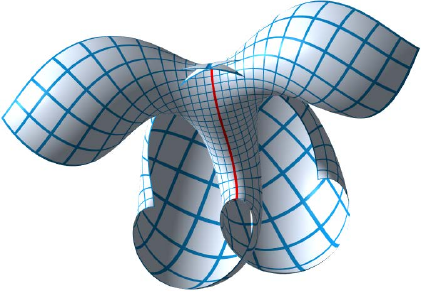}  &
\includegraphics[height=30mm]{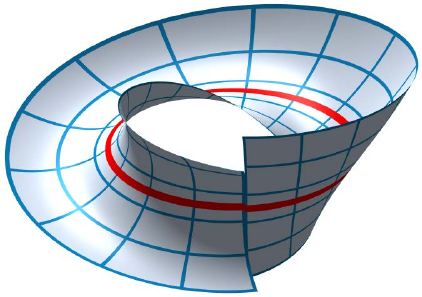}  &
\includegraphics[height=30mm]{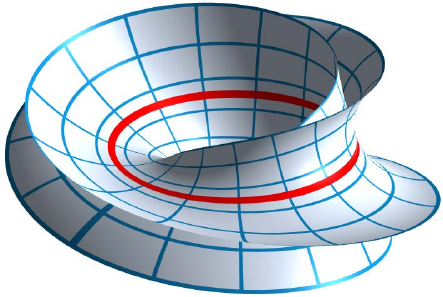}  \\
(\kappa_n(s),\kappa_g(s), \mu(s)) =(s,0,0) &  
\hbox{$x$ interval }  [0,2\pi]  &
\hbox{$x$ interval }  [0,4\pi]  
\end{array}
$
\caption{CMC surfaces in Example \ref{cmcexample1}.}
\label{figure6}
\end{figure}

\begin{example} \label{cmcexample1}
Figure \ref{figure6} shows, on the left, the unique solution corresponding to the inflectional geodesic curve with 
$\kappa(s)=s$ and $\tau(s)=0$. Unlike the spherical case (Example \ref{sphericalexample2}), the surface is regular
around the curve.   In the two images on the right, the solution for the same geometric Cauchy data as in the non-orientable
spherical surface of Example \ref{nonorientableexample}, is shown.  All non-minimal CMC surfaces are orientable, and
so this surface cannot (and does not) close up until the circle has been traversed twice.
\end{example}

\end{appendices}

\providecommand{\bysame}{\leavevmode\hbox to3em{\hrulefill}\thinspace}
\providecommand{\MR}{\relax\ifhmode\unskip\space\fi MR }
\providecommand{\MRhref}[2]{%
  \href{http://www.ams.org/mathscinet-getitem?mr=#1}{#2}
}
\providecommand{\href}[2]{#2}

\end{document}